\newtheorem{theorem}{Theorem}[section]
\newtheorem{corollary}[theorem]{Corollary}
\newtheorem{lemma}[theorem]{Lemma}
\newtheorem{proposition}[theorem]{Proposition}
\theoremstyle{definition}
\newtheorem{definition}[theorem]{Definition}
\theoremstyle{remark}
\newtheorem{remark}[theorem]{Remark}
\newtheorem{example}[theorem]{Example}
\newtheorem*{claim}{Claim}
\numberwithin{equation}{section}
\newcommand{\op}{\operatorname}
\newcommand{\brak}[1]{\langle #1 \rangle}
\newcommand{\bH}{\mathbb{H}}
\newcommand{\N}{\mathbb{N}}
\newcommand{\R}{\mathbb{R}}
\newcommand{\Z}{\mathbb{Z}}
\newcommand{\cE}{\mathcal{E}}
\newcommand{\cH}{\mathcal{H}}
\newcommand{\cL}{\mathcal{L}}
\newcommand{\cU}{\mathcal{U}}
\newcommand{\cX}{\mathcal{X}}
\newcommand{\sN}{\mathscr{N}}
\newcommand{\fg}{\mathfrak{g}}
\newcommand{\rk}{\op{rk}}
\newcommand{\ad}{\op{ad}}
\newcommand{\Ow}{\op{O}_{w}}
\begin{document}

\title{Privileged Coordinates and Nilpotent Approximation for Carnot Manifolds, II. Carnot Coordinates}

\subjclass[2000]{Primary}

\author{Woocheol Choi}
\address{Department of Mathematics Education, Incheon National University, Incheon, South Korea}
\email{choiwc@inu.ac.kr}

 \author{Rapha\"el Ponge}
\address{Department of Mathematical Sciences, Seoul National University, Seoul, South Korea}
 \email{ponge.snu@gmail.com}

 \thanks{WC was partially supported by POSCO TJ Park Foundation. RP\ was partially supported by Research Resettlement Fund and Foreign Faculty Research Fund of Seoul National University, and  Basic Research grants 2013R1A1A2008802 and 2016R1D1A1B01015971 of National Research Foundation of Korea.}

\begin{abstract}
This paper is a sequel of~\cite{CP:Privileged} and deals with privileged coordinates and nilpotent approximation of Carnot manifolds. By a Carnot manifold it is meant a manifold equipped with a  filtration by subbundles of the tangent bundle which is 
 compatible with the Lie bracket of vector fields. In this paper, we single out a special class of privileged coordinates in which the nilpotent approximation at a given point of a Carnot manifold is given by its tangent group.  We call these coordinates Carnot coordinates. Examples of Carnot coordinates include Darboux coordinates on contact manifolds and the canonical coordinates of the first kind of Goodman and Rothschild-Stein. By converting the privileged coordinate of Bella\"iche into Carnot coordinates we obtain an effective construction of Carnot coordinates, which we call $\varepsilon$-Carnot coordinates. They form the building block of all systems of Carnot coordinates. On a graded nilpotent Lie group they are given by the group law of the group. For general Carnot manifolds, they depend smoothly on the base point. Moreover, in Carnot coordinates at a given point, they are osculated in a very precise manner by the group law of the tangent group at the point. 
\end{abstract}

\maketitle

 \section{Introduction}
 This paper is part of a series of two papers on privileged coordinates and nilpotent approximation of Carnot manifolds. By a Carnot manifold we mean a manifold $M$ together with a filtration of subbundles,
\begin{equation}
   H_{1}\subset H_{2}\subset \cdots \subset H_{r}=TM,
   \label{eq:Intro.Carnot-filtration}
\end{equation}which is compatible with the Lie bracket of vector fields. 
We refer to~\cite{CP:Privileged}, and the references therein, for various examples of Carnot manifolds. 
It is a general understanding that (graded) nilpotent Lie group are models for Carnot manifolds. 
From an algebraic perspective, any filtration~(\ref{eq:Intro.Carnot-filtration}) gives rise to a graded vector bundle $\fg M:=\fg_1M\oplus \cdots \oplus \fg_r M$, where $\fg_wM=H_{w}/H_{w-1}$. As a vector bundle $\fg M$ is (locally) isomorphic to the tangent bundle $TM$. Moreover, as observed by Tanaka~\cite{Ta:JMKU70}, the Lie bracket of vector fields induces on each fiber $\fg M(a)$, $a\in M$, a Lie algebra bracket which turns $\fg M(a)$ into a graded nilpotent Lie algebra. Equipping it with its Dynkin product we obtain a graded nilpotent group $GM(a)$, which is called the tangent group at $a$ (see Section~\ref{sec:tangent-group} for a review of this construction). 

There is an alternative construction of nilpotent graded groups associated with $(M,H)$.  This construction originated from the work of Folland-Sein~\cite{FS:CMPAM74} and  Rothschild-Stein~\cite{RS:ActaMath76} on hypoelliptic PDEs. In this context, it is natural to weight the differentiation by a vector field according to which sub-bundle $H_j$ of the filtration~(\ref{eq:Intro.Carnot-filtration}) that vector field lies. For instance, as $[H_1,H_1]\subset H_2$ we would like to regard directions in $H_2\setminus H_1$ as having order~$2$. More generally, directions in $H_w\setminus H_{w-1}$ have weight~$w$. Note that this notion of weight is consistent with the grading of $\fg M$ described above. 

In local coordinates centered at a given point $a\in M$ this gives rise to a one-parameter of anisotropic dilations $\delta_t$, $t\in \R$. Rescaling vector fields by means of these dilations and letting $t\rightarrow 0$ we obtain ``anisotropic asymptotic expansions" whose leading terms form a graded nilpotent Lie algebra of vector fields $\fg^{(a)}$. As it turns out, the algebraic structure of $\fg^{(a)}$ heavily depends on the choice of the coordinates. However, there is a special class of coordinates, called privileged coordinates, where the grading of $\fg^{(a)}$ is compatible with the weight. In addition, in these coordinates the graded nilpotent Lie algebra $\fg^{(a)}$ is isomorphic to $\fg M(a)$. The Lie algebra $\fg^{(a)}$ is actually the Lie algebra of left-invariant vector fields on a graded nilpotent Lie group $G^{(a)}$. This group gives rise to the so-called nilpotent approximation of $(M,H)$ at $a$. We refer to Section~\ref{sec:Privileged} and Section~\ref{sec:Nilpotent-approximation}, and the references therein, for more details on privileged coordinates and the nilpotent approximation of Carnot manifolds. 

At the conceptual level and for the sake of applications, it is desirable to understand better the relationship between the tangent group and the nilpotent approximation. The aim of this paper is to single out a special class of privileged coordinates, called Carnot coordinates, for which the nilpotent approximation is naturally given by the tangent group. These coordinates are an important ingredient in~\cite{CP:Groupoid} for obtaining a generalization of Pansu derivative to maps between general Carnot manifolds and for constructing an analogue for Carnot manifolds of Connes' tangent groupoid. The existence of such a groupoid was conjectured by Bella\"iche~\cite{Be:Tangent}. 

The tangent groupoid of a Carnot manifold provides us with definitive evidence that the tangent groups described above are the relevant osculating objects for Carnot manifolds. In particular, this gives a conceptual explanation for the occurrence of the group structure. This also gives an osculation result in a differentiable fashion. On Carnot-Caratheodory manifolds privileged coordinates allows us to have an osculation result in the metric sense (see~\cite{Be:Tangent, Mi:JDG85, NSW:AM85}). However, this approach does not account for the group structure. Therefore, by leading us to the Carnot groupoid Carnot coordinates allows us to get a more precise osculation result. 

The nilpotent approximation $G^{(a)}$ at a given point $a\in M$ arises from anisotropic asymptotic expansions of vector fields in privileged coordinates. This depends on the choice of the privileged coordinates. We refer to~\cite{CP:Privileged} and Section~\ref{sec:Nilpotent-approximation} for an algebraic characterization of the nilpotent Lie groups that arises as nilpotent approximation at $a$. It can be shown that suitable tangent frames (called $H$-frames; see Definition~\ref{def:Carnot-mflds.H-frames}) naturally identify the tangent group $GM(a)$ with a graded nilpotent Lie algebra that satisfies this algebraic characterization (see Proposition~\ref{prop:G(a)-sNX(a)}). Therefore, it gives rise to the nilpotent approximation at $a$ in suitable systems of privileged coordinates, which we call Carnot coordinates. For instance, Darboux coordinates on a contact manifold are Carnot coordinates (Proposition~\ref{prop:Carnot-coor.Darboux-coord}). 

By using the results of~\cite{CP:Privileged} we can show that any system of privileged coordinates can be converted into a system of Carnot coordinates by means of a unique homogeneous change of variables (Theorem~\ref{thm:Carnot-coord.w-hom-privileged-to-Carnot}). Here homogeneity is meant with respect to the anisotropic dilations associated with the filtration~(\ref{eq:Intro.Carnot-filtration}). 
We also characterize the changes of coordinates that transform a given system of Carnot coordinates into another system of Carnot coordinates (Proposition~\ref{prop:char-Carnot-coord.phi-x-Ow}). 
As an application of this characterization, we can show that the canonical coordinates of the first kind of Goodman~\cite{Go:LNM76} and Rothschild-Stein~\cite{RS:ActaMath76} are Carnot coordinates (Proposition~\ref{prop:Canonical-coord.1st-kind}). 
In contrast, when the filtration~(\ref{eq:Intro.Carnot-filtration}) has length $r\geq 2$, the canonical coordinates of the second kind of Bianchini-Stefani~\cite{BS:SIAMJCO90} and Hermes~\cite{He:SIAMR91} are never Carnot coordinates (Proposition~\ref{prop:Canonical-coord.2nd-kind}). 
 
In~\cite{Be:Tangent} Bella\"iche gave an effective construction of privileged coordinates on Carnot-Carath\'eodory manifolds by means of a suitable polynomial change of coordinates. This was extended to Carnot manifolds in~\cite{CP:Privileged}. 
By converting these privileged coordinates into Carnot coordinates by means of the homogeneous change of variables mentioned above we then obtain an effective construction of Carnot coordinates (Proposition~\ref{prop:Carnot-coord.carnot-cor}). We call these coordinates $\varepsilon$-coordinates. 
Combining this construction  with the characterization result mentioned above then allows us to get all the systems of Carnot coordinates at a given point (Corollary~\ref{cor:vareps-Carnot.all-Carnot-coord}). In particular, this shows that the $\varepsilon$-coordinates form the building block of all systems of Carnot coordinates at a given point. 
 
The $\varepsilon$-Carnot coordinates at a given point $a$ are obtained by means of a polynomial change of variables $x\rightarrow \varepsilon_a(x)$ associated with the datum of an $H$-frame near $a$. We can show that the polynomial map $\varepsilon_a(x)$ is the only one of its form that produces Carnot coordinates (Theorem~\ref{thm:Carnot-coord.unicity-normal2}). 
One application of this result is the description of the $\varepsilon$-Carnot coordinates on a graded nilpotent Lie group in terms of the group law (see Proposition~\ref{prop-static}). 
Furthermore, the coefficients of the  polynomial map $\varepsilon_a(x)$ and its inverse map $\varepsilon_a^{-1}(a)$  are universal polynomials in the partial derivatives at $a$ of the coefficients of the vector fields of the $H$-frame (see Proposition~\ref{prop:Carnot-prop.polynomial-X} and Proposition~\ref{polynomial-XI} for the precise statements). As an immediate consequence, these maps both depend smoothly on the base point $a$ (Corollary~\ref{prop-carnot-sm}). 

Finally, we show that, at least asymptotically, there is a close relationship between $\varepsilon$-Carnot coordinates and the group laws of the tangent groups. More precisely, in Carnot coordinates at a point $a$, the maps $(x,y)\rightarrow \varepsilon_x(y)$ and   $(x,y)\rightarrow \varepsilon_x^{-1}(y)$ are osculated in very precise manner by the group law maps $(x,y)\rightarrow x^{-1}\cdot y$ and $(x,y)\rightarrow x\cdot y$, where $\cdot$ is the group law of $GM(a)$ under the identification mentioned above (see Proposition~\ref{prop-com}). Obtaining these osculation results was an important impetus for the research of this paper and its prequel~\cite{CP:Privileged}. These results are important ingredients in the construction of the tangent groupoid of a manifold in~\cite{CP:Groupoid} and the construction of a full symbolic calculus for hypoelliptic pseudodifferential operators on Carnot manifolds in~\cite{CP:Carnot-calculus}.

 This paper is organized as follows. In Section~\ref{sec:Carnot-manifolds}, we present the main definitions regarding Carnot manifolds, including the  
construction of the tangent group bundle of a Carnot manifold. In Section~\ref{sec:Privileged}, we review some important facts on privileged coordinates. 
In Section~\ref{sec:Nilpotent-approximation}, we summarize the main results of~\cite{CP:Privileged} on the nilpotent approximation of Carnot manifolds. In Section~\ref{sec:Carnot-coord.existence}, we introduce Carnot coordinates, establish their properties, and exhibit a few examples. In Section~\ref{sec:Canonical-coordinate}, we look at the canonical coordinates of the 1st kind of~\cite{Go:LNM76, RS:ActaMath76} and canonical coordinates of the 2nd kind of~\cite{BS:SIAMJCO90, He:SIAMR91}. In Section~\ref{sec:vareps-Carnot-coord}, we construct and characterize the $\varepsilon$-Carnot coordinates and look at some examples, including $\varepsilon$-Carnot coordinates on graded nilpotent Lie groups. In Section~\ref{sec:H-frame-dependence}, we look at the dependence of the map $\varepsilon_a$ and its inverse map with respect to the $H$-frame they are associated with, and we show that in Carnot coordinates they are osculated by the group law of the tangent group. 
 
\subsection*{Acknowledgements}
The authors wish to thank Andrei Agrachev, Davide Barilari, Enrico Le Donne, and Fr\'ed\'eric Jean for useful discussions related to
the subject matter of this paper. They also thank anonymous referees whose insightful comments help improving the presentation of the paper. In addition, they would like to thank Henri Poincar\'e Institute (Paris, France), McGill University 
(Montr\'eal, Canada) and University of California at Berkeley (Berkeley, USA) for their hospitality during the 
preparation of this paper.  

\section{Carnot Manifolds}\label{sec:Carnot-manifolds}
In this section, we briefly review the main facts about Carnot manifolds. A more complete account can be found in~\cite{CP:Privileged} and the references therein. 

\subsection{Carnot Groups and Graded nilpotent Lie groups} 
In what follows, by the Lie algebra of a Lie group we shall mean the tangent space at the unit element equipped with Lie bracket induced by the Lie bracket of left-invariant vector fields. 

\begin{definition} \label{def:Carnot.Carnot-algebra}
A \emph{step $r$ nilpotent graded Lie algebra} is the data of a real Lie algebra $(\fg,[\cdot, \cdot])$ and a grading $  \fg=\fg_{1}\oplus \fg_{2}\oplus \cdots \oplus \fg_{r}$, which is compatible with the Lie bracket, i.e., 
\begin{equation}
        [\fg_{w},\fg_{w'}]\subset \fg_{w+w'} \ 
        \text{for $w+w'\leq r$} \quad \text{and} \quad [\fg_{w},\fg_{w'}]=\{0\}\ 
        \text{for $w+w'> r$}.
         \label{eq:Carnot.grading-bracket}
\end{equation}
We further say that $\fg$ is a \emph{Carnot algebra} when $\fg_{w+1}=[\fg_{1},\fg_{w}]$ for $w=1,\ldots, r-1$. 
A \emph{graded nilpotent Lie group} (resp., \emph{Carnot group}) is a connected  and simply connected nilpotent real Lie group whose Lie algebra is a  graded nilpotent  Lie algebra (resp., Carnot algebra). 
\end{definition}

Let $G$ be a step $r$ graded nilpotent Lie group with unit $e$. Its Lie algebra $\fg=T_eG$ comes equipped with a grading $  \fg=\fg_{1}\oplus \fg_{2}\oplus \cdots \oplus \fg_{r}$, which is compatible with its Lie algebra bracket. This grading  gives rise to a family of anisotropic dilations $\xi \rightarrow t\cdot \xi$, $t \in \R\setminus 0$, which are automorphisms of $\fg$ given by
\begin{equation}
 t\cdot (\xi_1+\xi_2 +\cdots + \xi_r) = t\xi_1+ t^2\xi_2 +\cdots + t^r \xi_r, \qquad \xi_j \in \fg_j.  
 \label{eq:Carnot.dilations}
\end{equation}
In addition, $\fg$ is canonically isomorphic to the Lie algebra of left-invariant vector fields on $G$: to any $\xi \in \fg$ corresponds the unique left-invariant vector field $X_\xi$ on $G$ such that $X_\xi(e)=\xi$. The flow $\exp(tX_\xi)$ exists for all $t\in \R$, 
and so we have a globally defined exponential map $\exp:\fg \rightarrow G$ which is a diffeomorphism given by
\begin{equation*}
\exp(\xi)=\exp(X_\xi), \qquad \text{where $\exp(X_\xi):=\exp(tX_\xi)(e)_{|t=1}$}. 
\end{equation*}

For $\xi \in \fg$, let $\ad_\xi: \fg\rightarrow \fg$ be the adjoint endomorphism of $\xi$, i.e., $\ad_\xi \eta =[\xi,\eta]$ for all $\eta \in \fg$. This is a nilpotent endomorphism.  
By  the Baker-Campbell-Hausdorff formula we have 
\begin{equation}
\exp(\xi) \exp(\eta)= \exp(\xi \cdot \eta) \qquad \text{for all $\xi,\eta \in \fg$},
\label{eq:Carnot.BCH-Formula}
\end{equation}where $\xi \cdot \eta$ is given by the Dynkin product, 
\begin{align}
 \xi \cdot \eta & = \sum_{n\geq 1} \frac{(-1)^{n+1}}{n} \sum_{\substack{\alpha, \beta \in \N_0^n\\ \alpha_j+\beta_j\geq 1}} 
 \frac{(|\alpha|+|\beta|)^{-1}}{\alpha!\beta!} (\ad_\xi)^{\alpha_1} (\ad_\eta)^{\beta_1} \cdots (\ad_\xi)^{\alpha_n} (\ad_\eta)^{\beta_n-1}\eta \nonumber \\
 & = \xi +\eta + \frac{1}{2}[\xi,\eta] + \frac{1}{12} \left( [\xi,[\xi,\eta]]+  [\eta,[\eta,\xi]]\right) - \frac{1}{24}[\eta,[\xi,[\xi,\eta]]] + \cdots .
 \label{eq:Carnot.Dynkin-product}
\end{align}
The above summations are finite, since all the iterated brackets of length~$\geq r+1$ are zero. Any Lie algebra automorphism of $\fg$ then lifts to a Lie group isomorphism of $G$. In particular, the dilations~(\ref{eq:Carnot.dilations}) give rise to Lie group isomorphisms $\delta_t:G\rightarrow G$, $t\in \R\setminus 0$.

Conversely, if $\fg$ is a graded nilpotent Lie algebra, then~(\ref{eq:Carnot.Dynkin-product}) defines a product on $\fg$. This turns $\fg$ into a Lie group with unit $0$. Under the identification $\fg\simeq T_0 \fg$, the corresponding Lie algebra is naturally identified with $\fg$, and so we obtain a graded nilpotent Lie group. Under this identification 
the exponential map becomes the identity  map. Moreover, inversion with respect to the group law~(\ref{eq:Carnot.Dynkin-product}) is given by
\begin{equation}
\xi^{-1}=-\xi \qquad \text{for all $\xi \in \fg$}. 
\label{eq:GM.inverse}
\end{equation}

\subsection{Carnot manifolds} 
In what follows, given distributions $H_{j}\subset TM$, $j=1,2$,  on a manifold $M$, we shall denote by $[H_{1},H_{2}]$ 
the distribution generated by the Lie brackets of their sections, i.e., 
\begin{equation*}
  [H_{1},H_{2}]=\bigsqcup_{x\in M}\biggl\{ [X_{1},X_{2}](x); \ X_{j}\in C^{\infty}(M,H_{j}), j=1,2\biggr\}.
\end{equation*}

\begin{definition}
    A \emph{Carnot manifold} is a pair $(M,H)$, where $M$ is a manifold and $H=(H_{1},\ldots,H_{r})$ is a finite filtration of 
    subbundles, 
    \begin{equation}
       H_{1}\subset H_{2}\subset \cdots \subset H_{r}=TM,  
        \label{eq:Carnot.Carnot-filtration}
    \end{equation}which is compatible with the Lie bracket of vector fields, i.e., 
    \begin{equation}
        [H_{w},H_{w'}]\subset H_{w+w'}\qquad 
        \text{for $w+w'\leq r$}.
        \label{eq:Carnot-mflds.bracket-condition}
    \end{equation}
    The number $r$ is called the \emph{step} of the Carnot manifold $(M,H)$. The sequence $(\rk H_{1},\ldots, \rk H_{r})$ is called its \emph{type}.  
\end{definition}

\begin{remark}
We refer to~\cite{CP:Privileged}, and the references therein, for various examples of Carnot manifolds. 
Many of those examples are equiregular Carnot-Carath\'eodory manifolds in the sense of~\cite{Gr:CC}. In this case the Carnot filtration~(\ref{eq:Intro.Carnot-filtration}) arises from the iterated  Lie bracket of sections of $H_1$. However,  even for studying equiregular (and even non-regular) Carnot-Carath\'eodory structures we may be naturally led to consider non bracket-generated Carnot filtrations (see~\cite{CP:Privileged}). 
\end{remark}

Let $(M^{n},H)$ be an $n$-dimensional Carnot manifold of step $r$, so that $H=(H_{1},\ldots,H_{r})$, 
where the subbundles $H_{j}$ satisfy~(\ref{eq:Carnot.Carnot-filtration}).  

\begin{definition}\label{def:Carnot.weight-sequence}
    The \emph{weight sequence} of a Carnot manifold $(M,H)$ is the sequence $w=(w_{1},\ldots,w_{n})$ defined by
    \begin{equation}
    w_{j}=\min\{w\in \{1,\ldots,r\}; j\leq \rk H_{w}\}.
    \label{eq:Carnot.weight}
\end{equation}
\end{definition}

\begin{remark}
    Two Carnot manifolds have same type if and only if they have same weight sequence. 
\end{remark}

Throughout this paper we will make use of the following type of tangent frames.  

\begin{definition}\label{def:Carnot-mflds.H-frames}  
An \emph{$H$-frame} over an open $U\subset M$ is a tangent frame $(X_{1},\ldots,X_{n})$ over $U$ which is compatible with the 
filtration $(H_{1},\ldots.,H_{r})$ in the sense that, for $w=1,\ldots,r$, the vector fields $X_{j}$, $w_{j}= w$, are sections of $H_{w}$.
\end{definition}

\begin{remark}\label{rmk:Carnot-mfld.local-frames}
    If $(X_{1},\ldots,X_{n})$ is an $H$-frame near a point $a\in M$, then, for $w=1,\ldots,r$, the family $\{X_{j}; \ w_{j}\leq w\}$ is a local frame 
of $H_{w}$ near the point $a$. Note this implies that $X_{j}$ is a section of $H_{w_{j}}\setminus H_{w_{j}-1}$ when $w_{j}\geq 2$. 
\end{remark}

\begin{remark}\label{rmk:Carnot-mfld.brackets-H-frame}
    Let $(X_{1},\ldots,X_{n})$ be an $H$-frame near a point $a\in M$. As explained in~\cite{CP:Privileged},  the condition~(\ref{eq:Carnot-mflds.bracket-condition}) implies that, near $x=a$, there are smooth functions $L_{ij}^{k}(x)$, $w_k\leq w_i+w_j$,  such that, for  $i,j=1,\ldots, n$, we have 
    \begin{equation}
        [X_{i},X_{j}](x)=\sum_{w_{k}\leq w_{i}+w_{j}}L_{ij}^{k}(x)X_{k}(x) \qquad \textup{near $x=a$}. 
        \label{eq:Carnot-mfld.brackets-H-frame}
    \end{equation}   
\end{remark}

\subsection{The Tangent Group Bundle of a Carnot Manifold}\label{sec:tangent-group}
The Carnot filtration $H=(H_{1},\ldots,H_{r})$ gives rise to a graded vector bundle defined as follows. For $w=1,\ldots,r$, set 
$\fg_{w}M=H_{w}/H_{w-1}$ (with the convention that $H_{0}=\{0\}$), and define
\begin{equation}\label{eq-grading}
    \fg M := \fg_{1}M\oplus \cdots \oplus \fg_{r}M.
\end{equation}
In what follows, we denote by $\times_M$ the fiber products of vector bundles over $M$. 

\begin{lemma}[\cite{Ta:JMKU70}]\label{lem-dep}
 The Lie bracket of vector fields induces smooth bilinear bundle maps, 
\begin{equation*}
    \cL_{w,w'}:\fg_{w}M\times_M \fg_{w'}M\longrightarrow \fg_{w+w'}M, \qquad w+w'\leq r. 
\end{equation*}More precisely, given any $a\in M$ and any 
 section $X$ (resp., $Y$) of $H_{w}$ (resp., $H_{w'}$) near $a$, if we let $\xi(a)$ (resp., $\eta(a)$)  be the class of $X(a)$ (resp., $Y(a)$) in $\fg_{w}M(a)$ 
 (resp., $\fg_{w'}M(a)$), then we have
\begin{equation}
 \cL_{w,w'}\left(\xi(a),\eta(a)\right)= \textup{class of  $[X,Y](a)$ in $\fg_{w+w'}M(a)$}. 
  \label{eq:Tangent.Lie-backet}
\end{equation}
 \end{lemma}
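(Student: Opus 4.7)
The plan is to verify three things: (a) the formula~(\ref{eq:Tangent.Lie-backet}) yields a well-defined element of $\fg_{w+w'}M(a)$ independent of the chosen local sections $X,Y$ representing $\xi(a),\eta(a)$; (b) the resulting assignment is bilinear on each fiber; and (c) the map $\cL_{w,w'}$ varies smoothly with $a$. Bilinearity is immediate from the $\R$-bilinearity of the Lie bracket of vector fields together with the linearity of the quotient map $H_{w+w'}(a)\to \fg_{w+w'}M(a)$, so only (a) and (c) require actual work.

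For well-definedness, by symmetry it suffices to fix a section $Y$ of $H_{w'}$ near $a$ and show that if $W$ is any local section of $H_w$ with $W(a)\in H_{w-1}(a)$, then $[W,Y](a)\in H_{w+w'-1}(a)$. I would choose any local section $\tilde W$ of $H_{w-1}$ with $\tilde W(a)=W(a)$ and set $Z:=W-\tilde W$, which is then a section of $H_w$ with $Z(a)=0$. The bracket condition~(\ref{eq:Carnot-mflds.bracket-condition}) immediately gives $[\tilde W,Y](a)\in H_{w-1+w'}(a)=H_{w+w'-1}(a)$. For $[Z,Y](a)$, expand $Z$ in a local $H$-frame $(X_{1},\ldots,X_{n})$ near $a$: since $Z$ is a section of $H_w$, only indices with $w_i\leq w$ contribute, so $Z=\sum_{w_i\leq w}g_iX_i$ with $g_i(a)=0$. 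Applying $[gX,Y]=g[X,Y]-Y(g)X$ and evaluating at $a$ then yields $[Z,Y](a)=-\sum_{w_i\leq w}(Yg_i)(a)X_i(a)\in H_w(a)\subset H_{w+w'-1}(a)$. Combining both pieces gives $[W,Y](a)\in H_{w+w'-1}(a)$, so that the class of $[X,Y](a)$ in $\fg_{w+w'}M(a)$ depends only on $\xi(a)$ and $\eta(a)$.

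For smoothness, I would pick any local $H$-frame $(X_1,\ldots,X_n)$ near $a$ and let $\bar X_i(x)$ denote the class of $X_i(x)$ in $\fg_{w_i}M(x)$. Then $\{\bar X_i;\ w_i=w\}$, $\{\bar X_j;\ w_j=w'\}$, and $\{\bar X_k;\ w_k=w+w'\}$ are smooth local frames for $\fg_wM$, $\fg_{w'}M$, and $\fg_{w+w'}M$ respectively. By the structure formula~(\ref{eq:Carnot-mfld.brackets-H-frame}), for $w_i=w$ and $w_j=w'$ the class of $[X_i,X_j](x)$ in $\fg_{w+w'}M(x)$ is exactly $\sum_{w_k=w+w'}L_{ij}^k(x)\bar X_k(x)$. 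Extending by the bilinearity already established, $\cL_{w,w'}$ is represented in these local trivializations by a matrix of the smooth functions $L_{ij}^k$, which proves smoothness of the bundle map.

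The main obstacle is step~(a): one cannot simply write $W$ as a section of $H_{w-1}$ just because $W(a)\in H_{w-1}(a)$, since the inclusion $H_{w-1}\subset H_w$ need not split globally. The trick is the splitting $W=\tilde W+Z$ above, which peels off the lower-order part while leaving a residual section of $H_w$ that vanishes at $a$; the vanishing is exactly what activates the elementary identity $[gX,Y](a)=-(Yg)(a)X(a)$, and this is what forces the residual correction to land in $H_w(a)$, safely inside $H_{w+w'-1}(a)$ since $w'\geq 1$.
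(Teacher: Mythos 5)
Your proof is correct and follows essentially the route the paper indicates: the well-definedness step is exactly the observation of Remark~\ref{rmk:Tangent-group.definition-cL} (split off a section of $H_{w-1}$ matching the value at $a$, then use that a bracket whose remaining factor vanishes at $a$ lands in $H_{w}(a)\subset H_{w+w'-1}(a)$, together with the compatibility condition~(\ref{eq:Carnot-mflds.bracket-condition})), and your smoothness argument via the structure functions $L_{ij}^{k}$ of an $H$-frame is precisely the content of~(\ref{eq:Carnot-mfld.brackets-H-frame}) and Remark~\ref{rmk:Tangent-group.frame-fgM}. No gaps to report.
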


\begin{remark}\label{rmk:Tangent-group.definition-cL}
 The proof of Lemma~\ref{lem-dep} is based on the following observation. Given $a\in M$ and local sections $X_j$, $j=1,2$, of $H_{w_j}$ near $x=a$, the class of $[X_1,X_2](a)$ modulo $H_{w_1}(a)+H_{w_2}(a)$ only depends on the values of $X_1(a)$ and $X_2(a)$. Therefore, if $w_i+w_j\leq r$, then the condition~(\ref{eq:Carnot.Carnot-filtration}) ensures us that the class of $[X_1,X_2](a)$ in $\fg_{w_1+w_2}M(a)$ only depends on $X_1(a)$ and $X_2(a)$. 
\end{remark}

 \begin{remark}\label{rmk:Tangent-group.frame-fgM}
    Let $(X_{1},\ldots,X_{n})$ be an $H$-frame over an open set $U\subset M$. For $j=1,\ldots,n$ and $x\in U$, let  
     $\xi_{j}(x)$ be the class of $X_{j}(x)$ in $\fg_{w_{j}}M(x)$. Then, as mentioned in~\cite{CP:Privileged}, for $w=1,\ldots,r$, 
     the family $\{\xi_{j}; w_{j}=w\}$ is a smooth frame of $\fg_{w}M$ over $U$, and so $(\xi_{1},\ldots,\xi_{n})$ is a smooth
     frame of $\fg M$ over $U$ which is compatible with the grading~(\ref{eq-grading}). We shall call such a frame a \emph{graded frame}. 
     In particular, for every $x\in U$, we obtain a \emph{graded basis} of $\fg M(x)$. 
       In addition, by using~(\ref{eq:Carnot-mfld.brackets-H-frame}) and~(\ref{eq:Tangent.Lie-backet}) 
     we get
     \begin{equation}
         \cL_{w_{i},w_{j}}(\xi_{i}(x),\xi_{j}(x))=\sum_{w_{k}=w_{i}+w_{j}}L_{ij}^{k}(x)\xi_{k}(x) \qquad 
         \text{for $w_{i}+w_{j}\leq r$}, 
         \label{eq:Tangent-Group.Levi-H-frame}
     \end{equation}
    where the coefficients $L_{ij}^k(x)$, $w_i+w_j=w_k$, are defined by~(\ref{eq:Carnot-mfld.brackets-H-frame}). 
 \end{remark}

\begin{definition}
The bilinear bundle map $[\cdot,\cdot]:\fg M\times_M  \fg M\rightarrow \fg M$ is defined as follows. For $a\in M$ and $(\xi,\eta) \in \fg_wM(a)\times \fg_{w'}M(a)$ we set
    \begin{equation}\label{eq-brak-a}
        [\xi,\eta]=\left\{
        \begin{array}{ll}
            \cL_{w,w'}(a)(\xi,\eta)& \text{if $w+w'\leq r$},  \smallskip \\
           0 &  \text{if $w+w'> r.$ }
        \end{array}\right.
    \end{equation}
 We then extend $[\cdot,\cdot]$ to all $\fg M\times_M  \fg M$ by bilinearity.   
\end{definition}

Lemma~\ref{lem-dep} ensures us that $[\cdot, \cdot]$ is a smooth bilinear bundle map. Furthermore, on each fiber  $\fg M(a)$, $a\in M$, it defines 
a Lie algebra bracket such that
       \begin{align}
                   \left[ \fg_{w}M, \fg_{w'}M \right]  \subset \fg_{w+w'}M   & \quad \text{if $w+w'\leq r$}, \\
                \left[\fg_{w}M,\fg_{w'}M\right] =\{0\}  &  \quad \text{if $w+w'>r $}.
          \label{eq:Carnot.Carnot-grading}
           \end{align}
Therefore, the Lie bracket is compatible with the grading~(\ref{eq-grading}), and so it turns $\fg M(a)$ into a (graded) nilpotent  Lie algebra of step $r$.  
 
\begin{proposition}[\cite{Ta:JMKU70}]
   $(\fg M, [\cdot, \cdot])$ is a smooth bundle of step $r$ graded nilpotent Lie algebras.
\end{proposition}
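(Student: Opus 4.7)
My plan is to establish four properties in turn: (i) $\fg M$ is a smooth vector bundle, (ii) $[\cdot,\cdot]$ is a smooth bilinear bundle map, (iii) on each fiber it is skew-symmetric and satisfies the Jacobi identity, and (iv) it respects the grading~(\ref{eq-grading}). Properties (i), (ii), and (iv) are essentially built into what precedes. Each $\fg_w M = H_w/H_{w-1}$ is the quotient of two constant-rank smooth subbundles of $TM$, hence is itself a smooth vector bundle, and $\fg M$ is their direct sum. The map $[\cdot,\cdot]$ is assembled in~(\ref{eq-brak-a}) from the smooth maps $\cL_{w,w'}$ of Lemma~\ref{lem-dep} and extended by bilinearity across the grading, so it is automatically smooth; the grading compatibility $[\fg_w M,\fg_{w'}M]\subset \fg_{w+w'}M$ (with zero when $w+w'>r$) is how we defined it. What remains is to verify that the restriction to each fiber is a Lie bracket.

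For skew-symmetry I would fix $a\in M$ and $\xi\in \fg_w M(a)$, $\eta\in \fg_{w'}M(a)$ with $w+w'\leq r$, choose local sections $X,Y$ of $H_w,H_{w'}$ representing $\xi,\eta$ at $a$, and invoke~(\ref{eq:Tangent.Lie-backet}): $\cL_{w,w'}(\xi,\eta)$ is the class of $[X,Y](a)$ and $\cL_{w',w}(\eta,\xi)$ the class of $[Y,X](a)=-[X,Y](a)$. Skew-symmetry on $\fg M(a)$ then follows from skew-symmetry of the vector-field bracket, and the cases with $w+w'>r$ are trivial.

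The main work is the Jacobi identity, which I would argue using a graded frame. Let $(X_1,\ldots,X_n)$ be an $H$-frame near $a$ and let $(\xi_1,\ldots,\xi_n)$ be the associated graded frame of $\fg M$ from Remark~\ref{rmk:Tangent-group.frame-fgM}. By trilinearity it suffices to check the Jacobi identity on triples $(\xi_i(a),\xi_j(a),\xi_k(a))$ with $w_i+w_j+w_k\leq r$. Expanding $[[X_i,X_j],X_k]$ using~(\ref{eq:Carnot-mfld.brackets-H-frame}) gives
\begin{equation*}
 [[X_i,X_j],X_k] = \sum_{w_l\leq w_i+w_j}\sum_{w_m\leq w_l+w_k} L_{ij}^l L_{lk}^m X_m - \sum_{w_l\leq w_i+w_j} X_k(L_{ij}^l)\, X_l.
\end{equation*}
When we pass to the class in $\fg_{w_i+w_j+w_k}M(a)$, the second sum drops out because every $X_l$ there has weight at most $w_i+w_j<w_i+w_j+w_k$; in the first sum only the terms with $w_l=w_i+w_j$ and $w_m=w_i+w_j+w_k$ survive, and by~(\ref{eq:Tangent-Group.Levi-H-frame}) these are exactly the terms contributing to $[[\xi_i,\xi_j],\xi_k](a)$. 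Summing the three cyclic permutations of $(i,j,k)$ and projecting the Jacobi identity for the smooth vector fields $X_i,X_j,X_k$ onto $\fg_{w_i+w_j+w_k}M(a)$ yields the Jacobi identity for $[\cdot,\cdot]$ on $\fg M(a)$.

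The hard part is really just the weight bookkeeping in this projection step: one must be sure that it is precisely the ``top-weight'' structure constants that descend to $\fg M(a)$ and that all lower-weight contributions, together with the derivative terms $X_k(L_{ij}^l)X_l$, vanish under the projection. Once this is clear, each of the three pieces of the desired Jacobi identity is the projection of the corresponding piece of the vector-field Jacobi identity, and the nilpotency of step $r$ is immediate from~(\ref{eq:Carnot.Carnot-grading}) and the weight bound $w_j\leq r$.
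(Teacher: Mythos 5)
Your argument is correct. Note, though, that the paper itself gives no proof of this proposition: it is quoted from Tanaka~\cite{Ta:JMKU70}, the surrounding text only recording that Lemma~\ref{lem-dep} gives smoothness of $[\cdot,\cdot]$ and that the grading compatibility is built into~(\ref{eq-brak-a}). So the only substantive content you had to supply is exactly what you identified: skew-symmetry and the Jacobi identity on each fiber, the rest (smoothness of the quotient bundles $\fg_wM=H_w/H_{w-1}$, smoothness and grading compatibility of the bracket, step-$r$ nilpotency) being immediate from the construction. Your frame computation of the Jacobi identity is sound -- the weight bookkeeping is right: the derivative terms $X_k(L_{ij}^l)X_l$ and all contributions with $w_m<w_i+w_j+w_k$ die in $\fg_{w_i+w_j+w_k}M(a)$, and the constraint $w_m\leq w_l+w_k$ forces $w_l=w_i+w_j$ in the surviving terms, which by~(\ref{eq-jk}) are precisely those of $[[\xi_i(a),\xi_j(a)],\xi_k(a)]$. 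A shortcut worth noting: since $[X_i,X_j]$ is itself a section of $H_{w_i+w_j}$ whose class at $a$ is $[\xi_i(a),\xi_j(a)]$, one application of~(\ref{eq:Tangent.Lie-backet}) to the pair $([X_i,X_j],X_k)$ identifies $[[\xi_i(a),\xi_j(a)],\xi_k(a)]$ with the class of $[[X_i,X_j],X_k](a)$ in $\fg_{w_i+w_j+w_k}M(a)$ with no structure constants at all; summing the cyclic permutations and projecting the vector-field Jacobi identity then finishes the proof in two lines. Your explicit expansion proves the same intermediate identity, just more laboriously.
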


\begin{definition}
The Lie algebra bundle $(\fg M, [\cdot, \cdot])$  is called the \emph{tangent Lie algebra bundle} of $(M,H)$.
\end{definition}

\begin{remark}\label{rem-xixj}
    Let $(X_{1},\ldots,X_{n})$ be an $H$-frame near a point $a\in M$. As mentioned above, this gives rise to a 
    frame $(\xi_{1},\ldots,\xi_{n})$ of $\fg M$ near $x=a$, where $\xi_{j}$ is the class of $X_{j}$ in 
    $\fg_{w_{j}}M$. Furthermore, it follows from~(\ref{eq:Tangent.Lie-backet}) and~(\ref{eq-brak-a}) that, near $x=a$, we have
    \begin{equation}\label{eq-jk}
        [\xi_{i}(x),\xi_{j}(x)]=\left\{
        \begin{array}{cl}
          {\displaystyle \sum_{w_{i}+w_{j}=w_k}L_{ij}^{k}(x)\xi_{k}(x)}    & \text{if $w_{i}+w_{j}\leq r$},  \\
            0  & \text{if $w_{i}+w_{j}>r$},
        \end{array}\right.
    \end{equation}where the functions $L_{ij}^{k}(x)$ are given by~(\ref{eq:Carnot-mfld.brackets-H-frame}). Specializing this to $x=a$ provides us with the structure constants 
    of $\fg M(a)$ with respect to the basis $(\xi_{1}(a),\ldots,\xi_{n}(a))$.  
\end{remark}

The Lie algebra bundle $\fg M$ gives rise to a Lie group bundle $GM$ as follows. As a manifold we take $GM$ to be $\fg M$ and we equip the fibers $GM(a)=\fg M(a)$ with the Dynkin product~(\ref{eq:Carnot.Dynkin-product}).This turns $GM(a)$ into a step~$r$ graded nilpotent Lie group with unit $0$ whose Lie algebra is naturally isomorphic to $\fg M(a)$. The fiberwise product on $GM$ is smooth, and so we obtain the following result. 
 
\begin{proposition}
    $GM$ is a smooth bundle of step $r$ graded nilpotent Lie groups. 
\end{proposition}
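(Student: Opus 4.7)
My plan is to verify the two assertions separately: first, that each fiber $GM(a)$ is a step~$r$ graded nilpotent Lie group, and second, that the fiberwise Dynkin product depends smoothly on the base point. The first point is essentially immediate from what has already been set up, so the real content is smoothness.

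For the fiberwise statement, fix $a\in M$. By the preceding proposition, $(\fg M(a),[\cdot,\cdot])$ is a step~$r$ graded nilpotent Lie algebra, hence the Dynkin product~(\ref{eq:Carnot.Dynkin-product}) is a well-defined polynomial map on $\fg M(a)\times \fg M(a)$: all iterated brackets of length $\geq r+1$ vanish, so the sum is finite. Standard Lie theory (BCH) gives that $(\fg M(a),\cdot)$ is a Lie group with unit $0$, that inversion is $\xi\mapsto -\xi$, and that under $\fg M(a)\simeq T_0\fg M(a)$ its Lie algebra is canonically $(\fg M(a),[\cdot,\cdot])$. Since $\fg M(a)$ is a finite-dimensional real vector space, it is connected and simply connected as a manifold, hence $GM(a)$ is a step~$r$ graded nilpotent Lie group in the sense of Definition~\ref{def:Carnot.Carnot-algebra}. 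The anisotropic dilations~(\ref{eq:Carnot.dilations}) on $\fg M(a)$ are Lie algebra automorphisms, so they lift to Lie group automorphisms of $GM(a)$.

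For smoothness, I would work locally. Pick any point $a\in M$ and an $H$-frame $(X_1,\ldots,X_n)$ over an open neighborhood $U$ of $a$. By Remark~\ref{rmk:Tangent-group.frame-fgM} this induces a graded frame $(\xi_1,\ldots,\xi_n)$ of $\fg M$ over $U$. This trivializes $\fg M|_U\simeq U\times \mathbb{R}^n$ as vector bundles, and by Remark~\ref{rem-xixj} the fiberwise Lie bracket in these coordinates is given by
\begin{equation*}
    [\xi_i(x),\xi_j(x)]=\sum_{w_k=w_i+w_j}L_{ij}^k(x)\,\xi_k(x),
\end{equation*}
with the convention that the sum is empty when $w_i+w_j>r$, and where the coefficients $L_{ij}^k$ are smooth functions on $U$ by Remark~\ref{rmk:Carnot-mfld.brackets-H-frame}. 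Consequently, for any fixed multi-index pattern, the iterated bracket $(\ad_{\xi})^{\alpha_1}(\ad_{\eta})^{\beta_1}\cdots (\ad_{\xi})^{\alpha_n}(\ad_{\eta})^{\beta_n-1}\eta$ evaluated at $(x,\xi,\eta)\in U\times \mathbb{R}^n\times \mathbb{R}^n$ is a polynomial in the components of $\xi$ and $\eta$ whose coefficients are polynomials in the $L_{ij}^k(x)$, and therefore smooth in $x$.

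Plugging this into the Dynkin series~(\ref{eq:Carnot.Dynkin-product}) and truncating at length~$r$ (the tail vanishes identically on $\fg M(x)$ for every $x\in U$ since the step is~$r$), we obtain that the product map
\begin{equation*}
    (x,\xi,\eta)\longmapsto \xi\cdot_x \eta
\end{equation*}
is given in the trivialization by a polynomial in $(\xi,\eta)$ with smooth coefficients in $x$, and is in particular smooth on $U\times \mathbb{R}^n\times \mathbb{R}^n$. The same holds for inversion, which by~(\ref{eq:GM.inverse}) is just $\xi\mapsto -\xi$ fiberwise. This exhibits $GM|_U\to U$ as a smooth Lie group bundle, and since $a\in M$ was arbitrary we conclude that $GM\to M$ is a smooth bundle of step~$r$ graded nilpotent Lie groups. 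The only point that requires any care is checking that the truncation of the Dynkin series is legitimate and that its coefficients inherit smoothness from the $L_{ij}^k(x)$; once one has the explicit formula in the graded frame, this is a direct verification.
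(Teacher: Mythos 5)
Your argument is correct and follows essentially the same route as the paper, which simply equips the fibers of $\fg M$ with the Dynkin product and observes that the fiberwise product is smooth. Your local verification of smoothness via a graded frame and the smooth structure functions $L_{ij}^{k}(x)$ just spells out the detail the paper leaves implicit.
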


\begin{definition}\label{def:Tangent.Tangent-group}
    $GM$ is called the \emph{tangent group bundle} of $(M,H)$. Each fiber $GM(a)$, $a\in M$ is called the \emph{tangent group} of $(M,H)$ at $a$. 
\end{definition}

\subsection{The Tangent group bundle of a graded nilpotent Lie group} 
We end this section with the description of the tangent group bundle of a graded nilpotent Lie group. 

Let $G$ be a step $r$ graded nilpotent Lie group $G$ with unit $e$. The Lie algebra $\fg=TG(e)$ has a grading $\fg=\fg_1\oplus \cdots \oplus \fg_r$ satisfying~(\ref{eq:Carnot.grading-bracket}). In what follows, given any $\xi \in \fg$, we denote by $X_\xi$ the unique left-invariant vector on $G$ such that $X_\xi(e)=\xi$. Let $\lambda: G\times \fg \rightarrow TG$ be the vector bundle map given by 
\begin{equation*}
\lambda(a,\xi)= X_\xi(a)= \lambda_a'(0)\xi\in TG(a)  \qquad \text{for all $(a,\xi)\in G\times \fg$},
\end{equation*}where $\lambda_a:G\rightarrow G$ is the left-multiplication by $a$. Then $\lambda$ is a vector bundle isomorphism, and so it provides us with a global trivialization $G\times \fg \simeq TG$. Moreover, this isomorphism is equivariant with respect to the left-regular actions of $G$ on $TG$ and $G\times \fg$. 

For $w=1,\ldots, r$, set  $E_w=\lambda( G\times \fg_w)$, i.e., $E_w$ is the $G$-bundle over $G$ obtained by left-translation of $\fg_w$.  In particular, by $G$-equivariance the grading $\fg=\fg_1\oplus \cdots \oplus \fg_r$ gives rise to the $G$-vector bundle grading $TG=E_1 \oplus \cdots \oplus E_r$. This grading then gives rise to the filtration $H_1\subset \cdots \subset H_r=TG$, where $H_w:=E_1 \oplus \cdots \oplus E_w$. 

\begin{lemma}\label{lem:tangent.graded-group}
 The filtration $(H_1,\ldots, H_r)$ is compatible with the Lie bracket of vector fields, i.e.,  $[H_w,H_{w'}]\subset H_{w+w'}$ whenever $w+w'\leq r$. 
\end{lemma}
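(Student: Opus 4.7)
The plan is to reduce the claim to the Lie bracket relations for left-invariant vector fields, using the fact that $H_w$ is locally spanned over $C^\infty(G)$ by left-invariant vector fields coming from $\fg_1\oplus\cdots\oplus\fg_w$.

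First I would observe that, by construction, for each $w$ the subbundle $E_w=\lambda(G\times \fg_w)$ consists of the $G$-translates of $\fg_w$, and so, if $(\xi_j^{(w)})_j$ is a basis of $\fg_w$, then $(X_{\xi_j^{(w)}})_j$ is a global frame of $E_w$. Consequently $H_w=E_1\oplus\cdots\oplus E_w$ admits a global left-invariant frame whose members are the $X_\xi$ with $\xi\in \fg_1\oplus\cdots\oplus\fg_w$. Therefore any section of $H_w$ can be written as $\sum_j f_j X_{\xi_j}$ with $f_j\in C^\infty(G)$ and $\xi_j$ homogeneous of weight $\leq w$; similarly for $H_{w'}$.

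Next I would compute the bracket of two such sections. For $f,g\in C^\infty(G)$, $\xi\in \fg_{w''}$ with $w''\leq w$, and $\eta\in \fg_{w'''}$ with $w'''\leq w'$, the Leibniz rule gives
\begin{equation*}
[fX_\xi,gX_\eta]=fg\,[X_\xi,X_\eta]+f(X_\xi g)X_\eta-g(X_\eta f)X_\xi.
\end{equation*}
Since left-invariant vector fields form a Lie subalgebra of the vector fields on $G$ which is canonically isomorphic to $\fg$, we have $[X_\xi,X_\eta]=X_{[\xi,\eta]}$. The grading condition~(\ref{eq:Carnot.grading-bracket}) on $\fg$ and the assumption $w+w'\leq r$ (which forces $w''+w'''\leq r$) imply $[\xi,\eta]\in \fg_{w''+w'''}\subset \fg_1\oplus\cdots\oplus\fg_{w+w'}$, so $X_{[\xi,\eta]}$ is a section of $H_{w+w'}$. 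The remaining two terms are sections of $H_{w'''}\subset H_{w+w'}$ and $H_{w''}\subset H_{w+w'}$ respectively. Summing over all such elementary brackets by bilinearity yields $[H_w,H_{w'}]\subset H_{w+w'}$.

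There is essentially no obstacle here: the argument is a direct translation of the algebraic grading on $\fg$ into a bundle-level statement, and the only ingredients needed are the triviality of $TG$ by left-invariant vector fields, the Leibniz rule for brackets, and the Lie-algebra homomorphism $\xi\mapsto X_\xi$. The only small point to watch is bookkeeping the weights, but the hypothesis $w+w'\leq r$ keeps all relevant quantities within the grading range and makes~(\ref{eq:Carnot.grading-bracket}) directly applicable.
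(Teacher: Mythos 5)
Your proof is correct and relies on the same mechanism as the paper's: reduce the bracket to left-invariant vector fields, apply $[X_\xi,X_\eta]=X_{[\xi,\eta]}$ together with the grading condition~(\ref{eq:Carnot.grading-bracket}), and absorb the terms coming from differentiating the coefficient functions into the lower pieces $H_{w}$, $H_{w'}$ of the filtration. The only difference is presentational: you expand sections in a global left-invariant frame and apply the Leibniz rule explicitly, whereas the paper argues pointwise at $a$ by invoking Remark~\ref{rmk:Tangent-group.definition-cL}, which encapsulates the same Leibniz-rule computation.
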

\begin{proof}
It is enough to show that $[E_w,E_{w'}]\subset H_w$ whenever $w+w'\leq r$. Given $a \in G$, let $X$ (resp., $X'$) be a local section of $E_w$ (resp., $E_{w'}$) near $a$. In addition, let $\xi \in \fg_w$ and $\xi'\in \fg_{w'}$ be such that $X(a)=X_\xi(a)$ and $X'(a)=X_{\xi'}(a)$. In view of Remark~\ref{rmk:Tangent-group.definition-cL} we have 
\begin{align}
 [X,X'](a) & = [X_\xi, X_{\xi'}](a) \ \bmod E_{w}(a)+E_{w'}(a)\nonumber\\
 &=  X_{[\xi, \xi']}(a) \ \bmod H_{w+w'-1}(a). 
 \label{eq:Tangent.Lie-bracket-graded-group}
\end{align}
As~(\ref{eq:Carnot.grading-bracket}) ensures us that $[\xi, \xi']\in \fg_{w+w'}$, we see that $ X_{[\xi, \xi']}(a)\in E_{w+w'}$, and so  $[X,X'](a)$ is contained in $H_{w+w'}(a)$. This shows that $[E_w,E_{w'}]\subset H_{w+w'}$. The proof is complete. 
\end{proof}

The filtration $H=(H_1, \ldots, H_r)$ defines a left-invariant Carnot manifold structure on $G$. This Carnot manifold structure is uniquely determined by the grading of $\fg$. We shall denote by $\fg G$ and $GG$ the tangent Lie algebra bundle and tangent group bundle of $(G,H)$. We observe that the grading $TG=E_1\oplus \cdots \oplus E_r$ provides us with a natural graded vector bundle identification $TG\simeq \fg G$. Composing it with the trivialization $\lambda$ above, we then 
obtain an isomorphism of \emph{graded} vector bundles $\hat{\lambda}: G\times \fg \xrightarrow{\sim} \fg G$. Moreover, it follows from~(\ref{eq:Tangent.Lie-bracket-graded-group}) and the definition of the Lie bracket in the fibers of $\fg G$ that this isomorphism is compatible with the fiberwise Lie bracket. Therefore, we actually obtain an isomorphism of graded nilpotent Lie algebra bundles. This allows us to identify the tangent Lie algebra bundle of $G$ with the trivial graded nilpotent Lie algebra bundle $G \times \fg$. By functoriality we obtain an isomorphism of graded nilpotent Lie group bundles, 
\begin{equation*}
\hat{\Lambda}: G\times G \stackrel{\sim}{\longrightarrow} GG. 
\end{equation*}
This allows us to identify the tangent group bundle of $G$ with the trivial graded nilpotent Lie group bundle $G\times G$.

\section{Privileged Coordinates}\label{sec:Privileged}
Throughout the rest of the paper, we let $(M,H)$ be an $n$-dimensional Carnot manifold of step $r$, so that 
$H=(H_{1},\ldots,H_{r})$ is a filtration of subbundles satisfying~(\ref{eq:Carnot.Carnot-filtration}). We also let 
$w=(w_{1},\ldots,w_{n})$ be the weight sequence of $(M,H)$ (\emph{cf.}\ Definition~\ref{def:Carnot.weight-sequence}). 

In this section, we gather the main facts regarding anistropic asymptotic analysis and privileged coordinates. We refer to~\cite{CP:Privileged} and 
the references therein for a most extensive account.  

\subsection{Anisotropic Asymptotic Analysis}
In what follows, we let $\delta_t:\R^n\rightarrow \R^n$, $t\in \R$, be the one-parameter group of anisotropic dilations defined by 
\begin{equation}
    \delta_{t}(x)=t\cdot x:=(t^{w_{1}}x_{1},\ldots,t^{w_n}x_{n}), \qquad t\in \R, \ x\in \R^n.
    \label{eq:Nilpotent.dilations2}
\end{equation}

We shall say that a function $f(x)$ on $\R^{n}$ is  \emph{homogeneous} of degree $w$,
$w\in \Z$, with respect to the dilations~(\ref{eq:Nilpotent.dilations2}) when 
\begin{equation}
f(t\cdot x)=t^{w}f(x) \qquad \text{for all $x\in \R^n$ and $t\in \R$}. 
\label{eq:priv.homog-function}
\end{equation}
For instance, given any multi-order $\alpha \in \N_{0}^{n}$, the monomial $x^{\alpha}$ is homogeneous of degree $\brak \alpha$. 
Note also that any smooth homogeneous function must be a polynomial.

In what follows we let $U$ be an open neighborhood of the origin $0\in \R^{n}$. 

\begin{definition}
Let $f\in C^{\infty}(U)$ and $w\in \N_0$. We shall say that
\begin{enumerate}
    \item  $f$ has weight $\geq w$ when $\partial^{\alpha}_{x}f(0)=0$ for all multi-orders
    $\alpha \in \N_{0}^{n}$ such that $\brak\alpha<w$.
    \item  $f$ has weight $w$ when $f(x)$ has weight~$\geq w$ and there is a multi-order
    $\alpha\in \N_{0}^{n}$ with $\brak\alpha=w$ such that $\partial^{\alpha}_{x}f(0)\neq 0$.
\end{enumerate}
\end{definition}

\begin{example}
 Let $f(x)\in C^\infty(\R^n)$ be homogeneous of degree $w$ with respect to the dilations~(\ref{eq:Nilpotent.dilations2}). 
 If $f(x)$ is not zero everywhere, then $f(x)$ has weight $w$. 
\end{example}

\begin{definition}\label{def:anisotropic-pseudo-norm}
A \emph{pseudo-norm} on $\R^n$ is any continuous function $\|\cdot \|:\R^n \rightarrow \R$ which is (strictly) positive on $\R^n\setminus 0$ and satisfies 
\begin{equation}
 \|t\cdot x\|= |t| \|x\| \qquad \text{for all $x\in \R^n $ and $t\in \R$}.
 \label{eq:anisotropic.homogeneity-pseudo-norm}
\end{equation}
\end{definition}
 
\begin{example}
The following function on $\R^n$ is a pseudo-norm, 
\begin{equation}
\|x\|_1=  |x_1|^{\frac{1}{w_1}} + \cdots + |x_n|^{\frac{1}{w_n}}, \qquad x\in \R^n.
\label{eq:anistropic-pseudo-norm1}
\end{equation}
\end{example}

\begin{remark}
 It can be shown that all pseudo-norms are equivalent (see, e.g., \cite{CP:Privileged}).  
\end{remark}

From now on, we let $\|\cdot \|$ be a pseudo-norm on $\R^n$. For instance, we may take the pseudo-norm~(\ref{eq:anistropic-pseudo-norm1}). 
\begin{definition}[\cite{CP:Privileged}]\label{def:anisotropic.Thetaw}
Let $\Theta(x)=(\Theta_{1}(x),\ldots,\Theta_{n'}(x))$ be a smooth map from $U$  to $\R^{n}$. Given $m\in \Z$, $m\geq -w_{n}$, we say that $\Theta(x)$ is $\Ow(\|x\|^{w+m})$, and write $\Theta(x)=\Ow(\|x\|^{w+m})$, when, for $k=1,\ldots,n$, we have
\begin{equation*}
 \Theta_{k}(x)=\op{O}(\|x\|^{w_{k}'+m}) \qquad \text{near $x=0$}.  
\end{equation*}
\end{definition}

We have the following characterization of $\Ow(\|x\|^{w+m})$-maps.
\begin{lemma}[see~\cite{CP:Privileged}]\label{lem-eq-we} Let $\Theta(x)=(\Theta_{1}(x),\ldots,\Theta_{n'}(x))$ be a smooth map from $U$ to $\R^{n'}$, and set $ \cU=\{(x,t)\in U\times \R ; \ t\cdot x \in U\}$. Given any $m\in \N_0$, the following are equivalent:
\begin{enumerate}
 \item[(i)]    The map $\Theta(x)$ is $\Ow(\|x\|^{w+m})$ near $x=0$.

 \item[(ii)]   For $k=1,\ldots,n'$, the component $\Theta_{k}(x)$ has weight~$\geq w_{k}'+m$. 

 \item[(iii)] For all $x\in \R^{n}$ and as $t\rightarrow 0$, we have $t^{-1}\cdot \Theta(t\cdot x)=\op{O}(t^{m})$.  

 
 \item[(iv)]  There is $\tilde{\Theta}(x,t)\in C^\infty(\cU, \R^{n'})$ such that 
 $t^{-1}\cdot \Theta(t\cdot x)=t^m \tilde{\Theta}(x,t)$ for all $(x,t)\in \cU$, $t\neq 0$.
\end{enumerate}
\end{lemma}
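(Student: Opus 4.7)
The plan is to prove the cycle of implications (iv) $\Rightarrow$ (iii) $\Rightarrow$ (ii) $\Rightarrow$ (iv), and separately to verify (ii) $\Leftrightarrow$ (i); together these yield the equivalence of all four conditions. The implication (iv) $\Rightarrow$ (iii) is immediate, since smoothness of $\tilde{\Theta}$ ensures that $\tilde{\Theta}(x,\cdot)$ is bounded near $t=0$ for each fixed $x$. The implication (i) $\Rightarrow$ (iii) is also direct from the homogeneity $\|t\cdot x\|=|t|\|x\|$ of any pseudo-norm. For (ii) $\Rightarrow$ (i), the key inequality $|x^\alpha|\leq \|x\|_1^{\brak\alpha}$ follows from $|x_j|\leq \|x\|_1^{w_j}$; equivalence of pseudo-norms then gives $|x^\alpha|\leq C\|x\|^{\brak\alpha}$ locally, and substituting this into the Taylor expansion of $\Theta_k$ (whose terms with $\brak\alpha<w_k'+m$ vanish by (ii)) yields $\Theta_k(x)=\op{O}(\|x\|^{w_k'+m})$.

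For (iii) $\Rightarrow$ (ii), I would fix $x\in \R^n$ and Taylor-expand $\Theta_k$ about the origin to some order $N$. Since every $w_j\geq 1$ one has $|t\cdot x|\leq C|t|$ for $x$ bounded and $|t|\leq 1$, so
\begin{equation*}
\Theta_k(t\cdot x) = \sum_{|\alpha|\leq N}\frac{\partial^\alpha_x\Theta_k(0)}{\alpha!}\,t^{\brak\alpha}x^\alpha + \op{O}\bigl(|t|^{N+1}\bigr).
\end{equation*}
Matching this against $\Theta_k(t\cdot x)=\op{O}(|t|^{w_k'+m})$ from (iii) and identifying coefficients of like powers of $t$, for $N$ sufficiently large, forces $\sum_{\brak\alpha=l}\partial_x^\alpha\Theta_k(0)x^\alpha/\alpha!\equiv 0$ for every $l<w_k'+m$. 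Hence $\partial_x^\alpha\Theta_k(0)=0$ whenever $\brak\alpha<w_k'+m$, which is (ii).

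The main step, and the one I expect to require the most care, is (ii) $\Rightarrow$ (iv): here one must produce genuine smoothness of $\tilde{\Theta}$ across $t=0$, not merely a size estimate. I would use Taylor's formula with integral remainder to write
\begin{equation*}
\Theta_k(y) = \sum_{|\alpha|\leq N}\frac{\partial^\alpha_x\Theta_k(0)}{\alpha!}\,y^\alpha + \sum_{|\beta|=N+1}y^\beta g_{k,\beta}(y),
\end{equation*}
with $g_{k,\beta}\in C^\infty(U)$. Choosing $N$ so that $N+1\geq w_k'+m$, the surviving polynomial terms all have $\brak\alpha\geq w_k'+m$ by (ii), and the remainder monomials satisfy $\brak\beta\geq |\beta|=N+1\geq w_k'+m$ because each $w_j\geq 1$. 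Substituting $y=t\cdot x$ and dividing by $t^{w_k'+m}$ then produces an expression involving only nonnegative powers of $t$ whose coefficients are smooth in $(x,t)$ on $\cU$; collecting these $k$-components yields the required $\tilde{\Theta}(x,t)$. The essential subtlety is the inequality $\brak\beta\geq |\beta|$, which is precisely what allows the integral-form Taylor remainder to be absorbed into a coefficient that remains smooth as $t\to 0$.
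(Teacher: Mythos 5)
Your proof is correct, and since the paper states Lemma~\ref{lem-eq-we} without proof (it is quoted from~\cite{CP:Privileged}), there is nothing to contrast it with here beyond noting that your route—Taylor expansion arguments for (ii)$\Leftrightarrow$(i) and (iii)$\Rightarrow$(ii), and Taylor's formula with integral remainder plus the weight count $\brak\beta\geq|\beta|\geq w_{k}'+m$ for (ii)$\Rightarrow$(iv)—is exactly the standard argument used in that reference. The only detail worth spelling out is that writing $\Theta_k(y)=\sum_{|\alpha|\leq N}\frac{1}{\alpha!}\partial^\alpha\Theta_k(0)\,y^\alpha+\sum_{|\beta|=N+1}y^\beta g_{k,\beta}(y)$ with $g_{k,\beta}\in C^\infty(U)$ presupposes $U$ star-shaped about $0$; since $\tilde{\Theta}$ is forced and manifestly smooth on $\cU\cap\{t\neq 0\}$, it suffices to run your Taylor argument on a small ball $B\subset U$ centered at $0$ (note $t\cdot x\in B$ for $(x,t)$ near $U\times\{0\}$) and glue, which yields smoothness on all of $\cU$.
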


In the special case of diffeomorphisms we further have the following result. 

\begin{proposition}[see~\cite{CP:Privileged}]\label{lem:Carnot-coord.inverse-Ow}
Let $\phi:U_{1}\rightarrow U_{2}$ be a smooth diffeomorphism, where $U_{1}$ and $U_{2}$ are open neighborhoods of 
the origin $0\in \R^{n}$. Assume further that $\phi(0)=0$ and there is $m\in \N$ such that, near $x=0$, we have 
\begin{equation}
    \phi(x)=\hat{\phi}(x)+\Ow(\|x\|^{w+m}),
    \label{eq:Carnot-coord.phi-hatphi}
\end{equation}where $\hat{\phi}(x)$ is a $w$-homogeneous polynomial map. Then $\hat{\phi}(x)$ is a diffeomorphism of $\R^n$ and, near 
$x=0$, we have
\begin{equation*}
    \phi^{-1}(x)=\hat{\phi}^{-1}(x)+\Ow(\|x\|^{w+m}). 
\end{equation*}
\end{proposition}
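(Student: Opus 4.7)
The plan is to first establish that $\hat\phi$ is a global diffeomorphism of $\R^n$, and then to analyze the one-parameter family $\phi_t(x):=t^{-1}\cdot\phi(t\cdot x)$ in order to derive the expansion for $\phi^{-1}$.

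For the diffeomorphism claim, the starting point is Lemma~\ref{lem-eq-we}: the $j$-th component of the remainder $\Ow(\|x\|^{w+m})$ has weight $\geq w_j+m$, so its partial derivative $\partial_i$ at $0$ vanishes whenever $w_i<w_j+m$; in particular it vanishes as soon as $w_i\leq w_j$, since $m\geq 1$. On the other hand, because $\hat\phi_j$ is a $w_j$-homogeneous polynomial, $\partial_i\hat\phi_j(0)$ vanishes unless $w_i=w_j$. Grouping indices by weight, the Jacobian $d\phi(0)$ is thus block upper-triangular with diagonal blocks equal to those of $d\hat\phi(0)$, so that $\det d\phi(0)=\det d\hat\phi(0)$, and invertibility of $d\phi(0)$ is inherited by $d\hat\phi(0)$. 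The inverse function theorem then produces a local inverse of $\hat\phi$ on some neighborhood $V$ of $0$. Global bijectivity now follows from the identity $\hat\phi(t\cdot x)=t\cdot\hat\phi(x)$: any two points with equal image can be dilated into $V$ without altering that fact, contradicting local injectivity; and any target $z$ can be scaled into $\hat\phi(V)$ and then dilated back. Differentiating the same identity yields $d\hat\phi(t\cdot x)=\delta_t\circ d\hat\phi(x)\circ\delta_t^{-1}$, so $\det d\hat\phi$ is constant along dilation orbits and hence nowhere vanishing. Thus $\hat\phi$ is a global diffeomorphism.

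For the inverse expansion, Lemma~\ref{lem-eq-we}(iv) applied to $\phi-\hat\phi$ furnishes a smooth map $\tilde\Theta$ with
\begin{equation*}
\phi_t(x)=\hat\phi(x)+t^m\tilde\Theta(x,t)
\end{equation*}
on a suitable open neighborhood of $\{t=0\}$ in $\R^n\times\R$, so $\phi_t$ extends smoothly to $t=0$ with $\phi_0=\hat\phi$. Applying the implicit function theorem to $F(x,y,t):=\phi_t(x)-y$, whose $x$-derivative at $t=0$ is the invertible matrix $d\hat\phi(\hat\phi^{-1}(y))$, the inverse $\psi_t:=\phi_t^{-1}$ also extends smoothly to $t=0$ with $\psi_0=\hat\phi^{-1}$. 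The remaining task is to upgrade smoothness into the quantitative estimate $\psi_t(y)-\hat\phi^{-1}(y)=\op{O}(t^m)$. I would differentiate $\phi_t\circ\psi_t=\op{id}$ repeatedly in $t$ and argue by induction: since $\partial_t^i\phi_0=0$ for $0<i<m$ and $d\hat\phi$ is invertible at each point $\hat\phi^{-1}(y)$, the induction hypothesis $\partial_t^j\psi_0=0$ for $1\leq j<i$ forces $\partial_t^i\psi_0=0$ for every $1\leq i<m$. A Taylor expansion in $t$ then gives $\psi_t(y)=\hat\phi^{-1}(y)+\op{O}(t^m)$ uniformly on compact sets in $y$. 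Since $\hat\phi^{-1}$ inherits $w$-homogeneity from $\hat\phi$, we have $\hat\phi^{-1}(y)=t^{-1}\cdot\hat\phi^{-1}(t\cdot y)$, whence $t^{-1}\cdot(\phi^{-1}-\hat\phi^{-1})(t\cdot y)=\psi_t(y)-\hat\phi^{-1}(y)=\op{O}(t^m)$, and Lemma~\ref{lem-eq-we}(iii) delivers the desired $\Ow(\|x\|^{w+m})$ estimate. The step I expect to be the main obstacle is organizing the inductive Fa\`a di Bruno computation cleanly enough to extract uniform bounds on compact sets rather than merely pointwise ones.
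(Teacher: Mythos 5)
Your argument is correct. Note that the paper itself gives no proof of this proposition---it is quoted from the prequel \cite{CP:Privileged}---so there is no in-paper proof to compare against; your route (block-triangularity of $d\phi(0)$ forcing invertibility of $d\hat{\phi}(0)$, $w$-homogeneity to globalize $\hat{\phi}$, then the rescaled family $\phi_t=\delta_t^{-1}\circ\phi\circ\delta_t=\hat{\phi}+t^m\tilde{\Theta}(\cdot,t)$ from Lemma~\ref{lem-eq-we}(iv) combined with the implicit function theorem) is entirely in the spirit of the toolkit the paper sets up around Lemma~\ref{lem-eq-we}. Two small remarks. First, the uniformity on compact sets that you flag as the main obstacle is not actually needed: criterion (iii) of Lemma~\ref{lem-eq-we} is pointwise in the space variable, so for each fixed $y$ it suffices to know $\psi_t(y)-\hat{\phi}^{-1}(y)=\op{O}(t^m)$ as $t\rightarrow 0$, which your local IFT branch already provides (the identification $\psi_t=\phi_t^{-1}$ for $t\neq 0$ being automatic from injectivity of $\phi_t=\delta_t^{-1}\circ\phi\circ\delta_t$). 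Second, the Fa\`a di Bruno induction can be bypassed altogether: from $\phi_t(\psi_t(y))=y$ one gets $\hat{\phi}(\psi_t(y))=y-t^m\tilde{\Theta}(\psi_t(y),t)$, hence $\psi_t(y)=\hat{\phi}^{-1}\bigl(y-t^m\tilde{\Theta}(\psi_t(y),t)\bigr)=\hat{\phi}^{-1}(y)+\op{O}(t^m)$ by local Lipschitz continuity of $\hat{\phi}^{-1}$ together with local boundedness of $\psi_t(y)$ and $\tilde{\Theta}$; this yields the required estimate directly and locally uniformly in $y$, after which homogeneity of $\hat{\phi}^{-1}$ and Lemma~\ref{lem-eq-we}(iii) conclude exactly as you say.
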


The dilations~(\ref{eq:Nilpotent.dilations2}) act on vector fields by pullback. If $X=\sum_{j=1}^n a_j(x)\partial_{x_j}$ is a vector field on $U$, then we have 
\begin{equation}
 \delta_t^*X = \sum_{j=1}^n t^{-w_j}a_j(t\cdot x)\partial_{x_j} \qquad \text{for all $t\in \R^*$}. 
 \label{eq:anisotropic-dtX}
\end{equation}
We then shall say that a vector field $X$ on $\R^{n}$  is \emph{homogeneous} of degree $w$, $w\in \Z$, when
\begin{equation}
    \delta_{t}^{*}X=t^{\omega} X \qquad \text{for all $t\in \R^*$}. 
    \label{eq:priv.homog-vecto-field}
\end{equation}
For instance, the vector field $\partial_{x_j}$, $j=1,\ldots, n$, is homogeneous of degree $-w_j$. 

\begin{definition} 
Let $X=\sum_j a_j(x)\partial_j$ be a smooth vector fields on $U$. We say that $X$ has weight $w$, $w\in \Z$, when 
\begin{enumerate}
    \item[(i)]  Each coefficient $a_j(x)$ has weight~$\geq w+w_j$.

      \item[(ii)]  There is at least one coefficient $a_{j}(x)$ that has weight~$w+w_j$.
  \end{enumerate}
\end{definition}

Let $\cX(U)$ be the space of (smooth) vector fields on $U$. We endow it with its standard Fr\'echet space topology; this allows us to speak about asymptotic expansions in $\cX(U)$ (see~\cite{CP:Privileged}). We have the following asymptotic expansion result.

\begin{proposition}[see~\cite{CP:Privileged}]\label{prop:nilp-approx.vector-fields}
    Let $X$ be a smooth vector field on $U$. Then, as $t\rightarrow 0$, we have 
                \begin{equation}\label{eq-delta-X}
            \delta_{t}^{*}X\simeq \sum_{\ell\geq -w_n} t^{\ell}X^{[\ell]} \qquad \text{in $\cX(U)$},
        \end{equation}
        where $X^{[\ell]}$ is a homogeneous polynomial vector field of degree $\ell$. In particular, 
$X$ has weight $w$ if and only if, as $t\rightarrow 0$, we have 
       \begin{equation*}
       \delta_{t}^{*}X= t^{w}X^{[w]} +\op{O}(t^{w+1})\qquad \text{in $\cX(U)$}.
    \end{equation*}
\end{proposition}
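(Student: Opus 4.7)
The plan is to pass from a Taylor expansion of the coefficients of $X$ to the anisotropic asymptotic expansion of $\delta_t^*X$ via the explicit pullback formula~(\ref{eq:anisotropic-dtX}). Writing $X=\sum_j a_j(x)\partial_{x_j}$, that formula gives $\delta_t^*X = \sum_j t^{-w_j}\,a_j(t\cdot x)\,\partial_{x_j}$, so the whole problem reduces to understanding the behavior of $a_j(t\cdot x)$ as $t\to 0$. First I would Taylor expand each $a_j$ about the origin. Since $(t\cdot x)^\alpha = t^{\langle \alpha\rangle} x^\alpha$, a monomial of multi-order $\alpha$ in $a_j$ contributes $t^{\langle \alpha\rangle - w_j}\, x^\alpha \partial_{x_j}$ to $\delta_t^*X$. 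Regrouping by the exponent of $t$ gives the natural candidate
\[
 X^{[\ell]} \;=\; \sum_{j=1}^n \Biggl(\, \sum_{\langle \alpha\rangle = \ell+w_j} \frac{\partial^\alpha a_j(0)}{\alpha!}\, x^\alpha\,\Biggr) \partial_{x_j},
\]
which is a polynomial vector field, homogeneous of degree $\ell$ with respect to~(\ref{eq:Nilpotent.dilations2}), because each $x^\alpha \partial_{x_j}$ is homogeneous of degree $\langle \alpha\rangle - w_j$. The lower bound $\ell\geq -w_n$ follows from $\langle \alpha\rangle \geq 0$ and $w_j\leq w_n$.

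To turn the formal calculation into a genuine asymptotic expansion in the Fr\'echet space $\cX(U)$, I would combine Taylor's theorem with integral remainder with the characterization provided by Lemma~\ref{lem-eq-we}. Writing each $a_j$ as its Taylor polynomial of weight $\leq N$ plus a remainder $R_{j,N}$, the remainder vanishes at $0$ to anisotropic order~$N+1$; hence by Lemma~\ref{lem-eq-we}(iv) applied component by component, there exists a smooth function $\widetilde{R}_{j,N}(x,t)$ such that $t^{-w_j}R_{j,N}(t\cdot x) = t^{N+1-w_j}\widetilde{R}_{j,N}(x,t)$. Summing over $j$ yields partial sums that fit the template~(\ref{eq-delta-X}) with controlled remainder of arbitrarily large order.

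The second assertion is then read off directly from the explicit formula for $X^{[\ell]}$ above: the condition $X^{[\ell]}=0$ for every $\ell<w$ amounts to $\partial^\alpha a_j(0)=0$ whenever $\langle \alpha\rangle< w+w_j$, which is precisely the statement that each $a_j$ has weight~$\geq w+w_j$; and $X^{[w]}\neq 0$ amounts to at least one $a_j$ having weight exactly $w+w_j$. Together these two conditions are the defining conditions of $X$ having weight $w$.

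The main technical obstacle is showing that the expansion holds not merely pointwise but in the Fr\'echet topology of $\cX(U)$, which requires controlling arbitrarily many spatial derivatives of the remainder uniformly on compacts of $U$. Differentiating the integral Taylor remainders produces expressions that are again $\Ow$-controlled, with the order dropping only by a fixed amount per derivative, so by choosing $N$ large enough each seminorm of $\cX(U)$ is dominated; Lemma~\ref{lem-eq-we}(iv) then still applies and delivers the smooth dependence on $t$ needed to identify the remainder as $\op{O}(t^{N-w_n+1})$ in $\cX(U)$, yielding~(\ref{eq-delta-X}).
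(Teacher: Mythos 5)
Your argument is correct and is essentially the standard proof: the paper itself does not prove this proposition (it is quoted from~\cite{CP:Privileged}), and the argument there proceeds exactly as you do, by Taylor expanding the coefficients $a_j$, regrouping monomials $x^\alpha\partial_{x_j}$ by their anisotropic degree $\brak\alpha-w_j$, and controlling the remainder in $\cX(U)$ via the characterization of Lemma~\ref{lem-eq-we}. Your explicit formula for $X^{[\ell]}$ and the reading-off of the weight criterion (including the observation that $X^{[w]}\neq 0$ corresponds to some $a_j$ having weight exactly $w+w_j$) match the intended proof, so nothing further is needed.
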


\subsection{Construction of privileged coordinates} 
From now on we work in local coordinates near a given point $a\in M$ around which there is an $H$-frame 
$(X_{1},\ldots,X_{n})$. 

\begin{definition}\label{def-lin-adp}
   We say that local coordinates $(x_{1},\ldots,x_{n})$ centered at a point $a\in M$ are \emph{linearly adapted} to
   the $H$-frame $X_{1},\ldots,X_{n}$ when $X_{j}({0})=\partial_{j}$ for $j=1,\ldots,n$.
\end{definition}

\begin{lemma}[\cite{Be:Tangent, CP:Privileged}]\label{lem-affine}
    Given local coordinates $x=(x_{1},\cdots,x_{n})$, there is a unique affine change of coordinates $x\rightarrow T_{a}(x)$
   which provides us with local coordinates centered at $a$ that are linearly adapted to the $H$-frame $(X_{1},\cdots X_{n})$.
\end{lemma}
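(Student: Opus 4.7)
The plan is to write down $T_a$ explicitly by a direct linear-algebra computation, and to read off uniqueness at the same time. Let $(x_1,\ldots,x_n)$ be the given local coordinates, defined on a neighborhood of $a$, and let $x_0\in\R^n$ denote the coordinates of the point $a$. At $x=x_0$, expand each frame vector in the coordinate basis as
\[
X_j(a)=\sum_{k=1}^{n}c_{kj}\,\partial_{x_{k}},\qquad j=1,\ldots,n.
\]
Since $(X_1,\ldots,X_n)$ is a tangent frame near $a$, the vectors $X_1(a),\ldots,X_n(a)$ form a basis of $T_aM$, and hence the $n\times n$ matrix $C=(c_{kj})$ is invertible.

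Next, I would parametrize all candidates for $T_a$. Any affine change of coordinates sending $x_0$ to the origin has the form $T(x)=B(x-x_0)$ for some invertible matrix $B$; the translation part is forced by the centering condition at $a$. Writing $y=T(x)$, the chain rule gives $\partial_{x_k}=\sum_{l}B_{lk}\,\partial_{y_l}$, so that
\[
X_j(0)=\sum_{l=1}^{n}(BC)_{lj}\,\partial_{y_l},\qquad j=1,\ldots,n.
\]
The linear-adaptation condition $X_j(0)=\partial_{y_j}$ of Definition~\ref{def-lin-adp} is therefore equivalent to the single matrix identity $BC=I$. This determines $B$ uniquely as $C^{-1}$, so that $T_a(x):=C^{-1}(x-x_0)$ is the only affine change of coordinates producing linearly adapted coordinates, and it manifestly does so.

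No real obstacle is expected: the statement amounts to the elementary observation that specifying a basis of $T_aM$ together with the point $a$ is equivalent to specifying an affine chart centered at $a$. The only point requiring care is the direction of the coordinate change in the chain-rule computation, where one must use the Jacobian of $T$ itself rather than of $T^{-1}$ when re-expressing $\partial_{x_k}$ in the new chart.
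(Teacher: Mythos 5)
Your proof is correct and follows the same direct linear-algebra route the paper relies on: since your matrix $C=(c_{kj})$ is exactly $B_{X}(a)^{t}$ in the paper's notation, your formula $T_a(x)=C^{-1}(x-x_0)$ coincides with the expression $T_{a}(x)=(B_{X}(a)^{t})^{-1}(x-a)$ recorded in Section~7, and the chain-rule computation forcing $BC=I$ is the intended uniqueness argument.
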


In what follows, given any
finite sequence $I=(i_{1},\ldots,i_{k})$ with values in $\{1,\ldots,n\}$, we set
$X_{I}:=X_{i_{1}}\cdots X_{i_{k}}$ and  $\brak I=w_{i_{1}}+\cdots + w_{i_{k}}$. Given any multi-order $\alpha\in \N_{0}^{n}$, we also set
$ \langle \alpha \rangle = w_1 \alpha_1 + \cdots + w_n \alpha_n$.

\begin{definition}[\cite{Be:Tangent, CP:Privileged}] \label{orderD} 
Let $f(x)$ be a smooth function defined near $x=a$ and $N$ a non-negative integer. We say that $f(x)$ has \emph{order} $N$ at $a$ when the following two conditions are satisfied:
 \begin{enumerate}
     \item  $X_{I}f(a)=0$ whenever $\brak I<N$.

     \item There is a sequence $I=(i_{1},\ldots,i_{k})$ taken values in $\{1,\ldots,n\}$ with $\brak I=N$ such that $X_{I}f(a)\neq 0$.
 \end{enumerate}
\end{definition}

\begin{remark}
 The order of a function is independent of the choice of the $H$-frame $(X_1,\ldots, X_n)$ (see~\cite{Be:Tangent, CP:Privileged}). In this sense this is an intrinsic notion. 
\end{remark}

\begin{definition}[\cite{Be:Tangent, BS:SIAMJCO90, CP:Privileged}]\label{def:Privileged-coordinates}
    We say that local coordinates $x=(x_{1},\ldots,x_{n})$ centered at $a$ are \emph{privileged coordinates} at $a$ adapted to
    the $H$-frame ($X_{1},\ldots, X_{n})$ when the following two conditions are satisfied:
    \begin{enumerate}
        \item[(i)] These coordinates are linearly adapted at $a$ to the $H$-frame $(X_{1},\ldots ,X_{n})$. 

        \item[(ii)] For all $j=1,\ldots,n$, the coordinate function $x_{j}$ has order $w_{j}$ at $a$.
    \end{enumerate}
\end{definition}
 
The following shows how to convert linearly adapted coordinates into privileged coordinates. 

\begin{proposition}[\cite{Be:Tangent, CP:Privileged}]\label{prop:privileged}
Let $(x_{1},\ldots,x_{n})$ be local coordinates centered at $a$ that are linearly adapted to the $H$-frame
$(X_{1},\ldots,X_{n})$. Then there is a unique change of coordinates $x\rightarrow \psi(x)$ such that
\begin{enumerate}
    \item  It provides us with privileged coordinates at $a$.

    \item  For $k=1,\ldots,n$, the $k$-th component $\psi_{k}(x)$ is of the form,
    \begin{equation}
        \psi_{k}(x)=x_{k}+\sum_{\substack{\brak\alpha <w_{k}\\ |\alpha|\geq 2}} a_{k\alpha}x^{\alpha}, \qquad
        a_{k\alpha}\in \R.
        \label{eq-form-h}
    \end{equation}
\end{enumerate}
\end{proposition}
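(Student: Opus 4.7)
The plan is to construct each component $\psi_k$ independently for $k = 1, \ldots, n$. When $w_k = 1$, property~(2) forces $\psi_k(x) = x_k$ since no $\alpha$ satisfies both $|\alpha| \geq 2$ and $\langle \alpha\rangle < 1$; linear adaptation ensures that $x_k$ has order exactly $1 = w_k$ at $a$. For $w_k \geq 2$, the unknowns are the finitely many coefficients $a_{k\alpha}$ with $|\alpha| \geq 2$ and $\langle \alpha \rangle < w_k$, and the goal is to show they are uniquely determined by the requirement that $\psi_k$ have order $w_k$ at $a$. Since the correction consists of monomials of total degree $\geq 2$, one has $X_k \psi_k(0) = 1$, so the order of $\psi_k$ is automatically at most $w_k$; it therefore suffices to enforce
\begin{equation*}
 X_I \psi_k(0) = 0 \qquad \text{for every sequence $I$ with $\langle I \rangle < w_k$}.
\end{equation*}

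I would proceed by induction on the weight $s$, determining the coefficients $a_{k\alpha}$ with $\langle \alpha \rangle = s$ at the $s$-th step, for $s = 2, \ldots, w_k - 1$. The base case $s = 1$ is automatic: any $I$ with $\langle I\rangle = 1$ is a singleton $(j)$ with $w_j = 1 < w_k$, so $j \neq k$, and linear adaptation yields $X_j \psi_k(0) = \delta_{jk} = 0$. For the inductive step at weight $s$, assume the coefficients with $\langle \alpha \rangle < s$ have been uniquely chosen so that $X_I \psi_k(0) = 0$ for all $\langle I \rangle < s$. By the anisotropic expansion of Proposition~\ref{prop:nilp-approx.vector-fields}, coefficients $a_{k\alpha}$ with $\langle \alpha \rangle > s$ do not contribute to $X_I \psi_k(0)$ when $\langle I\rangle = s$, and the singleton conditions at this level are again automatic since $w_j = s < w_k$ forces $j \neq k$. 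After identifying sequences $I$ with the same multi-index $\alpha(I)$, the remaining nontrivial conditions form a square system of size $|\cA_s|\times|\cA_s|$ in the unknowns $\{a_{k\alpha}\}_{\alpha\in\cA_s}$, where $\cA_s = \{\alpha : |\alpha|\geq 2,\,\langle\alpha\rangle = s\}$.

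The main step---and the one I expect to be the central obstacle---has two parts: (i) the level-$s$ conditions descend to equivalence classes under $\alpha(I) = \alpha(I')$, and (ii) the resulting square matrix is invertible. For (i), the difference $X_I - X_{I'}$ for such sequences is a telescoping sum of commutators, and the bracket relations~(\ref{eq:Carnot-mfld.brackets-H-frame}) for the $H$-frame express each commutator as a combination of shorter sequences of weight $\leq s$; combining the inductive hypothesis for weight $< s$ with an auxiliary induction on the length $|I|$, whose base case is the automatic vanishing of singleton conditions, yields $(X_I - X_{I'})\psi_k(0) = 0$. For (ii), by weighted homogeneity, for any weight-$s$ homogeneous polynomial $q$ and any $I$ with $\langle I\rangle = s$ one has $X_I q(0) = X_I^{[-s]} q(0)$, where $X_I^{[-s]}$ is the composition of the leading components $X_j^{[-w_j]}$ of the frame vector fields. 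Each $X_j^{[-w_j]}$ equals $\partial_j$ plus polynomial terms of the form $x^\gamma \partial_\ell$ with $|\gamma|\geq 1$, so a direct degree-count on iterated applications to $x^\alpha$ gives $X_I^{[-s]} x^\alpha(0) = 0$ whenever $|\alpha| > |\alpha(I)|$ and $X_I^{[-s]} x^{\alpha(I)}(0) = \alpha(I)!$. The system matrix is thus lower triangular in the $|\alpha|$-ordering with nonzero diagonal entries, hence invertible. Iterating from $s = 2$ up to $w_k - 1$ simultaneously yields existence and uniqueness of $\psi_k$; since $d\psi(0)$ is the identity, the resulting $\psi$ is a local diffeomorphism near $0$.
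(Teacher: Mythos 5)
Your overall plan (construct each $\psi_k$ separately, reduce the order conditions from arbitrary sequences $I$ to multi-indices via the bracket relations~(\ref{eq:Carnot-mfld.brackets-H-frame}), then solve a triangular linear system) is close in spirit to the argument of Bella\"iche and \cite{CP:Privileged} that the paper invokes, but there is a genuine gap at the heart of your induction: the claim that, when $\brak I=s$, the coefficients $a_{k\alpha}$ with $\brak\alpha>s$ do not contribute to $X_I\psi_k(0)$. This would follow if no component of $X_j$ had weight below $-w_j$, i.e.\ if the coordinates were already privileged --- which is precisely what is being constructed; Proposition~\ref{prop:nilp-approx.vector-fields} by itself only gives an expansion starting at $t^{-w_n}$ in merely linearly adapted coordinates. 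Concretely, take $n=3$, weight sequence $(1,3,5)$, and the $H$-frame $X_1=\partial_1+x_1\partial_2$, $X_2=\partial_2$, $X_3=\partial_3$ (all brackets vanish, and the coordinates are linearly adapted at $a=0$). For $k=3$ one admissible correction is $\alpha=e_1+e_2$, of weight $4<w_3=5$, and a direct computation gives $X_1(x_1x_2)=x_2+x_1^2$, $X_1^2(x_1x_2)=3x_1$, $X_1^3(x_1x_2)=3\neq 0$, although $\brak{(1,1,1)}=3<4=\brak{e_1+e_2}$. So the level-$3$ condition $X_1^3\psi_3(0)=0$ does involve the weight-$4$ unknown with a nonzero coefficient: your level-$s$ systems are not closed in the weight-$s$ unknowns, and, worse, choosing the weight-$(s+1)$ coefficients at the next stage can destroy the level-$s$ identities already arranged, so the induction hypothesis is not maintained. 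The same circularity infects the identity $X_Iq(0)=X_I^{[-s]}q(0)$ in your invertibility step, since it too presupposes that the expansion of $\delta_t^*X_j$ starts at $t^{-w_j}$.

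The triangularity that actually works is graded by the total degree $|\alpha|$, not by the weight $\brak\alpha$: for the ordered monomials $X^\alpha=X_1^{\alpha_1}\cdots X_n^{\alpha_n}$ one has, unconditionally and by pure derivative counting, $X^\alpha(x^\beta)(0)=0$ whenever $|\beta|>|\alpha|$ and $X^\alpha(x^\alpha)(0)=\alpha!$, which is what underlies the recursion~(\ref{eq-pri-det}) recalled in Remark~\ref{rem-ab}. Combined with a reduction --- in the spirit of your part (i), which is the correct ingredient --- from the conditions $X_I\psi_k(0)=0$, $\brak I<w_k$, to the ordered-monomial conditions $X^\alpha\psi_k(0)=0$, $\brak\alpha<w_k$, this yields both existence and uniqueness. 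As written, however, your weight-graded elimination does not go through.
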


\begin{remark}\label{rem-ab}
    The proofs of Proposition~\ref{prop:privileged} in~\cite{Be:Tangent, CP:Privileged} actually produce an effective algorithm to compute the coefficients  $a_{k\alpha}$,  in~(\ref{eq-form-h}) . More precisely, we have the following recursive relations,
     \begin{equation}\label{eq-pri-det}
    \alpha!a_{k\alpha}=- \left.X^{\alpha}(x_{k})\right|_{x=0}
    - \sum_{\substack{\brak\beta <w_{k}\\
 2\leq |\beta|<|\alpha|}} a_{k\beta} \left.X^{\alpha}(x^{\beta})\right|_{x=0}, \quad 2\leq |\alpha|\leq \brak\alpha <w_k. 
\end{equation}
In particular, we have $\alpha!a_{k\alpha}=- \left.X^{\alpha}(x_{k})\right|_{x=0}$ when $w_k=3$ and $|\alpha|=\brak\alpha =2$. It follows from this      that each coefficient $a_{k\alpha}$ is a universal polynomial in the 
    derivatives $\left.X^{\alpha}(x^{\beta})\right|_{x=0}$ with $\brak \beta \leq w_{k}$ and $|\beta|\geq 1$. Set 
    $X_{j}=\sum_{k=1}^{n}b_{jk}(x)\partial_{x_k}$. An induction shows that
    \begin{equation*}
        X^{\alpha}=\sum_{1\leq |\beta|\leq |\alpha|}b_{\alpha\beta}(x)\partial^{\beta}_x,
    \end{equation*}where $b_{\alpha\beta}(x)$ is a universal polynomial in the partial derivatives 
    $\partial^{\gamma}b_{jk}(x)$ with $|\gamma|\leq |\alpha|-|\beta|$. As 
    $\left.X^{\alpha}(x^{\beta})\right|_{x=0}=\beta! b_{\alpha\beta}(0)$, we then deduce that each coefficient 
    $a_{j\alpha}$ is a universal polynomial in the partial derivatives $\partial^{\gamma}b_{kl}(0)$ with $|\gamma|\leq 
    |\alpha|-1$. 
\end{remark}

\begin{definition}\label{def-psia}
The map $\psi_a:\R^n\rightarrow \R^n$ is composition $\hat{\psi}_a\circ T_a$, where  $T_{a}$ is the affine map from Lemma~\ref{lem-affine} and  
$\hat{\psi}_a$ the polynomial diffeomorphism associated by Proposition~\ref{prop:privileged} with the linearly adapted  coordinates provided by $T_a$.  
\end{definition}

Proposition~\ref{prop:privileged} and the definition of $\psi_a$ immediately imply the following statement. 

\begin{proposition}\label{prop:privileged.psi-privileged}
 The change of variables $x\rightarrow \psi_a(x)$ provides us with privileged coordinates at $a$ that are adapted to the $H$-frame $(X_1,\ldots, X_n)$.
\end{proposition}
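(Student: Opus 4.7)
The proof should be essentially a one-line unpacking of the definition of $\psi_a$, given that everything has already been done in Lemma~\ref{lem-affine} and Proposition~\ref{prop:privileged}. The plan is to verify each of the two conditions (i) and (ii) in Definition~\ref{def:Privileged-coordinates} by tracking the effect of the two-step composition $\psi_a = \hat{\psi}_a \circ T_a$.

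First I would observe that by Lemma~\ref{lem-affine}, after performing the affine change of variables $x \rightarrow T_a(x)$ we arrive at local coordinates that are centered at $a$ and linearly adapted to the $H$-frame $(X_1,\ldots,X_n)$, i.e., in these intermediate coordinates $a$ corresponds to the origin and $X_j(0)=\partial_j$ for $j=1,\ldots,n$. This is the hypothesis needed to invoke Proposition~\ref{prop:privileged} in the intermediate chart.

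Next, Proposition~\ref{prop:privileged} applied to those linearly adapted coordinates provides precisely the polynomial diffeomorphism $\hat{\psi}_a$ of the form~(\ref{eq-form-h}) whose effect is to transform them into privileged coordinates at $a$ adapted to the $H$-frame $(X_1,\ldots,X_n)$. Thus both requirements of Definition~\ref{def:Privileged-coordinates} — that the coordinates be linearly adapted, and that each coordinate function $x_j$ have order $w_j$ at $a$ — are ensured by construction for the coordinates obtained after the two successive changes of variables.

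Finally, since a composition of changes of coordinates corresponds exactly to the composition of the corresponding diffeomorphisms of $\R^n$, the total change of variables $x\rightarrow \psi_a(x)=\hat{\psi}_a(T_a(x))$ lands in privileged coordinates at $a$ adapted to $(X_1,\ldots,X_n)$. There is no real obstacle here; the only thing to be careful about is that the ``adaptedness to the $H$-frame'' is a property of the frame expressed in the new coordinates and is invariant under the particular intermediate chart used, so that the conclusions of Proposition~\ref{prop:privileged} in the intermediate chart translate verbatim into the same conclusions in the final chart reached by $\psi_a$.
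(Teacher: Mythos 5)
Your argument is exactly the paper's: the paper notes that the statement follows immediately from Proposition~\ref{prop:privileged} together with the definition $\psi_a=\hat{\psi}_a\circ T_a$, which is precisely the two-step unpacking you carry out. Your proposal is correct and matches the intended proof.
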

 
We shall refer to the coordinates provided by Proposition~\ref{prop:privileged.psi-privileged} as the \emph{$\psi$-privileged coordinates}. 

\begin{proposition}[\cite{CP:Privileged}]\label{prop-uni}
   The $\psi$-privileged coordinates are the unique privileged coordinates at $a$ adapted to the $H$-frame
    $(X_{1},\ldots,X_{n})$ that are given by a change of coordinates of the form $y=\hat{\psi}(Tx)$, where $T$ is an affine
    map such that $T(a)=0$ and $\hat{\psi}(x)$ is a polynomial diffeomorphism of the form~(\ref{eq-form-h}).
\end{proposition}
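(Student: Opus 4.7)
The plan is to reduce Proposition~\ref{prop-uni} to the two uniqueness statements already at our disposal: the uniqueness of the affine map $T_a$ in Lemma~\ref{lem-affine} and the uniqueness of the polynomial diffeomorphism of the form~(\ref{eq-form-h}) in Proposition~\ref{prop:privileged}. Suppose we are given a change of coordinates $y=\hat{\psi}(Tx)$ that provides privileged coordinates at $a$ adapted to $(X_1,\ldots,X_n)$, where $T$ is affine with $T(a)=0$ and $\hat{\psi}$ is a polynomial diffeomorphism of the form~(\ref{eq-form-h}). The goal is to conclude that necessarily $T=T_a$ and $\hat{\psi}=\hat{\psi}_a$, so that $y=\psi_a(x)$ by Definition~\ref{def-psia}.

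The key observation is that the intermediate coordinates $z:=Tx$ are themselves linearly adapted to the $H$-frame. Indeed, from the explicit form~(\ref{eq-form-h}) one reads off at once that $\hat{\psi}(0)=0$ and $\hat{\psi}'(0)=I$. The first equality ensures that $z=0$ corresponds to $y=0$, so the $z$-coordinates are centered at $a$. The second equality, combined with the change-of-variables formula for vector fields, shows that any vector field has the same components at the origin when expressed in either the $y$-coordinates or the $z$-coordinates. Since condition~(i) of Definition~\ref{def:Privileged-coordinates} forces $X_j(a)=\partial_{y_j}$ at $y=0$, we deduce $X_j(a)=\partial_{z_j}$ at $z=0$ for all $j$. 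The uniqueness part of Lemma~\ref{lem-affine} then yields $T=T_a$.

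With $T=T_a$ established, the given change of coordinates becomes $y=\hat{\psi}(T_a(x))$, which produces privileged coordinates at $a$ starting from the linearly adapted coordinates provided by $T_a$. Since $\hat{\psi}$ is a polynomial diffeomorphism of the form~(\ref{eq-form-h}), the uniqueness clause of Proposition~\ref{prop:privileged} forces $\hat{\psi}=\hat{\psi}_a$. Combining the two equalities yields $y=\hat{\psi}_a(T_a(x))=\psi_a(x)$, which identifies the given coordinates with the $\psi$-privileged coordinates of Proposition~\ref{prop:privileged.psi-privileged}.

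The only real content beyond a direct invocation of the two uniqueness results lies in the transfer of linear adaptation from the $y$-coordinates to the $z$-coordinates in the second paragraph. This transfer rests entirely on the fact that $\hat{\psi}$ has no constant term and identity linear part, both of which are automatic from the form~(\ref{eq-form-h}) (the sum there runs over $|\alpha|\geq 2$, and every monomial $x^\alpha$ with $\langle\alpha\rangle<w_k$ satisfies $|\alpha|\geq 2$ as well, which is compatible with the index constraints). Once this small chain-rule verification is in hand, no further computation is needed and the proposition follows at once.
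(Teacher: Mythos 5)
Your argument is correct and is essentially the intended one: the proposition (quoted here from the prequel without a reproduced proof) follows exactly by combining the uniqueness of the affine map in Lemma~\ref{lem-affine} with the uniqueness clause of Proposition~\ref{prop:privileged}, and your transfer of linear adaptation through $\hat{\psi}$, using $\hat{\psi}(0)=0$ and $\hat{\psi}'(0)=\op{id}$, is the only verification genuinely needed. One cosmetic slip: the parenthetical claim that every monomial $x^\alpha$ with $\brak\alpha<w_k$ automatically satisfies $|\alpha|\geq 2$ is false (take $\alpha=\epsilon_j$ with $w_j<w_k$), but nothing in your proof depends on it, since the constraint $|\alpha|\geq 2$ is explicitly built into the form~(\ref{eq-form-h}).
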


\begin{remark}
 We refer to~\cite{AGS:AAM89, AS:DANSSSR87, St:1986} for alternative polynomial constructions of privileged coordinates. 
  \end{remark}

\begin{remark}\label{rmk:privileged.canonical-coord}
Examples of non-polynomial  privileged coordinates are provided by the canonical coordinates of the first kind of Goodman~\cite{Go:LNM76} and 
Rothschild-Stein~\cite{RS:ActaMath76} and the canonical coordinates of the second kind of Bianchini-Stefani~\cite{BS:SIAMJCO90} and  Hermes~\cite{He:SIAMR91}.  The former coordinates are given by the inverse of the local diffeomorphism, 
\begin{equation}
 (x_1,\ldots, x_n)\longrightarrow \exp\left( x_1X_1+\cdots + x_n X_n\right)\!(a).
 \label{eq:privileged.canonical-coord1} 
\end{equation}
The canonical coordinates of the second kind arise from the inverse of the local diffeomorphism,
\begin{equation}
 (x_1,\ldots, x_n)\longrightarrow \exp\left( x_1X_1\right) \circ \cdots  \circ \exp\left( x_n X_n\right)\!(a). 
  \label{eq:privileged.canonical-coord2}  
\end{equation}
It is shown in~\cite{BS:SIAMJCO90} that the canonical coordinates of the 2nd kind are privileged coordinates in the sense of Definition~\ref{def:Privileged-coordinates} (see also~\cite{CP:Privileged, Je:Brief14, Mo:AMS02}). For the canonical coordinates of the 1st kind the result is proved in~\cite{Je:Brief14} (see also~\cite{CP:Privileged}). 
 \end{remark}

\begin{remark}
 On Carnot-Carath\'eodory manifolds privileged coordinates are the very coordinates in which the ball-box theorem holds~\cite{NSW:AM85}. 
\end{remark}

\subsection{Characterization of privileged coordinates} 
We have the following characterization of privileged coordinates. 

\begin{proposition}[\cite{CP:Privileged}] \label{prop-pri-equiv}
    Let $(x_{1},\ldots,x_{n})$ be local coordinates centered at $a$ that are linearly adapted to the $H$-frame
    $(X_{1},\ldots, X_{n})$.  In addition, let $U\subset \R^n$ be the range of these coordinates.  Then the following are equivalent:
    \begin{enumerate}
        \item[(i)]  The local coordinates $(x_{1},\ldots,x_{n})$ are privileged coordinates at $a$.
        
        \item[(ii)] For $j=1,\ldots, n$ and as $t\rightarrow 0$, we have
                       \begin{equation}
                              t^{w_j} \delta_t^*X_j = X_j^{(a)} + \op{O}(t) \qquad \text{in $\cX(U)$}, 
                              \label{eq:char-priv.model-vector-field2}
                       \end{equation}
                     where $X_j^{(a)}$ is  homogeneous of degree~$-w_j$. 
        
        \item[(iii)]  For $j=1,\ldots,n$, the vector field $X_{j}$ has weight $-w_{j}$ in the local coordinates $(x_{1},\ldots,x_{n})$.
    \end{enumerate}
\end{proposition}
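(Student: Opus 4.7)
The proof splits into two essentially independent pieces: the equivalence of (ii) and (iii) is a direct consequence of the anisotropic asymptotic expansion from Proposition~\ref{prop:nilp-approx.vector-fields}, whereas the equivalence of (i) and (iii) requires a comparison between the order of the coordinate functions under iteration of the $H$-frame and the weights of the coefficients of the $X_j$'s in the given coordinates.

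For (ii) $\Leftrightarrow$ (iii), I would invoke Proposition~\ref{prop:nilp-approx.vector-fields} to write $\delta_t^* X_j \simeq \sum_{\ell\geq -w_n} t^\ell X_j^{[\ell]}$ with $X_j^{[\ell]}$ homogeneous of degree $\ell$. By the very definition of weight, $X_j$ has weight $-w_j$ if and only if $X_j^{[\ell]}=0$ for $\ell<-w_j$ and $X_j^{[-w_j]}\neq 0$; equivalently $t^{w_j}\delta_t^*X_j = X_j^{[-w_j]} + \op{O}(t)$, which is condition (ii) with $X_j^{(a)}:=X_j^{[-w_j]}$. The non-vanishing of $X_j^{[-w_j]}$ is automatic, since linear adaptation forces this leading term to contain $\partial_{x_j}$ as a summand.

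For (iii) $\Rightarrow$ (i), I write $X_j=\sum_k b_{jk}(x)\partial_{x_k}$, so that (iii) translates into each $b_{jk}$ having weight $\geq w_k-w_j$ at the origin. Using that $\partial_{x_k}$ lowers the weight of a function by $w_k$ and that weights add under multiplication, one gets that $X_jf$ has weight $\geq W-w_j$ whenever $f$ has weight $\geq W$, and inductively $X_If$ has weight $\geq W-\brak I$. Applied to $f=x_k$, which has weight exactly $w_k$ since $\partial^\alpha x_k(0)=\delta_{\alpha,e_k}$, this gives $X_Ix_k(0)=0$ for $\brak I<w_k$, while $X_kx_k(0)=b_{kk}(0)=1\neq 0$ by linear adaptation. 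Hence each $x_k$ has order exactly $w_k$ at $a$, so the coordinates are privileged.

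For (i) $\Rightarrow$ (iii), I argue by induction on $\brak\alpha$ that $\partial^\alpha b_{jk}(0)=0$ whenever $\brak\alpha+w_j<w_k$. Given such $\alpha$, let $I_\alpha$ be the sequence listing $\alpha_i$ copies of $i$ for each $i$ followed by a final $j$, so that $\brak{I_\alpha}=\brak\alpha+w_j<w_k$. Writing $X_i=\partial_{x_i}+\sum_l(b_{il}-\delta_{il})\partial_{x_l}$ and expanding at the origin using linear adaptation yields
\[ X_{I_\alpha}x_k(0)=\partial^\alpha b_{jk}(0)+R_\alpha, \]
where $R_\alpha$ is a polynomial in derivatives $\partial^\beta b_{j'k'}(0)$ with $\brak\beta<\brak\alpha$ and $\brak\beta+w_{j'}<w_{k'}$. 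By the induction hypothesis each such derivative vanishes, so $R_\alpha=0$, while the privileged hypothesis gives $X_{I_\alpha}x_k(0)=0$, forcing $\partial^\alpha b_{jk}(0)=0$. The main obstacle is precisely the combinatorial bookkeeping in establishing this triangular identity, i.e.\ verifying that every remainder term produced by iterated composition of the $X_i$'s falls within the inductive range. Conceptually, it expresses the fact that the change of data at the origin from $\{X_If(a)\}$ to $\{\partial^\alpha f(0)\}$ is upper triangular with respect to the weight filtration, with non-zero diagonal entries.
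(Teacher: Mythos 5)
Your treatment of (ii)$\Leftrightarrow$(iii) via Proposition~\ref{prop:nilp-approx.vector-fields}, and of (iii)$\Rightarrow$(i) via additivity of weights under products and the observation $X_kx_k(0)=b_{kk}(0)=1$, is correct (these are the easier parts; note the paper only quotes the proposition from~\cite{CP:Privileged}, so there is no internal proof to compare against). The gap is in (i)$\Rightarrow$(iii), exactly at the ``combinatorial bookkeeping'' you defer: the triangular identity as you state it is false, and the induction on $\brak\alpha$ does not close. The remainder $R_\alpha$ in $X_{I_\alpha}x_k(0)=\partial^\alpha b_{jk}(0)+R_\alpha$ is \emph{not} a polynomial in derivatives $\partial^\beta b_{j'k'}(0)$ with $\brak\beta<\brak\alpha$ and $\brak\beta+w_{j'}<w_{k'}$: it also contains products in which one factor is not in the vanishing range at all and the other has $\brak\beta=\brak\alpha$. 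Concretely, take weights $(1,2,3,4)$, $j=1$, $k=4$, $\alpha=2e_1$, so $I_\alpha=(1,1,1)$. Then
\begin{equation*}
X_1X_1X_1x_4(0)=\partial_1^2b_{14}(0)+\sum_{l}\partial_1b_{1l}(0)\,\partial_lb_{14}(0),
\end{equation*}
and the $l=2$ term is $\partial_1b_{12}(0)\,\partial_2b_{14}(0)$: the first factor has $\brak{e_1}+w_1=w_2$, so it lies outside the vanishing range and is generically nonzero (it is one of the coefficients entering the model vector field~(\ref{eq-homo-app})), while the second factor has $\brak{e_2}=2=\brak\alpha$, hence is not covered by your induction hypothesis. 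So you cannot conclude $R_\alpha=0$ from what you have assumed.

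The strategy is salvageable, but the induction must be run on $|\alpha|$ rather than $\brak\alpha$, and the triangularity needs an actual argument. Expanding $X_{I_\alpha}x_k(0)$ yields a sum of products $\prod_s\partial^{\mu_s}b_{i_sl_s}(0)\cdot\partial^{\nu}b_{jk}(0)$ with $\sum_s\brak{\mu_s}+\brak\nu=\sum_s w_{l_s}$. If every factor of such a term lay outside the vanishing range, i.e.\ $\brak{\mu_s}+w_{i_s}\ge w_{l_s}$ for all $s$ and $\brak\nu+w_j\ge w_k$, summing these inequalities would give $\brak\alpha+w_j\ge w_k$, contradicting $\brak\alpha+w_j<w_k$; moreover, in every nonvanishing non-principal term each factor carries strictly fewer than $|\alpha|$ derivatives (if all derivatives hit $b_{jk}$, the undifferentiated factors $b_{i_sl_s}(0)=\delta_{i_sl_s}$ force the principal term). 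Hence every non-principal term contains a factor $\partial^\beta b_{j'k'}(0)$ with $\brak\beta+w_{j'}<w_{k'}$ and $|\beta|<|\alpha|$, which vanishes by the induction hypothesis on $|\alpha|$, and privilegedness then gives $\partial^\alpha b_{jk}(0)=0$. This weight count is precisely the missing step; without it (or an equivalent device) the implication (i)$\Rightarrow$(iii) --- which Remark~\ref{rem:privileged-coordinates-equivalence} identifies as the main content of the proposition --- is not established.
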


\begin{remark}[see, e.g.,~\cite{CP:Privileged}] 
  As the coordinates $(x_{1},\ldots,x_{n})$ are linearly adapted at $a$ to the $(X_{1},\ldots,X_{n})$, in these coordinates, for $j=1,\ldots, n$, we can write, 
\begin{equation}
 X_j= \partial_j + \sum_{1\leq k \leq n}b_{jk}(x)\partial_{x_k}, \qquad b_{jk}(x)\in C^\infty(U),\ b_{jk}(0)=0. 
 \label{eq:Nilpotent.Xj-linearly-adapted}
\end{equation}
The model vector fields $X_{j}^{(a)}$, $j=1,\ldots, n$, are then given by
\begin{equation}\label{eq-homo-app}
    X_{j}^{(a)}= \partial_{x_j}+ \sum_{\substack{w_{j}+\brak\alpha=w_{k}\\ w_k>w_j}} \frac{1}{\alpha!}\partial^{\alpha}_x
    b_{jk}(0)x^{\alpha} \partial_{x_k}.
\end{equation}
\end{remark}

\begin{remark}\label{rem:privileged-coordinates-equivalence}
Goodman~\cite{Go:LNM76} defined privileged coordinates as linearly adapted local coordinates satisfying~(\ref{eq:char-priv.model-vector-field2}). It is well known that privileged coordinates in the sense of Definition~\ref{def:Privileged-coordinates} are privileged coordinates in Goodman's sense (see~\cite{BS:SIAMJCO90, Be:Tangent, Je:Brief14}). The converse property is the main insight of Proposition~\ref{prop-pri-equiv}. There is a number of privileged coordinates in Goodman's sense (see, e.g.,~\cite{ABB:SRG, AGS:AAM89, BS:SIAMJCO90, Go:LNM76, Gr:CC, He:SIAMR91, MM:JAM00, Me:CPDE76, Mo:AMS02, RS:ActaMath76, St:1986}). Therefore, all these  coordinates are privileged coordinates in the sense of Definition~\ref{def:Privileged-coordinates}. 
\end{remark}

One consequence of Proposition~\ref{prop-pri-equiv} in~\cite{CP:Privileged} is the description of \emph{all} the systems of privileged coordinates at a given point.
 
\begin{proposition}[\cite{CP:Privileged}]\label{prop:char-priv.phi-hatphi-Ow}
Suppose that $(x_1,\ldots, x_n)$ are privileged coordinates at $a$ adapted to the $H$-frame $(X_1,\ldots, X_n)$. Then a change of coordinates $x\rightarrow \phi(x)$ produces privileged coordinates at $a$ adapted to $(X_1,\ldots, X_n)$ if and only if we have 
\begin{equation}
 \phi(x)=\hat{\phi}(x) +\Ow\left(\|x\|^{w+1}\right)  \qquad \text{near $x=0$},
 \label{eq:char-priv-coord.phi-x-Ow}
\end{equation}
where $\hat{\phi}(x)$ is a $w$-homogeneous polynomial diffeomorphism such that $\hat{\phi}'(0)=\op{id}$. 
\end{proposition}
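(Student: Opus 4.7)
The plan is to prove the two implications separately, using the equivalence between privileged coordinates and the asymptotic/weight behavior of the vector fields $X_j$ supplied by Proposition~\ref{prop-pri-equiv}. For the forward direction, assume the change of coordinates $x\mapsto\phi(x)=y$ produces privileged coordinates at $a$ adapted to $(X_1,\ldots,X_n)$. Since both systems are linearly adapted to the same $H$-frame, we must have $\phi(0)=0$ and $\phi'(0)=\op{id}$. Each new coordinate function $y_j=\phi_j(x)$ has order $w_j$ at $a$ in the sense of Definition~\ref{orderD}; working in the original privileged coordinates, order coincides with weight (a fact from~\cite{CP:Privileged}), so each $\phi_j$ has weight at least $w_j$. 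I would then let $\hat{\phi}_j$ be the weight-$w_j$ homogeneous component of the Taylor series of $\phi_j$ at $0$. By construction $\phi_j-\hat{\phi}_j$ has weight at least $w_j+1$, which via Lemma~\ref{lem-eq-we} yields $\phi-\hat{\phi}=\Ow(\|x\|^{w+1})$. The map $\hat{\phi}$ is $w$-homogeneous by construction; the identity $\hat{\phi}'(0)=\op{id}$ follows from $\phi'(0)=\op{id}$ together with the observation that the linear monomial $x_k$ can enter the weight-$w_j$ polynomial $\hat{\phi}_j$ only when $w_k=w_j$. Finally, since $\hat{\phi}'(0)=\op{id}$ is invertible, $\hat{\phi}$ is a local diffeomorphism at $0$, and its $w$-homogeneity $\hat{\phi}(t\cdot x)=t\cdot\hat{\phi}(x)$ together with the fact that every point of $\R^n$ can be reached by an anisotropic dilation upgrades this to a global diffeomorphism of $\R^n$.

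For the reverse direction, assume $\phi=\hat{\phi}+\Ow(\|x\|^{w+1})$ with $\hat{\phi}$ a $w$-homogeneous polynomial diffeomorphism satisfying $\hat{\phi}'(0)=\op{id}$. Proposition~\ref{lem:Carnot-coord.inverse-Ow} immediately gives the analogous decomposition $\phi^{-1}(y)=\hat{\phi}^{-1}(y)+\Ow(\|y\|^{w+1})$. I would first verify that $\phi(0)=0$, that $\phi$ is a local diffeomorphism, and that the new coordinates are linearly adapted to $(X_1,\ldots,X_n)$. To verify the order condition, I would invoke criterion~(ii) of Proposition~\ref{prop-pri-equiv}: it suffices to show that in the new coordinates each rescaled vector field $t^{w_j}\delta_t^{*}\tilde{X}_j$, where $\tilde{X}_j=\phi_{*}X_j$, converges in $\cX(U)$ as $t\to 0$ to a vector field homogeneous of degree $-w_j$. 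Using the chain rule $\tilde{X}_j(y)=\phi'(\phi^{-1}(y))X_j(\phi^{-1}(y))$, substituting the decompositions of $\phi$ and $\phi^{-1}$, and exploiting the exact identity $\hat{\phi}(t\cdot x)=t\cdot\hat{\phi}(x)$ to absorb the dilations into $\hat{\phi}$, the limit is the $\hat{\phi}$-pushforward of the nilpotent approximation $X_j^{(a)}$ provided by the original privileged coordinates, which is homogeneous of degree $-w_j$.

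The main obstacle will be the careful bookkeeping in the reverse direction. One has to propagate the $\Ow$-errors consistently through the inverse map, the Jacobian chain rule, and the anisotropic dilation $\delta_t^{*}$, and then extract the leading-order limit. The key simplification is that every ingredient interacts cleanly with the anisotropic dilations: $\hat{\phi}$ is exactly $w$-homogeneous, the $\Ow$-remainders rescale as $\op{O}(t)$ after applying $t^{w_j}\delta_t^{*}$ by Lemma~\ref{lem-eq-we}(iii), and the vector fields $X_j$ already admit the desired nilpotent-approximation asymptotics in the original privileged coordinates. Combining these features reduces the reverse direction to checking that the higher-order perturbations do not affect the leading-order limit, which closes the equivalence.
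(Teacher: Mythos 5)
Your forward direction is fine and follows the natural route suggested by the machinery this paper quotes from~\cite{CP:Privileged} (the paper itself does not reprove the proposition, so there is no in-text proof to compare against): both systems being linearly adapted gives $\phi(0)=0$ and $\phi'(0)=\op{id}$, order equals weight in privileged coordinates gives $\phi_j$ weight $\geq w_j$, and extracting the weight-$w_j$ homogeneous part, applying Lemma~\ref{lem-eq-we}, and upgrading the local invertibility of $\hat{\phi}$ to global invertibility by $w$-homogeneity are all correct.

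The gap is in the reverse direction, at the step where you say you would ``verify\dots that the new coordinates are linearly adapted to $(X_1,\ldots,X_n)$''. This is not a consequence of the hypothesis $\phi=\hat{\phi}+\Ow(\|x\|^{w+1})$ with $\hat{\phi}'(0)=\op{id}$. An $\Ow(\|x\|^{w+1})$ remainder $\Theta$ only satisfies $\partial^\alpha\Theta_k(0)=0$ for $\brak{\alpha}\leq w_k$; it can perfectly well have $\partial_l\Theta_k(0)\neq 0$ when $w_l>w_k$ (for $w=(1,2)$ the map $\Theta(x)=(x_2,0)$ is $\Ow(\|x\|^{w+1})$). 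Then $\phi'(0)=\op{id}+\Theta'(0)\neq\op{id}$, and in the new coordinates $X_j(0)=\phi'(0)\partial_j=\partial_j+\sum_{w_k<w_j}\partial_j\Theta_k(0)\,\partial_k$, so condition (i) of Definition~\ref{def:Privileged-coordinates} (with the strict Definition~\ref{def-lin-adp}) fails even though the order condition (ii) survives. Note also that criterion (ii) of Proposition~\ref{prop-pri-equiv}, which you invoke to conclude, is stated only for linearly adapted coordinates, so it presupposes exactly the property in question. This is the one delicate point of the statement: in your forward direction you actually obtain the stronger normalization $\phi'(0)=\op{id}$ (not merely $\hat{\phi}'(0)=\op{id}$), and your converse argument only closes if that normalization is carried along, i.e.\ if you additionally show or assume $\Theta'(0)=0$, or else explain in what weaker (Bella\"iche-type) sense ``adapted'' is to be understood. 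As written, the asymptotic bookkeeping you outline (absorbing $\delta_t$ into $\hat{\phi}$ by exact homogeneity, remainders contributing $\op{O}(t)$, limit $\hat{\phi}_*X_j^{(a)}$) is the right computation, but it establishes condition (ii)-type behavior only; the linear-adaptation step would fail and needs to be addressed explicitly.
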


\begin{remark}[see~\cite{CP:Privileged}]
 By combining Proposition~\ref{prop:char-priv.phi-hatphi-Ow} with Proposition~\ref{prop-uni} we see that system of local coordinates is a system of privileged coordinates at $a$ adapted to $(X_1,\ldots, X_n)$ if and only if it arises from a local chart of the form $\phi \circ \psi_{\kappa(a)} \circ  \kappa$, where $\kappa$ is  local chart near $a$ such that $\kappa(a)=0$, the map $\psi_{\kappa(a)}$ is as in Definition~\ref{def-psia}, and $\phi$ is a diffeomorphism near the origin $0\in \R^n$ satisfying~(\ref{eq:char-priv-coord.phi-x-Ow}).  
\end{remark}

\section{Nilpotent Approximation of a Carnot Manifold}\label{sec:Nilpotent-approximation} 
In this section, we recall the main facts on the nilpotent approximation of a Carnot manifold. Throughout this section we let $(X_{1},\ldots.,X_{n})$ be a $H$-frame near a given point $a\in M$. 

\subsection{Nilpotent approximation} 
Suppose that $(x_1,\ldots, x_n)$ are privileged coordinates at $a$ adapted to $(X_1,\ldots, X_n)$. By Proposition~\ref{prop-pri-equiv}, for $j=1,\ldots, n$, the vector field $X_j$ has weight $-w_j$ and we have an asymptotic of the form~(\ref{eq:char-priv.model-vector-field2}). We shall call the leading vector field $X_j^{(a)}$ in~(\ref{eq:char-priv.model-vector-field2}) the \emph{model vector field} of $X_j$. 

Let $\tilde{\fg}^{(a)}$ the subspace of $T\R^{n}$ spanned by the model vector fields $X^{(a)}_{1}, \ldots, X^{(a)}_{n}$. We have a natural grading, 
\begin{equation}\label{eq-g-grading}
    \tilde{\fg}^{(a)}=\fg_{1}^{(a)}\oplus \cdots \oplus \fg_{r}^{(a)},
\end{equation}
where $\fg_{w}^{(a)}$ is the subspace spanned by the vector fields $X_j^{(a)}$ with $w_j=w$. Moreover, as $(X_{1},\ldots,X_{n})$ is an $H$-frame, it follows from Remark~\ref{rmk:Carnot-mfld.brackets-H-frame} 
that there are smooth functions $L_{ij}^{k}(x)$, $w_{k}\leq w_{i}+w_{j}$, satisfying~(\ref{eq:Carnot-mfld.brackets-H-frame}). We then have the relations, 
 \begin{equation}
     [X_{i}^{(a)}, X_{j}^{(a)}]=\left\{
     \begin{array}{cl}
   {\displaystyle \sum_{w_{k}=w_{i}+w_{j}}L_{ij}^{k}(a)X_{k}^{(a)}} & \text{if $w_{i}+w_{j}\leq  r$},   \\
         0 & \text{otherwise}.  \\
     \end{array}\right.
     \label{eq:Nilpotent.structure-constants-Xj(a)}
 \end{equation}
This shows that $\tilde{\fg}^{(a)}$ is closed under the Lie bracket of vector fields and this Lie bracket is compatible with the grading~(\ref{eq-g-grading}). Thus, $\tilde{\fg}^{(a)}$ is a graded nilpotent Lie algebra of vector fields on $\R^n$. Furthermore, using~(\ref{eq-jk}) we see that the Lie algebras $\tilde{\fg}^{(a)}$ and $\fg M(a)$ are isomorphic. In addition, the homogeneity of the model vector fields $X_j^{(a)}$ implies that the dilations $\delta_t$, $t\in \R$, give rise to a Lie algebra automorphisms of $\tilde{\fg}^{(a)}$. 

We realize $\tilde{\fg}^{(a)}$ as a Lie algebra of left-invariant vector fields on a Lie group $G^{(a)}$ as follows. 
Let $U$ be the range of the privileged coordinates $(x_1,\ldots, x_n)$. As there are linearly adapted at $a$ to $(X_1,\ldots, X_n)$,  we know by~(\ref{eq:Nilpotent.Xj-linearly-adapted}) that in these coordinates $X_j=\partial_{x_j} + \sum_{k=1}^n b_{jk}(x) \partial_{x_k}$, with $b_{jk}(x)\in C^\infty(U)$ such that $b_{jk}(0)=0$. The formula~(\ref{eq-homo-app}) then expresses each model vector field $X_j^{(a)}$, $j=1,\ldots, n$, in terms of the partial derivatives $\partial^\alpha b_{jk}(0)$ with $\brak{\alpha}+w_j=w_k$ and $w_k>w_j$. 

Let  $X=\sum \xi_j X_j^{(a)}$, $\xi_j\in \R$, be a vector field in $\tilde{\fg}^{(a)}$.
 As mentioned in~\cite{CP:Privileged}, for any given 
$y\in \R^n$, the flow $x(t)=\exp(tX)(y)$ is solution of the following \emph{triangular} first order differential system, 
   \begin{equation*}
       \dot{x}_{k}(t)  =\xi_{k}+ \sum_{\substack{w_{j}+\brak\alpha=w_{k}\\
     w_k>w_j}} \xi_{k} b_{jk\alpha} \prod_{w_l<w_k} x_l(t)^{\alpha_l},       \qquad k=1,\ldots,n,  
     \label{eq:Carnot-coord.ODE-system}
   \end{equation*}
where we have set $b_{jk\alpha}=(\alpha!)^{-1}\partial^\alpha b_{jk}(0)$. As this differential system is triangular system it can be solved recursively. Using the initial condition $x(0)=y$ we get 
\begin{gather}
 x_k(t)=y_k +t\xi_k \qquad \text{if $w_k=1$},
 \label{eq:nilpotent.Gammakalphbeta-recursive1}\\
 x_k(t)= y_k + t\xi_k +  \sum_{\substack{w_{j}+\brak\alpha=w_{k}\\
    w_k>w_j}} \xi_{j} b_{jk\alpha} \int_0^t\prod_{w_l<w_k}x_l(s)^{\alpha_l}ds \qquad \text{if $w_k\geq 2$}. 
    \label{eq:nilpotent.Gammakalphbeta-recursive2}
\end{gather}

\begin{lemma}[\cite{CP:Privileged}] \label{lem:nilpotent.flowX}
 Let $X=\sum \xi_j X_j^{(a)}$ and $y\in \R^n$ be as above. Then the flow $x(t):=\exp(tX)(y)$ exists for all $t\in \R$. Moreover, it takes the form, 
\begin{equation}
 x_k(t)= y_k+ t\xi_k  + \sum_{\substack{\brak\alpha+\brak\beta=w_{k}\\ |\alpha|+|\beta|\geq 2}} \hat{c}_{k\alpha\beta} y^\alpha(t\xi)^\beta , \qquad k=1,\ldots, n.  
\label{eq:Carnot-coord.xk(t)}
\end{equation}
where $\hat{c}_{k\alpha\beta}$ is a universal polynomial $\hat{\Gamma}_{k\alpha\beta}(\partial^\gamma b_{jl}(0))$ in the partial derivatives $\partial^\gamma b_{jl}(0)$ with 
$w_j+\brak\gamma=w_l\leq w_k$ and $w_j\leq w_l$. Each polynomial $\hat{\Gamma}_{k\alpha\beta}$ is 
determined recursively from the polynomials $\hat{\Gamma}_{j\gamma \delta}$ with $w_j<w_k$ by using~(\ref{eq:nilpotent.Gammakalphbeta-recursive1})--(\ref{eq:nilpotent.Gammakalphbeta-recursive2}).
\end{lemma}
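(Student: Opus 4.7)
The plan is to solve the triangular polynomial ODE system \eqref{eq:nilpotent.Gammakalphbeta-recursive1}--\eqref{eq:nilpotent.Gammakalphbeta-recursive2} by induction on $w_k$ and to read off the desired polynomial form of $x_k(t)$ along the way. Since the right-hand side of the equation for $\dot x_k(t)$ depends only on the components $x_l(t)$ with $w_l<w_k$, the solution can be built up componentwise, one weight at a time. In particular, global existence of the flow for all $t\in\R$ will come for free: at each step $x_k(t)$ is obtained by a single antiderivative of a polynomial in $y$, $s$ and $\xi$, so it is itself a polynomial in $(y,t,\xi)$ and therefore defined for all $t$.

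For the base case $w_k=1$, the constraint $w_j+\brak\alpha=w_k$ with $w_k>w_j\geq 1$ is vacuous, so $\dot x_k(t)=\xi_k$ and $x_k(t)=y_k+t\xi_k$, which matches~\eqref{eq:Carnot-coord.xk(t)} with an empty higher-order sum. For the inductive step, assume the result for all $l$ with $w_l<w_k$. Then each $x_l(s)$ is a polynomial in $(y,s\xi)$ whose nonconstant monomials have the form $y^{\alpha'}(s\xi)^{\beta'}$ with $\brak{\alpha'}+\brak{\beta'}=w_l$. Taking the product $\prod_{w_l<w_k} x_l(s)^{\alpha_l}$ thus yields a polynomial in $(y,s\xi)$ whose monomials have total weight $\sum_l \alpha_l w_l = \brak\alpha = w_k-w_j$. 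After expanding each monomial as $s^{|\beta'|}y^{\alpha'}\xi^{\beta'}$ and integrating from $0$ to $t$, we obtain
\begin{equation*}
\int_0^t s^{|\beta'|}y^{\alpha'}\xi^{\beta'}\,ds = \frac{1}{|\beta'|+1}\, t^{|\beta'|+1}y^{\alpha'}\xi^{\beta'}.
\end{equation*}
Multiplying by the remaining factor $\xi_j b_{jk\alpha}$ and setting $\beta=\beta'+e_j$ and $\alpha=\alpha'$, we can rewrite each contribution as $\frac{b_{jk\alpha}}{|\beta'|+1}\, y^\alpha (t\xi)^\beta$, with $\brak\alpha+\brak\beta=(w_k-w_j)+w_j=w_k$ and $|\alpha|+|\beta|\geq 2$ (the lower bound following because $\brak{\alpha'}+\brak{\beta'}=w_k-w_j>0$ forces $|\alpha'|+|\beta'|\geq 1$). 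Adding $y_k+t\xi_k$ gives the expansion~\eqref{eq:Carnot-coord.xk(t)}.

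It remains to identify the universal polynomials $\hat\Gamma_{k\alpha\beta}$. By construction, $\hat{c}_{k\alpha\beta}$ is assembled from the $\hat{c}_{l\gamma\delta}$ with $w_l<w_k$, from the coefficient $b_{jk\alpha}=(\alpha!)^{-1}\partial^\alpha b_{jk}(0)$ (for which $w_j+\brak\alpha=w_k$ and $w_j<w_k$), and from the rational integration factors $\frac{1}{|\beta'|+1}$. Unwinding the recursion shows that $\hat{c}_{k\alpha\beta}=\hat\Gamma_{k\alpha\beta}(\partial^\gamma b_{jl}(0))$ is a universal polynomial in the derivatives $\partial^\gamma b_{jl}(0)$ subject to $w_j+\brak\gamma=w_l\leq w_k$ and $w_j\leq w_l$, exactly the assertion of the lemma, with the explicit recursion coming from~\eqref{eq:nilpotent.Gammakalphbeta-recursive1}--\eqref{eq:nilpotent.Gammakalphbeta-recursive2}.

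The only delicate point is the bookkeeping of weights across the integration step, specifically the verification that the extra $\xi_j$ combines with the $s^{|\beta'|+1}$ produced by integration to yield precisely a factor $(t\xi)^\beta$ of the correct total weight $w_k$; once this is set up correctly the induction runs automatically and the polynomial nature of $\hat\Gamma_{k\alpha\beta}$ is transparent. No analytic argument for existence of the flow is needed, since the inductive construction produces the flow explicitly as a polynomial in $(y,t,\xi)$.
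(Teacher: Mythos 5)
Your proof is correct and follows essentially the same route the paper indicates: it solves the triangular system~(\ref{eq:nilpotent.Gammakalphbeta-recursive1})--(\ref{eq:nilpotent.Gammakalphbeta-recursive2}) recursively by weight, with the weight bookkeeping $\brak\alpha+\brak\beta=(w_k-w_j)+w_j=w_k$ after integration handled correctly and global existence coming from the explicit polynomial form of the solution. (The paper itself only cites~\cite{CP:Privileged} for this lemma, but the surrounding discussion of the triangular ODE system is exactly the argument you carried out.)
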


It follows from Lemma~\ref{lem:nilpotent.flowX}  that we have a globally defined smooth exponential map $\exp: \tilde{\fg}^{(a)} \rightarrow \R^n$ given by 
\begin{equation}\label{eq-CH}
    \exp(X):=\left.\exp(tX)(0)\right|_{t=1} \qquad \text{for all $X\in \tilde{\fg}^{(a)}$}. 
\end{equation}
Although, $\exp(X)$ \emph{a priori} arises from the solution of an ODE system, it follows from~(\ref{eq:Carnot-coord.xk(t)}) that $\exp(X)$ is determined effectively in terms of  the coordinates of $X$ in the basis $(X_1^{(a)}, \ldots, X_n^{(a)})$ and the coefficients of the model vector fields $X_j^{(a)}$. Indeed, if  $X= \sum \xi_j X_j^{(a)}$, $\xi_j\in \R$, then setting $t=1$ and $y=0$ shows that $x=\exp(X)$ is given by
\begin{equation}
 x_k= \xi_k  +   \sum_{\substack{\brak\alpha=w_{k}\\ |\alpha|\geq 2}} \hat{c}_{k\alpha} \xi^\alpha ,\qquad k=1,\ldots, n, 
\label{eq:Carnot-coord.xk(1)}
\end{equation}
where we have set $\hat{c}_{k\alpha}=\hat{c}_{k0\alpha}$. It also follows from this formula that $\exp: \tilde{\fg}^{(a)}\rightarrow \R^n$ is a diffeomorphism, since it expresses $x=\exp(X)$ as a triangular polynomial map in the coordinates $(\xi_1,\ldots, \xi_n)$ the diagonal of which is the identity map. In addition, we observe that $x_k$, as a polynomial in $\xi$, is homogeneous of degree $w_k$ with respect to the dilations~(\ref{eq:Nilpotent.dilations2}). Thus, for all $t\in \R^*$, we have 
\begin{equation}
 t\cdot \exp(X)= \exp\left( \sum (t\cdot\xi)_j X_j^{(a)}\right) = \exp \left( \sum \xi_j t^{w_j} X_j^{(a)}\right)= \exp\left( \delta_{t^{-1}}^*X\right).
\label{eq:fga-homogeneity-expX}
\end{equation}

We define the Lie group $G^{(a)}$ as $\R^n$ equipped with the group law given by
\begin{equation}
 x\cdot y = \exp(X\cdot Y), \qquad X,Y\in \tilde{\fg}^{(a)},
 \label{eq:fga.group-law-G(a)}
\end{equation}
where $X$ and $Y$ are the unique elements of $\tilde{\fg}^{(a)}$ such that $\exp(X)=x$ and $\exp(Y)=y$, and $X\cdot Y$ is the Dynkin 
product~(\ref{eq:Carnot.Dynkin-product}). For $j=1,\ldots, n$, the vector field $X_j^{(a)}$ generates a one-parameter subgroup $\exp(tX_j^{(a)})$, $t\in \R$, in $G^{(a)}$, and so this is a left-invariant vector field on $G^{(a)}$. As $(X_1^{(a)}, \ldots, X_n^{(a)})$ is a basis of $\tilde{\fg}^{(a)}$, we then arrive at the following statement. 

\begin{proposition}\label{prop:Nilpotent.G(a)-tildefg(a)}
 The Lie algebra of left-invariant vector fields on $G^{(a)}$ is precisely $\tilde{\fg}^{(a)}$. 
\end{proposition}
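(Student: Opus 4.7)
The strategy is to verify that each model vector field $X_{j}^{(a)}$ is itself a left-invariant vector field on $G^{(a)}$. Since $X_{j}^{(a)}(0)=\partial_{x_{j}}$ by~(\ref{eq-homo-app}), the family $\{X_{1}^{(a)},\ldots,X_{n}^{(a)}\}$ is linearly independent, and being of cardinality $n=\dim G^{(a)}$ it will automatically exhaust the $n$-dimensional Lie algebra of left-invariant vector fields on $G^{(a)}$. The main step is to establish the flow identity
\begin{equation*}
\exp(tX_{j}^{(a)})(z) \;=\; z\cdot \exp(tX_{j}^{(a)}) \qquad \text{for all } z\in \R^{n},\ t\in\R,
\end{equation*}
the left-hand side being the vector-field flow at time $t$ starting at $z$ and the right-hand side being right-translation in $G^{(a)}$ by the element $\exp(tX_{j}^{(a)})$. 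Since right-translations commute with left-translations in any group, this identity forces $X_{j}^{(a)}$ to be left-invariant.

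The key input is the Baker--Campbell--Hausdorff formula for flows of complete vector fields lying in a finite-dimensional nilpotent Lie subalgebra of $\cX(\R^{n})$: for all $X,Y\in \tilde{\fg}^{(a)}$ and all $z\in\R^{n}$,
\begin{equation*}
\exp(Y)\bigl(\exp(X)(z)\bigr) \;=\; \exp(X\cdot Y)(z),
\end{equation*}
where $\cdot$ denotes the Dynkin product~(\ref{eq:Carnot.Dynkin-product}). Completeness of every vector field in $\tilde{\fg}^{(a)}$ is guaranteed by Lemma~\ref{lem:nilpotent.flowX}, and nilpotency follows from the bracket relations~(\ref{eq:Nilpotent.structure-constants-Xj(a)}), so all BCH series terminate. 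Granting this, write any $z\in G^{(a)}$ as $z=\exp(X)$ for a unique $X\in\tilde{\fg}^{(a)}$ (possible by the diffeomorphism $\exp\colon\tilde{\fg}^{(a)}\to\R^{n}$ of Lemma~\ref{lem:nilpotent.flowX}), and apply BCH at $z=0$ with $Y=tX_{j}^{(a)}$:
\begin{equation*}
\exp(tX_{j}^{(a)})(z) = \exp(tX_{j}^{(a)})\bigl(\exp(X)(0)\bigr) = \exp(X\cdot tX_{j}^{(a)})(0) = \exp(X)\cdot \exp(tX_{j}^{(a)}) = z\cdot \exp(tX_{j}^{(a)}),
\end{equation*}
the penultimate equality being the definition~(\ref{eq:fga.group-law-G(a)}) of the group law on $G^{(a)}$. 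This yields the desired flow identity and hence the left-invariance of each $X_{j}^{(a)}$.

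The main obstacle is justifying the Baker--Campbell--Hausdorff identity for flows. This is classical for nilpotent Lie algebras of complete vector fields, and a self-contained verification would proceed by comparing Taylor expansions of both sides in a one-parameter family (using the semigroup property of flows and the associativity of the Dynkin product), with nilpotency ensuring that the relevant series truncate to polynomial identities.
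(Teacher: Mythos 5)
Your proof is correct and takes essentially the same route as the paper: the group law of $G^{(a)}$ is defined via $\exp$ and the Dynkin product on $\tilde{\fg}^{(a)}$, each model vector field $X_j^{(a)}$ is shown to be left-invariant, and the conclusion follows because $(X_1^{(a)},\ldots,X_n^{(a)})$ is a basis of $\tilde{\fg}^{(a)}$ and the space of left-invariant vector fields has dimension $n$. Your BCH-for-flows identity, showing that the flow of $X_j^{(a)}$ is right translation by $\exp(tX_j^{(a)})$ (legitimate here since the flows are polynomial by Lemma~\ref{lem:nilpotent.flowX} and the bracket relations~(\ref{eq:Nilpotent.structure-constants-Xj(a)}) give nilpotency), is exactly the detail behind the paper's terse assertion that $X_j^{(a)}$ generates a one-parameter subgroup of $G^{(a)}$ and is therefore left-invariant.
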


\begin{definition}\label{def-Ga}
The graded nilpotent Lie group $G^{(a)}$ equipped with its left-invariant Carnot manifold structure is called the \emph{nilpotent approximation} of $(M,H)$ at $a$ with respect to the privileged coordinates $(x_1,\ldots, x_n)$. 
\end{definition}

\begin{definition}
 $\fg(a)$ is the nilpotent Lie algebra obtained by equipping $T\R^{n}(0)$ with the Lie bracket given by
  \begin{equation}
     [\partial_{i}, \partial_{j}]=\left\{
     \begin{array}{cl}
   {\displaystyle \sum_{w_{k}=w_{i}+w_{j}}L_{ij}^{k}(a)\partial_{k}} & \text{if $w_{i}+w_{j}\leq  r$},   \\
         0 & \text{otherwise}  \\
     \end{array}\right.
     \label{eq:Nilpotent.structure-constants-fg(a)}
 \end{equation}
\end{definition}

\begin{remark}
 The Lie algebra $\fg(a)$ depends only on the structure constants $L_{ij}^k(a)$, $w_i+w_j=w_k$, and so it does not depend on the choice of the privileged coordinates $(x_1,\ldots, x_n)$. 
\end{remark}

\begin{remark}
$\fg(a)$ is a graded nilpotent Lie algebra with respect to the grading,
\begin{equation}
 \fg(a)=\fg_1(a)\oplus \cdots \oplus \fg_r(a), \qquad \text{where}\ \fg_w(a):=\op{Span}\{\partial_j;\ w_j=w\}. 
 \label{eq:Nilpotent.grading-fg(a)}
\end{equation}
\end{remark}

\begin{proposition}[\cite{CP:Privileged}] \label{prop:Nilpotent.G(a)-fg(a)-graded}
 The Lie algebra of $G^{(a)}$ is precisely $\fg(a)$. Moreover, the dilations~(\ref{eq:Nilpotent.dilations2}) are group automorphisms of $G^{(a)}$.  
\end{proposition}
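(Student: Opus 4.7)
The plan is to identify the Lie algebra of $G^{(a)}$ with $\fg(a)$ fiber-by-fiber and then exploit the Dynkin/exponential machinery already set up to handle the dilations.

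First, I recall that the Lie algebra of $G^{(a)}$ is, by definition, the tangent space at the identity $0\in G^{(a)} = \R^n$ equipped with the bracket induced by the commutator of left-invariant vector fields. As a vector space this tangent space is $T_0\R^n$, which is exactly the underlying vector space of $\fg(a)$. By Proposition~\ref{prop:Nilpotent.G(a)-tildefg(a)}, the Lie algebra of left-invariant vector fields on $G^{(a)}$ is precisely $\tilde{\fg}^{(a)}$, and the canonical isomorphism $T_0G^{(a)}\to \tilde{\fg}^{(a)}$ sends a tangent vector $\xi$ to the unique element $X\in \tilde{\fg}^{(a)}$ with $X(0)=\xi$. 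Since the privileged coordinates are linearly adapted to $(X_1,\ldots,X_n)$ at $a$, we have $X_j^{(a)}(0)=\partial_j$ (this is visible from~(\ref{eq-homo-app})), so the isomorphism simply identifies $\partial_j\in T_0\R^n$ with $X_j^{(a)}\in \tilde{\fg}^{(a)}$.

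Next, I transport the bracket. Evaluating~(\ref{eq:Nilpotent.structure-constants-Xj(a)}) at $x=0$ and using $X_k^{(a)}(0)=\partial_k$ yields
\begin{equation*}
    [\partial_i,\partial_j]_{G^{(a)}} \;=\; [X_i^{(a)},X_j^{(a)}](0) \;=\; \begin{cases} \sum_{w_k=w_i+w_j} L_{ij}^k(a)\,\partial_k & \text{if } w_i+w_j\le r,\\ 0 & \text{otherwise,}\end{cases}
\end{equation*}
which is exactly the defining relation~(\ref{eq:Nilpotent.structure-constants-fg(a)}) of $\fg(a)$. Hence the Lie algebra of $G^{(a)}$ is $\fg(a)$. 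The grading~(\ref{eq:Nilpotent.grading-fg(a)}) is also preserved under the identification $\partial_j\leftrightarrow X_j^{(a)}$, since it matches the grading~(\ref{eq-g-grading}) of $\tilde{\fg}^{(a)}$ term by term.

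For the second assertion, note that for each $j$ the vector field $X_j^{(a)}$ is homogeneous of degree $-w_j$, so $\delta_{t^{-1}}^*X_j^{(a)} = t^{w_j}X_j^{(a)}$. Thus, under the identification $\tilde{\fg}^{(a)}\simeq \fg(a)$, the map $X\mapsto \delta_{t^{-1}}^*X$ is precisely the Lie algebra dilation $\xi\mapsto t\cdot \xi$ associated with the grading. Because the bracket respects the grading, this dilation is a Lie algebra automorphism of $\fg(a)$. Now from~(\ref{eq:fga-homogeneity-expX}) we have $\delta_t\circ \exp = \exp\circ (t\cdot)$, and the group law~(\ref{eq:fga.group-law-G(a)}) is transported by $\exp$ from the Dynkin product on $\tilde{\fg}^{(a)}$. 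Since any Lie algebra automorphism commutes with the Dynkin product~(\ref{eq:Carnot.Dynkin-product}), the dilation $t\cdot$ on $\tilde{\fg}^{(a)}$ intertwines with the group law, and so $\delta_t$ intertwines with the group law on $G^{(a)}$, i.e.\ $\delta_t(x\cdot y)=\delta_t(x)\cdot\delta_t(y)$. This proves $\delta_t$ is a group automorphism.

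There is no real obstacle here; the only care required is tracking the identifications between $T_0\R^n$, $\fg(a)$, and $\tilde{\fg}^{(a)}$ and verifying that the bracket transported from left-invariant vector fields lands on the explicit formula~(\ref{eq:Nilpotent.structure-constants-fg(a)}). The homogeneity statement is then immediate from the already established formula~(\ref{eq:fga-homogeneity-expX}) together with the BCH/Dynkin description of the group law.
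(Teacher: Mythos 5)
Your proof is correct: the paper states this proposition without proof (citing its prequel), and your argument assembles exactly the ingredients already recorded in Section~\ref{sec:Nilpotent-approximation} — Proposition~\ref{prop:Nilpotent.G(a)-tildefg(a)} together with $X_j^{(a)}(0)=\partial_j$ and the relations~(\ref{eq:Nilpotent.structure-constants-Xj(a)}) evaluated at $0$ for the Lie algebra identification, and the homogeneity identity~(\ref{eq:fga-homogeneity-expX}) plus the fact that graded Lie algebra automorphisms preserve the Dynkin product for the dilations — which is the intended route. The only cosmetic caveat is that the automorphism statement should be read for $t\neq 0$, since $\delta_0$ is merely a group endomorphism.
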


\subsection{Getting all nilpotent approximations} As it turns out, the  nilpotent approximation is by no means canonical since it depends on the choice of privileged coordinates. In fact, as shown in~\cite{CP:Privileged} we have a large class of group laws on $\R^n$ that arises from nilpotent approximations at any given point $a$.

\begin{definition}\label{def:Nilpotent.class-cN(a)}
 $\sN_{X}(a)$ consists of nilpotent groups $G$ that are obtained by equipping $\R^n$ with a group law such that
 \begin{enumerate}
 \item[(i)] The dilations~(\ref{eq:Nilpotent.dilations2}) are group automorphisms of $G$. 
 
 \item[(ii)] The Lie algebra $TG(0)$ of $G$ is precisely $\fg(a)$. 
\end{enumerate}
\end{definition}

\begin{remark}
 The condition (i) automatically implies that the origin is the unit of $G$.
\end{remark}

\begin{remark}\label{rmk:Nilpotent.compatibility-dilations-cN(a)}
 The conditions (i)--(ii) also imply that the dilations $\delta_t$, $t\in \R$, induce a family of dilations $\delta_t'(0)$, $t\in \R$, on $TG(0)=\tilde{\fg}^{(a)}$. In the basis 
 $(\partial_1, \ldots, \partial_n)$ they are given by~(\ref{eq:Nilpotent.dilations2}), and so they agree with the dilations~(\ref{eq:Carnot.dilations}) defined by the grading~(\ref{eq:Nilpotent.grading-fg(a)}) of $\fg(a)$. 
\end{remark}

By Proposition~\ref{prop:Nilpotent.G(a)-fg(a)-graded} we know that, given any system of privileged coordinates at $a$ adapted to $(X_1,\ldots, X_n)$, the nilpotent approximation $G^{(a)}$ is in the class $\sN_{X}(a)$. We obtain the converse of this result as follows. 

Let $G$ be a nilpotent Lie group in the class $\sN_X(a)$. Denote by $\tilde{\fg}$ its Lie algebra of left-invariant vector fields. Then $\tilde{\fg}$ has a \emph{canonical basi}s $(Y_1,\ldots, Y_n)$, where $Y_j$ is the unique left-invariant vector field on $G$ such that $Y_j(0)=\partial_j$. As mentioned in~\cite{CP:Privileged}, this basis has the following properties:
\begin{enumerate}
\item[(i)] For $j=1,\ldots, n$, the vector field $Y_j$ is homogeneous of degree~$-w_j$ with respect to the dilations~(\ref{eq:Nilpotent.dilations2}) and agrees with $\partial_j$ at $x=0$. 

\item[(ii)] The vector fields $Y_1, \ldots, Y_n$ satisfy the commutator relations~(\ref{eq:Nilpotent.structure-constants-Xj(a)})
\end{enumerate}
Conversely, if $(Y_1,\ldots, Y_n)$ is a family of vector fields on $\R^n$ satisfying the properties (i)--(ii), then this is the canonical basis of left-invariant vector fields on a unique nilpotent Lie group in the class $\sN_X(a)$ (see~\cite{CP:Privileged}). 

As pointed out in~\cite{CP:Privileged}, the property~(i) ensures us that, for every $y\in \R^n$ and every vector field $Y=\sum \eta_j Y_j$, $\eta_j\in \R$, in $\tilde{\fg}$, the flow $\exp(tY)(y)$ exists for all times $t\in \R$ and is of the form~(\ref{eq:Carnot-coord.xk(t)}). As a result, we have a $w$-homogeneous polynomial diffeomorphism $\exp_Y:\R^n\rightarrow \R^n$ given by
\begin{equation}
 \exp_Y(x)= \exp(x_1Y_1+\cdots + x_nY_n) , \qquad x\in \R^n,   
 \label{eq:many.exp_Y}
\end{equation}
where $ \exp(x_1Y_1+\cdots + x_nY_n)$ is defined as in~(\ref{eq-CH}). 
In fact, $\exp_Y(x)$ is of the form~(\ref{eq:Carnot-coord.xk(1)}), and so $\exp_Y'(0)=\op{id}$.  Note also that $\exp_Y$ is just the exponential map $\exp:\tilde{\fg}\rightarrow G$ in the coordinates defined by the basis $(Y_1,\ldots,Y_n)$.  

In addition, we let $(x_1,\ldots, x_n)$ be privileged coordinates at $a$ adapted to $(X_1,\ldots, X_n)$. For $j=1,\ldots, n$, we also let $X_j^{(a)}$ be the model vector field at $a$ of $X_j$, and we denote by $\tilde{\fg}^{(a)}$ the Lie algebra generated by $X_1^{(a)}, \ldots, X^{(a)}_n$. By Proposition~\ref{prop:Nilpotent.G(a)-tildefg(a)} this is the Lie algebra of left-invariant vector fields on the nilpotent approximation $G^{(a)}$ in the privileged coordinates  $(x_1,\ldots, x_n)$.  Note also that $(X_1^{(a)}, \ldots, X_n^{(a)})$ is the canonical basis of $\fg^{(a)}$. Like in~(\ref{eq:many.exp_Y}) we have a $w$-homogeneous polynomial diffeomorphism $\exp_{X^{(a)}}:\R^n \rightarrow \R^n$ given by
\begin{equation}
 \exp_{X^{(a)}}(x)= \exp\left( x_1X^{(a)}_1+ \cdots + x_nX^{(a)}_n\right), \qquad x\in \R^n.
 \label{eq:many.exp_X(a)} 
\end{equation}
We then define the map $ \phi_Y:\R^n\rightarrow \R^n$ by 
\begin{equation}
 \phi_Y(x)=\exp_Y\circ \exp_{X^{(a)}}^{-1}(x) \qquad \text{for all $x\in \R^n$}. 
 \label{eq:Nilpotent.phiY}
\end{equation}
Note that $\phi_Y$  is a $w$-homogeneous diffeomorphism of $\R^n$ whose differential at $x=0$ is the identity map, since $\exp_Y $ and $\exp_{X^{(a)}}$ are both such maps.

\begin{lemma}[\cite{CP:Privileged}]
 The diffeomorphism $\phi_Y$ is a group isomorphism from $G^{(a)}$ onto $G$. Moreover, this is the unique $w$-homogeneous diffeomorphism of $\R^n$ such that
   $ (\phi_Y)_{*}X_{j}^{(a)}=Y_{j}$ for $j=1,\ldots,n$. 
\end{lemma}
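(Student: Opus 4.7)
The plan is to recognize $\phi_Y$ as the Lie group isomorphism $G^{(a)} \to G$ that lifts the obvious Lie algebra isomorphism matching the two canonical bases, then read off both the pushforward property and uniqueness from this identification.

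First I would observe that, by property~(ii) of the canonical basis $(Y_1,\ldots,Y_n)$ recalled just before the statement, the vector fields $Y_j$ satisfy exactly the same commutator relations~(\ref{eq:Nilpotent.structure-constants-Xj(a)}) as the model vector fields $X_j^{(a)}$, with the same structure constants $L_{ij}^k(a)$. Hence the linear bijection $\Psi: \tilde{\fg}^{(a)} \to \tilde{\fg}$ defined by $\Psi(X_j^{(a)}) = Y_j$ is a Lie algebra isomorphism. Since $G^{(a)}$ and $G$ are both connected and simply connected nilpotent Lie groups (they are $\R^n$ as manifolds), $\Psi$ lifts to a unique Lie group isomorphism $\Phi: G^{(a)} \to G$. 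Now, by the very construction of $G^{(a)}$ in~(\ref{eq-CH})--(\ref{eq:fga.group-law-G(a)}), the abstract Lie group exponential $\exp^{G^{(a)}}$ coincides, under the basis $(X_j^{(a)})$ of $\tilde{\fg}^{(a)}$, with $\exp_{X^{(a)}}$; similarly $\exp^{G}$ corresponds to $\exp_Y$ under the basis $(Y_j)$. Applying the naturality formula $\Phi\circ \exp^{G^{(a)}}=\exp^{G}\circ \Psi$ and observing that, once one uses these two bases to identify $\tilde{\fg}^{(a)}$ and $\tilde{\fg}$ with $\R^n$, the map $\Psi$ becomes the identity on $\R^n$, we obtain $\Phi=\exp_Y\circ \exp_{X^{(a)}}^{-1}=\phi_Y$. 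This proves $\phi_Y$ is a group isomorphism. The pushforward identity $(\phi_Y)_*X_j^{(a)} = Y_j$ then follows at once: since $X_j^{(a)}$ is left-invariant on $G^{(a)}$ and $\phi_Y$ is a group isomorphism, $(\phi_Y)_*X_j^{(a)}$ is left-invariant on $G$, and at the unit $0$ it equals $\phi_Y'(0)\partial_j = \partial_j = Y_j(0)$ (the differential at the origin being the identity was already noted before the lemma). Two left-invariant vector fields agreeing at the unit coincide.

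For uniqueness I would argue directly. Suppose $\phi$ is a $w$-homogeneous diffeomorphism of $\R^n$ satisfying $\phi_*X_j^{(a)} = Y_j$ for $j=1,\ldots,n$. Homogeneity forces $\phi(0)=0$, since $\phi(0)=\phi(t\cdot 0)=t\cdot \phi(0)$ for all $t$. By linearity of the pushforward we get $\phi_*\bigl(\sum \xi_j X_j^{(a)}\bigr)=\sum \xi_j Y_j$ for every $\xi \in \R^n$, and so $\phi$ intertwines the flows of $\sum \xi_j X_j^{(a)}$ and $\sum \xi_j Y_j$. Evaluated at the origin at time~$1$, this yields $\phi\bigl(\exp_{X^{(a)}}(\xi)\bigr)=\exp_Y(\xi)$ for every $\xi \in \R^n$, hence $\phi=\exp_Y\circ \exp_{X^{(a)}}^{-1}=\phi_Y$. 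The only delicate point in the whole argument is the identification of the abstract Lie group exponentials with the explicit polynomial maps $\exp_{X^{(a)}}$ and $\exp_Y$; this is essentially built into~(\ref{eq-CH})--(\ref{eq:fga.group-law-G(a)}) and the analogous description of $\exp^{G}$, but it is the one place where conventions need to be checked carefully.
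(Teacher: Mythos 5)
Your argument is correct. Note that this paper does not prove the lemma at all---it is quoted from the prequel \cite{CP:Privileged}---so there is no in-text proof to compare against; your write-up is a complete standalone justification. The route you take is the natural one: property~(ii) of the canonical basis gives that $(Y_1,\ldots,Y_n)$ and $(X_1^{(a)},\ldots,X_n^{(a)})$ have the same structure constants $L_{ij}^k(a)$, the resulting Lie algebra isomorphism integrates (both groups being connected and simply connected) to a group isomorphism, and naturality of the exponential identifies it with $\exp_Y\circ\exp_{X^{(a)}}^{-1}=\phi_Y$. The ``delicate point'' you flag is in fact already settled in the text: the paper states explicitly that $\exp_Y$ is the exponential map $\exp:\tilde{\fg}\rightarrow G$ in the coordinates of the basis $(Y_1,\ldots,Y_n)$, and $\exp_{X^{(a)}}$ plays the same role for $G^{(a)}$ by~(\ref{eq-CH})--(\ref{eq:fga.group-law-G(a)}), so your identification of the abstract exponentials with the polynomial maps is legitimate. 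Your uniqueness argument only uses $w$-homogeneity to force $\phi(0)=0$ and then the flow-intertwining identity $\phi\circ\exp_{X^{(a)}}=\exp_Y$, so it actually yields uniqueness among all origin-preserving diffeomorphisms, which is stronger than the stated claim and perfectly acceptable.
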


\begin{remark}
 Specializing this result to $G=G^{(a)}$ shows that the identity map is the unique $w$-homogeneous diffeomorphism $\phi:\R^n\rightarrow \R^n$ such that $\phi_*X_j^{(a)}=X_j^{(a)}$ for $j=1,\ldots, n$. 
\end{remark}

\begin{proposition}[\cite{CP:Privileged}]\label{prop:Nilpotent.coord-G(a)G}
 A change of coordinates $x\rightarrow \phi(x)$ yields privileged coordinates at $a$ adapted to $(X_1,\ldots, X_n)$ in which the nilpotent approximation is $G$  if and only if, near $x=0$, we have
\begin{equation}
 \phi(x) =\phi_Y(x) + \Ow\left(\|x\|^{w+1}\right). 
 \label{eq:nilp-approx.phi-phiY-Ow} 
\end{equation}
In particular, $x\rightarrow \phi_Y(x)$ is the unique such change of coordinates which is $w$-homogeneous.  
\end{proposition}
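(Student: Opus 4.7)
The plan is to combine Proposition~\ref{prop:char-priv.phi-hatphi-Ow}, which already characterizes the changes of variables that produce privileged coordinates at $a$, with a direct computation of how the model vector fields transform. Any $\phi$ producing privileged coordinates is of the form $\phi(x)=\hat{\phi}(x)+\Ow(\|x\|^{w+1})$ with $\hat{\phi}$ a $w$-homogeneous polynomial diffeomorphism such that $\hat{\phi}'(0)=\op{id}$. In the new coordinates, the vector field $X_j$ becomes $\phi_*X_j$, and its model vector field $X_j^{\mathrm{new}}$ at the origin is the leading term of $t^{w_j}\delta_t^*(\phi_*X_j)$ as $t\to 0$. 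Once $X_j^{\mathrm{new}}$ is identified, the nilpotent approximation in the new coordinates is determined, via the canonical-basis correspondence in Section~\ref{sec:Nilpotent-approximation}, as the unique group in $\sN_X(a)$ whose canonical basis of left-invariant vector fields is $(X_1^{\mathrm{new}},\ldots,X_n^{\mathrm{new}})$.

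The key computation is to show that $X_j^{\mathrm{new}}=\hat{\phi}_*X_j^{(a)}$ for every $j$. This rests on the conjugation identity
\begin{equation*}
\delta_t^*(\phi_*X_j)=\bigl(\phi^{(t)}\bigr)_*\bigl(\delta_t^*X_j\bigr), \qquad \phi^{(t)}:=\delta_t^{-1}\circ\phi\circ\delta_t,
\end{equation*}
valid for $t\neq 0$. Since $\hat{\phi}$ is $w$-homogeneous, $\hat{\phi}^{(t)}=\hat{\phi}$; since $\phi-\hat{\phi}=\Ow(\|x\|^{w+1})$, Lemma~\ref{lem-eq-we}(iv) produces a smooth $\tilde{\Theta}(x,t)$ with $\phi^{(t)}(x)=\hat{\phi}(x)+t\tilde{\Theta}(x,t)$ near $t=0$. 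Combining this with $t^{w_j}\delta_t^*X_j\to X_j^{(a)}$ in $\cX(U)$ (Proposition~\ref{prop-pri-equiv}) and the continuity of pushforward in the $C^{\infty}_{\mathrm{loc}}$-topology on diffeomorphisms yields $t^{w_j}\delta_t^*(\phi_*X_j)\to\hat{\phi}_*X_j^{(a)}$ as $t\to 0$, proving the claim.

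With this in hand the new nilpotent approximation equals $G$ precisely when $\hat{\phi}_*X_j^{(a)}=Y_j$ for every $j$, and by the uniqueness part of the lemma preceding the proposition this happens iff $\hat{\phi}=\phi_Y$. Substituting back into the characterization of Proposition~\ref{prop:char-priv.phi-hatphi-Ow} yields the asymptotic equivalence $\phi(x)=\phi_Y(x)+\Ow(\|x\|^{w+1})$. For the final assertion, if such a $\phi$ is itself $w$-homogeneous, then the $k$-th component of $\phi-\phi_Y$ is simultaneously a $w$-homogeneous polynomial of degree $w_k$ and a function of weight $\geq w_k+1$, which forces it to vanish identically; hence $\phi=\phi_Y$ is the unique $w$-homogeneous change of coordinates satisfying the condition. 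The main technical obstacle is the convergence $(\phi^{(t)})_*(\delta_t^*X_j)\to\hat{\phi}_*X_j^{(a)}$, for which one must propagate the $\Ow$-estimates through the derivative of the coordinate change; this is exactly where the anisotropic calculus of Section~\ref{sec:Privileged} is essential.
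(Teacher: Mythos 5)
Your argument is correct and is essentially the intended one: this proposition is only quoted here from~\cite{CP:Privileged}, and your route through Proposition~\ref{prop:char-priv.phi-hatphi-Ow}, the conjugation identity $\delta_t^*(\phi_*X_j)=(\delta_t^{-1}\circ\phi\circ\delta_t)_*(\delta_t^*X_j)$ combined with Lemma~\ref{lem-eq-we}, and the uniqueness of $\phi_Y$ as the $w$-homogeneous diffeomorphism pushing the $X_j^{(a)}$ to the $Y_j$ is exactly the toolkit the paper assembles around this statement. The one step worth spelling out is the continuity of the pushforward as $t\rightarrow 0$, which also requires controlling the inverse maps $(\phi^{(t)})^{-1}$ near $t=0$ (available via Proposition~\ref{lem:Carnot-coord.inverse-Ow}); with that made explicit, your proof is complete.
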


Combining Proposition~\ref{prop:Nilpotent.coord-G(a)G} with Proposition~\ref{prop:Nilpotent.G(a)-fg(a)-graded} we get the following result. 

\begin{corollary}[\cite{CP:Privileged}]\label{cor:many.nilp-sNX(a)}
 Let $G$ be a graded nilpotent Lie group built out of $\R^n$. Then $G$ provides us with the nilpotent approximation of $(M,H)$ at $a$ in some privileged coordinates at $a$ adapted to $(X_1,\ldots, X_n)$ if and only if it belongs to the class $\sN_{X}(a)$. 
\end{corollary}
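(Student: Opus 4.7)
The plan is to deduce the corollary as an immediate synthesis of the two preceding results, Proposition~\ref{prop:Nilpotent.G(a)-fg(a)-graded} and Proposition~\ref{prop:Nilpotent.coord-G(a)G}, handling each implication separately. Since neither direction requires any new construction beyond what is already packaged in those propositions, the work consists in identifying the correct input for each.

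For the forward implication, I would suppose that $G$ is realized as the nilpotent approximation $G^{(a)}$ in \emph{some} system of privileged coordinates at $a$ adapted to $(X_{1},\ldots,X_{n})$. Proposition~\ref{prop:Nilpotent.G(a)-fg(a)-graded} then gives at once the two items defining $\sN_{X}(a)$ in Definition~\ref{def:Nilpotent.class-cN(a)}: the Lie algebra $TG(0)$ equals $\fg(a)$, and the anisotropic dilations~(\ref{eq:Nilpotent.dilations2}) act as group automorphisms. Hence $G\in \sN_{X}(a)$.

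For the converse, I would start by fixing \emph{any} system of privileged coordinates at $a$ adapted to $(X_{1},\ldots,X_{n})$ (for instance the $\psi$-privileged coordinates from Proposition~\ref{prop:privileged.psi-privileged}), and denote by $G^{(a)}$ the nilpotent approximation these coordinates produce, with model vector fields $X_{1}^{(a)},\ldots,X_{n}^{(a)}$ and exponential map $\exp_{X^{(a)}}$ as in~(\ref{eq:many.exp_X(a)}). Now, given $G\in\sN_{X}(a)$, the discussion in the paragraph preceding the corollary supplies a canonical basis $(Y_{1},\ldots,Y_{n})$ of left-invariant vector fields on $G$ satisfying the required homogeneity and commutator relations. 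This data yields the exponential map $\exp_{Y}$ of~(\ref{eq:many.exp_Y}) and the $w$-homogeneous diffeomorphism $\phi_{Y}=\exp_{Y}\circ\exp_{X^{(a)}}^{-1}$ of~(\ref{eq:Nilpotent.phiY}), whose differential at the origin is the identity. Since $\phi_{Y}$ trivially satisfies~(\ref{eq:nilp-approx.phi-phiY-Ow}), Proposition~\ref{prop:Nilpotent.coord-G(a)G} applies and tells us that the change of coordinates $x\mapsto \phi_{Y}(x)$ produces a new system of privileged coordinates at $a$ adapted to $(X_{1},\ldots,X_{n})$ in which the nilpotent approximation is precisely $G$.

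I do not anticipate any genuine obstacle: both directions are two-line reductions to the propositions recalled above, and the role of the corollary is merely to record the existence conclusion obtained by unpacking Proposition~\ref{prop:Nilpotent.coord-G(a)G} with $\phi=\phi_{Y}$. The only subtlety worth verifying explicitly is that the canonical basis $(Y_{1},\ldots,Y_{n})$ actually exists for an arbitrary $G\in\sN_{X}(a)$, which is exactly what properties (i)--(ii) recalled after Remark~\ref{rmk:Nilpotent.compatibility-dilations-cN(a)} guarantee, so this is merely a bookkeeping step rather than a real hurdle.
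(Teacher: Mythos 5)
Your proposal is correct and follows exactly the paper's route: the corollary is obtained by combining Proposition~\ref{prop:Nilpotent.G(a)-fg(a)-graded} (forward direction) with Proposition~\ref{prop:Nilpotent.coord-G(a)G} applied with $\phi=\phi_Y$ on top of a fixed system of privileged coordinates (converse). Your explicit check that the canonical basis $(Y_1,\ldots,Y_n)$ exists for any $G\in\sN_X(a)$ is the same bookkeeping the paper relies on.
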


\begin{remark}
 We refer to~\cite{CP:Privileged} for the explicit construction of all the nilpotent approximations of step~2 Carnot manifolds. For general Carnot manifolds the construction also produces explicit families of nilpotent approximations (see~\cite{CP:Privileged}). In any case, we see that the nilpotent approximation encompasses a very large class of groups.  
\end{remark}

\section{Carnot Coordinates}\label{sec:Carnot-coord.existence}
In this section, we shall refine  the construction of privileged coordinates of Section~\ref{sec:Privileged} to get a system of privileged coordinates with respect to which
the nilpotent approximation $G^{(a)}$ is naturally identified with the tangent group $GM(a)$.  As a result, this will clarify the relationship between the nilpotent approximation and tangent group. This will also provide us with a class of privileged coordinates in which the nilpotent approximation enjoys some kind of canonical form. 

Throughout this section we let $(X_{1},\ldots,X_{n})$ be an $H$-frame near a given point $a\in M$.  

\subsection{Coordinate description of $GM(a)$} 
By Remark~\ref{rmk:Carnot-mfld.brackets-H-frame} near the point $a$ there are functions $L_{ij}^k(x)$, $w_i+w_j\leq w_k$, given by the commutator relations~(\ref{eq:Carnot-mfld.brackets-H-frame}). In addition, as mentioned in Remark~\ref{rmk:Tangent-group.frame-fgM}, the $H$-frame $(X_{1},\ldots,X_{n})$ gives rise to a graded basis $(\xi_1(a), \ldots, \xi_n(a))$ of $\fg M(a)$, where $\xi_j(a)$ is the class of $X_j(a)$ in $\fg_{w_j}M(a)$. As pointed out in Remark~\ref{rem-xixj}, this graded basis satisfies the relations~(\ref{eq:Nilpotent.structure-constants-Xj(a)}). 

Given elements $\xi=\sum x_{i}\xi_{i}(a)$ and $\eta=\sum y_{j}\xi_{j}(a)$ of 
${\fg}M(a)$, using~(\ref{eq-jk}) we get
\begin{equation*}
    \ad_{\xi}\eta=\sum_{1\leq i,j \leq n} x_{i}y_{j}[\xi_{i}(a),\xi_{j}(a)]=
   \sum_{w_{i}+w_{j}=w_{k}}x_{i}y_{j}L_{ij}^k(a)\xi_k(a).
\end{equation*}
Therefore, the matrix $A_{a}(x)=\left(A_{a}(\xi)_{kj}\right)_{1\leq k,j\leq n}$ of $\ad_{\xi}$ with respect to the basis 
$(\xi_{1}(a),\ldots,\xi_{n}(a))$ is given by
\begin{equation*}
    A_{a}(x)_{kj} =\left\{\begin{array}{cl} {\displaystyle \sum_{w_i +w_j= w_k} L_{ij}^{k}(a) x_i }&\textup{if $w_j<w_k$},\\
0 &\textup{otherwise.}\end{array}\right.
\end{equation*}
In particular, the matrix  $A_{a}(x)$ is nilpotent and lower-triangular. Combining this with~(\ref{eq:Carnot.Dynkin-product}) we see that, in the coordinates
associated with the basis $(\xi_{1}(a),\ldots,\xi_{n}(a))$,  the product of $GM(a)$ is given by
\begin{equation}
  x\cdot y  =  \sum_{n\geq 1} \frac{(-1)^{n+1}}{n} \sum_{\substack{\alpha, \beta \in \N_0^n\\ \alpha_j+\beta_j\geq 1}} 
 \frac{(|\alpha|+|\beta|)^{-1}}{\alpha!\beta!} A_a(x)^{\alpha_1} A_a(y)^{\beta_1} \cdots A_a(x)^{\alpha_n} A_a(y)^{\beta_n-1}y. 
 \label{eq:GM.group-law}
\end{equation}

When $w_{k}=1$ and $w_k=2$ we obtain
\begin{gather}
    (x\cdot y)_{k}=x_{k}+y_{k} \qquad (w_k=1),
    \label{eq:GM.Xj1}\\
     (x\cdot y)_{k}=x_{k}+y_{k}+\frac{1}{2}\sum_{1\leq k \leq n}A_{a}(x)_{kj}y_{j}
   = x_{j}+y_{j}+\frac{1}{2}\sum_{w_{i}+w_{j}=2}L_{ij}^{k}(a)x_{i}y_{j} \quad (w_k=2).
   \label{eq:GM.Xj2}
\end{gather}
In general,  we have
\begin{equation}
    (x\cdot y)_{k}=x_{k}+y_{k}+ \frac{1}{2} \sum_{w_i+w_j=w_k} L_{ij}^k(a)x_iy_j + \sum_{\substack{\brak\alpha+\brak\beta=w_{k}\\ |\alpha|+|\beta|\geq 3}}
    B_{\alpha\beta}^k\left(L(a)\right)x^{\alpha}y^{\beta},
    \label{eq:GM.group-law2}
\end{equation}where the coefficients $B_{\alpha\beta}^k(L(a))$ are universal polynomials in the structure constants $L_{pq}^{l}(a)$, $w_p+w_q=w_l$. 

\begin{definition}
 The graded nilpotent Lie group $G(a)$ is $\R^n$ equipped with the group law~(\ref{eq:GM.group-law}). 
\end{definition}

\begin{remark}\label{rmk:Carnot-coord.independence-G(a)}
 As~(\ref{eq:GM.Xj1})--(\ref{eq:GM.group-law2}) show, the group law of $G(a)$ only depends on the structure constants $L_{ij}^k(a)$ with $w_i+w_j=w_k$. In particular,   $G(a)$ only depends on the jets of the $H$-frame $(X_1,\ldots,X_n)$ at $x=a$. 
\end{remark}

We summarize the previous discussion in the following. 

\begin{proposition}\label{prop:Carnot-coord.GM(a)-G(a)}
 Let $(\xi_1(a), \ldots, \xi_n(a))$ be the graded basis of $\fg M(a)$ defined by the $H$-frame $(X_1,\ldots, X_n)$. Then it defines a global system of coordinates that identifies the tangent group $GM(a)$ with the nilpotent Lie group $G(a)$. 
\end{proposition}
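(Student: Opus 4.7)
The plan is to view the computations carried out in the paragraph preceding the statement as essentially constituting the proof, packaged as a change-of-basis argument. The graded basis $(\xi_1(a),\ldots,\xi_n(a))$ of $\fg M(a)$ induces a linear isomorphism $\iota : \R^n \to \fg M(a)$ given by $\iota(x) = \sum x_j \xi_j(a)$. Since $\xi_j(a)\in \fg_{w_j}M(a)$, this is a graded isomorphism of vector spaces, and in particular a diffeomorphism; it therefore provides a global coordinate system on $\fg M(a)$ and hence on the underlying manifold of $GM(a)$.

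The first step is to record the Lie algebra bracket of $\fg M(a)$ in these coordinates. By~(\ref{eq-jk}) specialized at $x=a$, the structure constants are precisely the $L_{ij}^{k}(a)$ with $w_i+w_j=w_k$, so the matrix of $\ad_\xi$ in the basis $(\xi_1(a),\ldots,\xi_n(a))$ is the matrix $A_a(x)$ displayed above, where $\xi=\iota(x)$. This matrix is strictly lower-triangular with respect to the weight ordering and thus nilpotent, which implements on the coordinate side the step-$r$ nilpotency of $\fg M(a)$ and guarantees that all iterated brackets of length at least $r+1$ vanish.

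The second step is to substitute $A_a(x)$ and $A_a(y)$ into the Dynkin formula~(\ref{eq:Carnot.Dynkin-product}) that defines the group law of $GM(a)$. This substitution yields exactly the formula~(\ref{eq:GM.group-law}) on $\R^n$, which is by definition the group law of $G(a)$. Hence $\iota$ intertwines the products on $G(a)$ and $GM(a)$, so it is a Lie group isomorphism. The sum in~(\ref{eq:GM.group-law}) is visibly finite and polynomial thanks to the nilpotency of $A_a(x)$, which also confirms the truncations~(\ref{eq:GM.Xj1})--(\ref{eq:GM.group-law2}) in low weights.

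I do not anticipate any substantive obstacle: the statement amounts to a bookkeeping assembly of~(\ref{eq-jk}), the computation of $A_a(x)$, and the Dynkin formula~(\ref{eq:Carnot.Dynkin-product}). The only care needed is to verify that the sums involved really terminate and produce polynomial coordinates, which follows at once from the step-$r$ nilpotency inherent in the Carnot filtration. The mild conceptual point worth flagging is that, as observed in Remark~\ref{rmk:Carnot-coord.independence-G(a)}, the resulting group $G(a)$ depends only on the values $L_{ij}^k(a)$ with $w_i+w_j=w_k$, i.e.\ only on the $1$-jet of the $H$-frame at $a$; this is automatic from the argument since only these structure constants enter $A_a(x)$.
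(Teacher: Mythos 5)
Your proposal is correct and follows essentially the same route as the paper, which simply packages the preceding computation of $A_a(x)$ and its substitution into the Dynkin product~(\ref{eq:Carnot.Dynkin-product}) as the proof; your only addition is to phrase the coordinate identification explicitly as the linear isomorphism $\iota(x)=\sum x_j\xi_j(a)$ intertwining the two group laws, which is exactly what the paper's summary intends.
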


For $j=1,\ldots,n$, let $X_j^a$ be the left-invariant vector field on $G(a)$ that agrees with $\partial_j$ at $x=0$. Then $(X_1^a,\ldots, X_n^a)$ is the canonical basis of the Lie algebra of left-invariant vector fields on $G(a)$. If we let $(\epsilon_1,\ldots, \epsilon_n)$ be the canonical basis of $\R^n$, then, for $j=1,\ldots, n$ and for all $f \in C^\infty(\R^n)$,  we have
\begin{equation*}
 X_j^af(x)=\frac{d}{dt}\left.f\left(x\cdot (t\epsilon_j)\right)\right|_{t=0}= \sum_{1\leq k \leq n} \frac{d}{dt}\left.\left(x\cdot (t\epsilon_j)\right)_k\right|_{t=0} \partial_kf(x). 
\end{equation*}
Therefore, using~(\ref{eq:GM.group-law2}) we obtain
\begin{equation}
 X_j^a(x)= \partial_j +  \frac{1}{2} \sum_{w_i+w_j=w_k} L_{ij}^k(a)x_i \partial_k + \sum_{\substack{\brak\alpha+w_j=w_{k}\\ |\alpha|\geq 2}}
    B_{\alpha \epsilon_j}^k\left(L(a)\right)x^{\alpha}\partial_k,
    \label{eq:GM.Xj}
\end{equation}
where we regard $\epsilon_j$ as an element of $\N_0^n$. 

The following observation is the crux of this section. 

\begin{proposition}\label{prop:G(a)-sNX(a)}
The nilpotent Lie group $G(a)$ belongs to the class $\sN_X(a)$. 
\end{proposition}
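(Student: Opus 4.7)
The plan is to verify the two defining conditions of $\sN_X(a)$ given in Definition~\ref{def:Nilpotent.class-cN(a)} directly for $G(a)$, using the explicit form of its group law obtained in~(\ref{eq:GM.group-law2}) and the associated left-invariant vector fields in~(\ref{eq:GM.Xj}).

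For condition~(i), I will argue that every term appearing in the expansion of $(x\cdot y)_k$ in~(\ref{eq:GM.group-law2}) is a monomial in $(x,y)$ that is $w$-homogeneous of degree $w_k$. Indeed, the quadratic terms are of the form $L_{ij}^k(a)x_iy_j$ with $w_i+w_j=w_k$, and the higher-order terms are constrained by $\brak\alpha+\brak\beta=w_k$. Therefore $(\delta_t x)\cdot (\delta_t y) = \delta_t(x\cdot y)$ for every $t\in\R$, which is exactly the statement that the anisotropic dilations~(\ref{eq:Nilpotent.dilations2}) are group automorphisms of $G(a)$.

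For condition~(ii), I need to show that the Lie algebra $TG(a)(0)$ of $G(a)$, as a Lie algebra, coincides with $\fg(a)$. A clean way is to compute the Lie bracket $[X_i^a,X_j^a](0)$ of the canonical basis of left-invariant vector fields. Using the explicit formula~(\ref{eq:GM.Xj}), $X_j^a=\partial_j+\tfrac{1}{2}\sum_{w_i+w_j=w_k}L_{ij}^k(a)x_i\partial_k+\textup{higher order}$, where the higher-order terms vanish together with their first derivatives at $x=0$. A standard computation together with the antisymmetry $L_{ji}^k(a)=-L_{ij}^k(a)$ then gives
\begin{equation*}
[X_i^a,X_j^a](0)=\sum_{w_k=w_i+w_j}L_{ij}^k(a)\,\partial_k,
\end{equation*}
and zero when $w_i+w_j>r$ since no coefficient $L_{ij}^k(a)$ with $w_k=w_i+w_j$ exists in that range. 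Under the canonical identification $TG(a)(0)\simeq\op{Span}(\partial_1,\ldots,\partial_n)$, these are precisely the defining bracket relations~(\ref{eq:Nilpotent.structure-constants-fg(a)}) of $\fg(a)$, which proves~(ii).

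The argument has no real obstacle; the only subtle point is to match the sign and normalization conventions in~(\ref{eq:GM.Xj}) with the definition of $\fg(a)$. As a cross-check, one can appeal to Proposition~\ref{prop:Carnot-coord.GM(a)-G(a)}, which identifies $G(a)$ with the tangent group $GM(a)$ via the basis $(\xi_1(a),\ldots,\xi_n(a))$; combined with Remark~\ref{rem-xixj} (i.e.\ the specialization of~(\ref{eq-jk}) at $x=a$), this yields the same structure constants $L_{ij}^k(a)$, confirming~(ii) intrinsically.
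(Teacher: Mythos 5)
Your proof is correct and follows essentially the same route as the paper: both verify conditions (i) and (ii) of Definition~\ref{def:Nilpotent.class-cN(a)} directly, with (i) read off from the homogeneity of the group law and (ii) from the fact that the vector fields $X_j^a$ satisfy the commutator relations~(\ref{eq:Nilpotent.structure-constants-Xj(a)}) and agree with $\partial_j$ at $x=0$. The only cosmetic difference is that you derive the bracket relations by an explicit computation from~(\ref{eq:GM.Xj}) (using the antisymmetry of the $L_{ij}^k(a)$), whereas the paper simply invokes that these coefficients are the structure constants of the basis $(\xi_1(a),\ldots,\xi_n(a))$ — which is exactly the cross-check you mention at the end.
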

\begin{proof}
It is immediate from~(\ref{eq:GM.group-law}) that the dilations~(\ref{eq:Nilpotent.dilations2}) are group automorphisms of $G(a)$. 
As the structure constants of the basis $(\xi_{1}(a),\ldots,\xi_{n}(a))$ are precisely given by the coefficients $L_{ij}^k(a)$, $w_i+w_j=w_k$, we see that $X_1^a,\ldots, X_n^a$ satisfy the commutator relations~(\ref{eq:Nilpotent.structure-constants-Xj(a)}). As $X_j^a(0)=\partial_j$ for $j=1,\ldots, n$, it follows that the Lie bracket of the Lie algebra $T[G(a)](0)$ is given by~(\ref{eq:Nilpotent.structure-constants-fg(a)}), and so this Lie algebra is just the Lie algebra $\fg(a)$. This shows that
 $G(a)$ is in the class $\sN_X(a)$. The proof is complete. 
\end{proof}

Proposition~\ref{prop:G(a)-sNX(a)} asserts that, under the identification given by Proposition~\ref{prop:Carnot-coord.GM(a)-G(a)}, the tangent group $GM(a)$ can be regarded as an element of the class $\sN_X(a)$. Therefore, by Corollary~\ref{cor:many.nilp-sNX(a)} it provides us with the nilpotent approximation of $(M,H)$ at $a$ in suitable privileged coordinates. The rest of the paper is devoted to the explicit construction and properties of such coordinates. 

\subsection{Carnot coordinates}
We define Carnot coordinates as follows. 

\begin{definition}
 Let $(x_{1},\ldots,x_{n})$ be privileged coordinates at $a$ adapted to $(X_{1},\ldots,X_{n})$.  We shall call $(x_{1},\ldots,x_{n})$ 
   \emph{Carnot coordinates} when, for every $j=1,\ldots, n$, the model vector of $X_j$ in these coordinates is the vector field $X_j^a$ given by~(\ref{eq:GM.Xj}). 
\end{definition}

As mentioned in Section~\ref{sec:Nilpotent-approximation}, the groups in the class $\sN_X(a)$ are uniquely determined by the canonical bases of their Lie algebras of left-invariant vector fields. Here $(X_1^a, \ldots, X_n^a)$ is the canonical basis of left-invariant vector fields on $G(a)$. Therefore, by using Corollary~\ref{cor:many.nilp-sNX(a)} we obtain the following result. 

\begin{proposition}
Let $(x_{1},\ldots,x_{n})$ be privileged coordinates at $a$ adapted to $(X_{1},\ldots,X_{n})$. Then $(x_1,\ldots, x_n)$ are Carnot coordinates if and only if in these coordinates the nilpotent approximation of $(M,H)$ at $a$ is given by the graded nilpotent graded group $G(a)$.  
\end{proposition}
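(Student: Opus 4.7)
The plan is to observe that the proposition is essentially a repackaging of the uniqueness statement for members of the class $\sN_{X}(a)$ via their canonical bases, together with Proposition~\ref{prop:G(a)-sNX(a)}. So the proof should be short and conceptual rather than computational.

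First, I would set up the correspondence between nilpotent approximations and canonical bases. Given privileged coordinates $(x_{1},\ldots,x_{n})$ at $a$ adapted to $(X_{1},\ldots,X_{n})$, the nilpotent approximation $G^{(a)}$ lies in $\sN_{X}(a)$ by Proposition~\ref{prop:Nilpotent.G(a)-fg(a)-graded}, and by Proposition~\ref{prop:Nilpotent.G(a)-tildefg(a)} its Lie algebra of left-invariant vector fields is spanned by the model vector fields $X_{1}^{(a)},\ldots,X_{n}^{(a)}$. Since each $X_{j}^{(a)}$ is homogeneous of degree $-w_{j}$ and satisfies $X_{j}^{(a)}(0)=\partial_{j}$ (by linear adaptation), the family $(X_{1}^{(a)},\ldots,X_{n}^{(a)})$ is precisely the canonical basis of $G^{(a)}$ in the sense recalled just before Lemma~\ref{lem:nilpotent.flowX}. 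On the other hand, Proposition~\ref{prop:G(a)-sNX(a)} tells us $G(a)\in \sN_{X}(a)$, and by construction its canonical basis is $(X_{1}^{a},\ldots,X_{n}^{a})$ from~(\ref{eq:GM.Xj}).

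Next, I would invoke the uniqueness statement recalled in Section~\ref{sec:Nilpotent-approximation}: a group $G\in \sN_{X}(a)$ is uniquely determined by its canonical basis of left-invariant vector fields (equivalently, by a family satisfying conditions (i)--(ii) there). Applied to $G^{(a)}$ and $G(a)$, this yields the equivalence
\begin{equation*}
G^{(a)}=G(a) \quad \Longleftrightarrow \quad X_{j}^{(a)}=X_{j}^{a} \ \text{for all $j=1,\ldots,n$}.
\end{equation*}
The right-hand side is precisely the defining condition for $(x_{1},\ldots,x_{n})$ to be Carnot coordinates, while the left-hand side is the statement that the nilpotent approximation in these coordinates is $G(a)$. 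This gives both directions of the equivalence in one stroke.

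There is no genuine obstacle, since the two key ingredients are already in place: the identification of $G^{(a)}$ via its canonical basis of model vector fields from Proposition~\ref{prop:Nilpotent.G(a)-tildefg(a)}, and the membership $G(a)\in \sN_{X}(a)$ from Proposition~\ref{prop:G(a)-sNX(a)}. The only mild subtlety worth mentioning is the verification that $(X_{1}^{(a)},\ldots,X_{n}^{(a)})$ really is the canonical basis of $G^{(a)}$ (not merely some basis of its left-invariant vector fields); this follows because linearly adapted privileged coordinates force $X_{j}^{(a)}(0)=\partial_{j}$, which is exactly the normalization defining the canonical basis. Once this is pointed out, the proof reduces to invoking Corollary~\ref{cor:many.nilp-sNX(a)} or, more directly, the uniqueness of groups in $\sN_{X}(a)$ with a prescribed canonical basis.
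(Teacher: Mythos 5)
Your proof is correct and follows essentially the same route as the paper, which deduces the proposition from the facts that groups in $\sN_X(a)$ are uniquely determined by their canonical bases of left-invariant vector fields, that $(X_1^a,\ldots,X_n^a)$ is the canonical basis of $G(a)\in\sN_X(a)$, and that $(X_1^{(a)},\ldots,X_n^{(a)})$ is the canonical basis of $G^{(a)}$. Your explicit check that the model vector fields give the canonical basis of $G^{(a)}$ is the same normalization the paper relies on, so there is no substantive difference.
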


In other words, the Carnot coordinates are precisely the privileged coordinates at $a$ adapted to $(X_{1},\ldots,X_{n})$ in which we have a natural identification between the nilpotent approximation and the tangent group $GM(a)$. 

In addition, the group $G(a)$ depends only on the structure constants $L_{ij}^k$, $w_i+w_j=w_k$ (\emph{cf.}~Remark~\ref{rmk:Carnot-coord.independence-G(a)}). Therefore, the Carnot coordinates provides us with a class of systems of privileged coordinates in which the nilpotent approximation is independent of the choice of the coordinate system in that class.  

Using Proposition~\ref{prop-pri-equiv} we obtain the following  characterization of Carnot coordinates. 

\begin{proposition}\label{prop:Carnot-criterion}
    Let $(x_{1},\ldots,x_{n})$ be local coordinates that are linearly adapted at $a$ to the $H$-frame 
    $(X_{1},\ldots,X_{n})$. Let $U\subset \R^n$ be their range. Then $(x_{1},\ldots,x_{n})$  are Carnot coordinates at $a$ if and only if, for $j=1,\ldots, n$  and as $t\rightarrow 0$, we have
               \begin{equation}
            t^{w_{j}}\delta_{t}^{*}X_{j}=X^{a}_{j}+\op{O}(t) \qquad \text{in $\cX(U)$}.
             \label{eq:Carnot.Xj(a)=Xja}
        \end{equation}    
\end{proposition}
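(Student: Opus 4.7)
The proof should be essentially a direct application of Proposition~\ref{prop-pri-equiv}, combined with the definition of Carnot coordinates and a short verification that the left-invariant vector fields $X_j^a$ on $G(a)$ are homogeneous of degree $-w_j$ with respect to the anisotropic dilations~(\ref{eq:Nilpotent.dilations2}). Since Proposition~\ref{prop-pri-equiv} characterizes privileged coordinates by an asymptotic expansion of exactly the form~(\ref{eq:Carnot.Xj(a)=Xja}) up to the identification of the leading homogeneous term with the model vector field, the main task is to recognize that~(\ref{eq:Carnot.Xj(a)=Xja}) packages together both the privileged condition and the identification of the model vector field with $X_j^a$.

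Before starting, I would record the homogeneity of $X_j^a$. Two arguments are available. The first is to read this off the explicit formula~(\ref{eq:GM.Xj}): for each term $x^\alpha \partial_k$ appearing there one has $\brak\alpha + w_j = w_k$, so under $\delta_t$ this term picks up the factor $t^{\brak\alpha - w_k} = t^{-w_j}$, and the constant term $\partial_j$ contributes $t^{-w_j}$ as well. Alternatively, since $G(a)\in \sN_X(a)$ by Proposition~\ref{prop:G(a)-sNX(a)}, the canonical basis $(X_1^a,\ldots,X_n^a)$ of its Lie algebra of left-invariant vector fields enjoys the homogeneity property~(i) recalled in the discussion preceding Corollary~\ref{cor:many.nilp-sNX(a)}.

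For the forward direction, assume $(x_1,\ldots,x_n)$ are Carnot coordinates at $a$. By definition they are privileged coordinates adapted to $(X_1,\ldots,X_n)$, so Proposition~\ref{prop-pri-equiv} yields the asymptotic $t^{w_j}\delta_t^*X_j = X_j^{(a)} + \op{O}(t)$ in $\cX(U)$, where $X_j^{(a)}$ is the model vector field of $X_j$ in these coordinates. By the very definition of Carnot coordinates, $X_j^{(a)} = X_j^a$, giving~(\ref{eq:Carnot.Xj(a)=Xja}). For the converse, assume~(\ref{eq:Carnot.Xj(a)=Xja}) holds. Since $X_j^a$ is homogeneous of degree $-w_j$ by the observation above, Proposition~\ref{prop-pri-equiv} (applied in the form (ii)$\Rightarrow$(i)) tells us that the linearly adapted coordinates $(x_1,\ldots,x_n)$ are privileged coordinates at $a$ adapted to $(X_1,\ldots,X_n)$; moreover, comparing with the asymptotic expansion of Proposition~\ref{prop:nilp-approx.vector-fields} and uniqueness of the coefficient of $t^{-w_j}$ forces the model vector field of $X_j$ to be $X_j^a$. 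Hence the coordinates are Carnot coordinates.

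There is no real obstacle here: once the homogeneity of $X_j^a$ is in hand, the proof is a book-keeping exercise that threads together the definition of Carnot coordinates with the equivalence (i)$\Leftrightarrow$(ii) in Proposition~\ref{prop-pri-equiv}. The only point that deserves care is the uniqueness of the leading term in the asymptotic expansion $\delta_t^* X_j \simeq \sum_\ell t^\ell X_j^{[\ell]}$ of Proposition~\ref{prop:nilp-approx.vector-fields}, which is what lets us identify $X_j^{(a)}$ with $X_j^a$ in the backward direction.
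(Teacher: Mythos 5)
Your proposal is correct and follows exactly the route the paper intends: the paper states this proposition as an immediate consequence of Proposition~\ref{prop-pri-equiv} together with the definition of Carnot coordinates, and your argument fills in precisely those details (including the homogeneity of $X_j^a$ read off from~(\ref{eq:GM.Xj}) and the uniqueness of the leading term in the expansion of Proposition~\ref{prop:nilp-approx.vector-fields} for the converse). Nothing is missing.
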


Let us now explain how to construct Carnot coordinates. 

\begin{lemma}\label{lem:Carnot-coord.admissibility-Xa}
For all $x\in \R^n$, we have 
 \begin{equation}
\exp\left(x_1X_1^a+\cdots +x_nX_n^a\right)=x.   
\label{eq:Carnot-coord.expXa}
\end{equation}
\end{lemma}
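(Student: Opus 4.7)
The plan is to identify $\exp(x_1 X_1^a+\cdots+x_n X_n^a)$ with the Lie group exponential of $G(a)$ applied to a suitable Lie algebra element, and then to invoke the general fact, recalled in Section~\ref{sec:Carnot-manifolds}, that for a graded nilpotent Lie group realized as its own Lie algebra via the Dynkin product, the exponential map is the identity.

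First, I would observe that $Y := x_1 X_1^a+\cdots +x_n X_n^a$ is a linear combination of left-invariant vector fields on $G(a)$, hence itself a left-invariant vector field on $G(a)$. Since $X_j^a(0)=\partial_j$ by definition, we have $Y(0)=\sum x_j\partial_j \in T_0 G(a)$. By~(\ref{eq-CH}) the quantity $\exp(Y)=\exp(tY)(0)_{|t=1}$ is exactly the value at $t=1$ of the one-parameter subgroup of $G(a)$ generated by $Y$. Thus $\exp(Y)$ coincides with the Lie group exponential of $G(a)$ applied to the Lie algebra element $Y(0)$.

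Second, I would use Proposition~\ref{prop:Carnot-coord.GM(a)-G(a)}: the coordinates defined by the graded basis $(\xi_1(a),\ldots,\xi_n(a))$ identify $G(a)$ with $GM(a)$, and hence identify $G(a)$ with $\fg M(a)$ equipped with the Dynkin product~(\ref{eq:Carnot.Dynkin-product}). Under this identification the point $x=(x_1,\ldots,x_n)\in \R^n$ corresponds to $\sum x_j\xi_j(a)\in \fg M(a)$, the unit $0\in \R^n$ corresponds to $0\in \fg M(a)$, and the tangent vector $Y(0)=\sum x_j\partial_j$ corresponds, under the natural identification $T_0\fg M(a)\simeq \fg M(a)$, to $\sum x_j\xi_j(a)$.

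Finally, I would invoke the statement recalled after~(\ref{eq:GM.inverse}): when a graded nilpotent Lie algebra is turned into a Lie group via the Dynkin product, the exponential map is the identity. Applied to $\fg M(a)$, this gives that the Lie group exponential of $GM(a)$ sends $\sum x_j\xi_j(a)$ to itself, which under the coordinate identification is just $x\in \R^n$. Combining the three steps yields $\exp(x_1 X_1^a+\cdots +x_n X_n^a)=x$, as claimed. There is no real obstacle here; the only point to be careful about is matching the flow-based definition~(\ref{eq-CH}) with the Lie group exponential, which is immediate from the left-invariance of $Y$.
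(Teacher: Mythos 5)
Your proposal is correct, but it follows a genuinely different route from the paper's proof. You reduce the lemma to the background fact recalled in Section~\ref{sec:Carnot-manifolds} --- that a graded nilpotent Lie algebra equipped with its Dynkin product is a Lie group whose exponential map is the identity --- after two legitimate bridging observations: first, since $Y=x_1X_1^a+\cdots+x_nX_n^a$ is left-invariant on $G(a)$ with $Y(0)=\sum x_j\partial_j$, the flow-based quantity $\exp(Y)$ of~(\ref{eq-CH}) is exactly the Lie group exponential of $G(a)$ at $Y(0)$; second, Proposition~\ref{prop:Carnot-coord.GM(a)-G(a)}, together with the functoriality of the exponential map under the (linear) coordinate isomorphism, transports the computation to $GM(a)=\fg M(a)$ with the Dynkin product. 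The paper instead gives a self-contained argument that never invokes that unproved remark: it shows that the map $\exp_{X^a}$ of~(\ref{eq:Carnot-coord.expXa-def}) is a group endomorphism of $G(a)$ (because the product~(\ref{eq:GM.group-law}) is a Dynkin product, via the Baker--Campbell--Hausdorff formula), that $(\exp_{X^a})'(0)=\op{id}$ so that $(\exp_{X^a})_*X=X$ for every left-invariant vector field $X$, and then uses functoriality to get $\exp_{X^a}\circ\exp_{X^a}=\exp_{X^a}$, whence $\exp_{X^a}=\op{id}$. In effect the paper re-proves, in this concrete realization, the very general statement you quote (its proof even opens by acknowledging the lemma is a special case of it); your version is shorter but leans on that statement being accepted as known background, while the paper's argument is self-contained. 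Two small remarks: your detour through $GM(a)$ could be skipped, since the group law of $G(a)$ is itself the Dynkin product of the Lie algebra structure on $\R^n$ determined by the constants $L_{ij}^k(a)$, so the background fact applies to $G(a)$ directly; and the remark you invoke appears just before~(\ref{eq:GM.inverse}), not after it --- a harmless imprecision.
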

\begin{proof}
The equality~(\ref{eq:Carnot-coord.expXa}) is a special case of a general result for Lie groups equipped with a Dynkin product~(\ref{eq:Carnot.Dynkin-product}). Let $\exp_{X^a}:\R^n\rightarrow \R^n$ be the diffeomorphism defined by 
\begin{equation}
\exp_{X^a}(x)=\exp\left(x_1X_1^a+\cdots +x_nX_n^a\right), \qquad x=(x_1,\ldots, x_n)\in \R^n. 
\label{eq:Carnot-coord.expXa-def}
\end{equation}
Then proving~(\ref{eq:Carnot-coord.expXa}) amounts to show that $\exp_{X^a}$ is the identity map of $\R^n$. Given  $x=(x_1,\ldots, x_n)$ and $y=(y_1,\ldots, y_n)$ in $\R^n$, set $X=x_1X_1^a+\cdots +x_nX_n^a$ and $Y=y_1X_1^a+\cdots +y_nX_n^a$. As the product~(\ref{eq:GM.group-law}) is a Dynkin product we have $\exp_{X^a}(x\cdot y)=\exp(X\cdot Y)$. Combining this with the Baker-Campbell-Hausdorff formula~(\ref{eq:Carnot.BCH-Formula}) gives
\begin{equation*}
 \exp_{X^a}(x)\cdot \exp_{X^a}(y)= \exp(X)\cdot \exp(Y)=\exp(X\cdot Y)=\exp_{X^a}(x\cdot y).
\end{equation*}
 This shows that $\exp_{X^a}$ is a group automorphism with respect to the group law~(\ref{eq:GM.group-law}). In view of the functoriality of the exponential map we then get
\begin{equation}
\exp_{X^a}\circ\exp_{X^a}(x)= \exp_{X^a}\left( \exp(X)\right)=\exp\left( (\exp_{X^a})_*X\right). 
\label{eq:Carnot-coord.expX(a)expX(a)}
\end{equation}

By definition $X_j^a(0)=\partial_j$ for $j=1,\ldots, n$. Combining this with the definition~\ref{eq:Carnot-coord.expXa-def} of $\exp_{X^a}$ we see that
\begin{equation*}
 \left(\exp_{X^a}\right)'(0)\partial_j= \partial_j\exp_{X^a}(0)= X_j^a(0)=\partial_j. 
\end{equation*}
It then follows that $(\exp_{X^a})'(0)=\op{id}$. As $\exp_{X^a}$ is a group automorphism, we see that $(\exp_{X^a})_*X$ is a left-invariant vector field which agrees at $x=0$ with $(\exp_{X^a})_*X(0)=(\exp_{X^a})'(0)(X(0))=X(0)$, and hence it agrees with $X$ everywhere. 
Combining this with~(\ref{eq:Carnot-coord.expX(a)expX(a)}) shows that $\exp_{X^a}\circ\exp_{X^a}(x)=\exp(X)=\exp_{X^a}(x)$, and hence $\exp_{X^a}(x)=x$. This proves~(\ref{eq:Carnot-coord.expXa}) and completes the proof. 
\end{proof}

The following result shows how to convert any system of privileged coordinates into a system of Carnot coordinates. 

\begin{theorem}\label{thm:Carnot-coord.w-hom-privileged-to-Carnot}
 Let $(x_{1},\ldots,x_{n})$ be privileged coordinates at $a$ adapted to the $H$-frame 
$(X_{1},\ldots,X_{n})$. For $j=1,\ldots, n$, let $X_j^{(a)}$ be the model vector fields of $X_j$ in these privileged coordinates. In addition, set $\phi=\exp_{X^{(a)}}^{-1}$, where $\exp_{X^{(a)}}$ is the diffeomorphism of $\R^n$ given by~(\ref{eq:many.exp_X(a)}). 
\begin{enumerate}
\item  The map $\phi(x)$ is the unique $w$-homogeneous polynomial diffeomorphism of $\R^n$ such that $\phi_*X_j^{(a)}=X_j^a$ for $j=1,\ldots, n$. 

\item The coordinate change $x\rightarrow \phi(x)$ is the unique $w$-homogeneous change of coordinates that gives rise to Carnot coordinates at $a$ adapted to $(X_{1},\ldots,X_{n})$. 
\end{enumerate}
\end{theorem}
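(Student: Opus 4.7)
\medskip

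\noindent\textbf{Proof plan.} The strategy is to deduce both statements directly from the general theory of Section~\ref{sec:Nilpotent-approximation}, specialized to the concrete group $G(a)\in \sN_X(a)$ furnished by Proposition~\ref{prop:G(a)-sNX(a)}, whose canonical basis of left-invariant vector fields is the family $(X_1^a,\ldots,X_n^a)$.

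For part (1), I would first observe that $\exp_{X^{(a)}}$ defined in~(\ref{eq:many.exp_X(a)}) is, by the explicit triangular form~(\ref{eq:Carnot-coord.xk(1)}), a $w$-homogeneous polynomial diffeomorphism with $\exp_{X^{(a)}}'(0)=\op{id}$. Applying Proposition~\ref{lem:Carnot-coord.inverse-Ow} (with $\hat\phi=\exp_{X^{(a)}}$) then shows that $\phi=\exp_{X^{(a)}}^{-1}$ is itself a $w$-homogeneous polynomial diffeomorphism. Next, I would compare $\phi$ with the map $\phi_Y$ of~(\ref{eq:Nilpotent.phiY}) in the particular case $Y=X^a$, namely
\begin{equation*}
\phi_{X^a}=\exp_{X^a}\circ\exp_{X^{(a)}}^{-1}.
\end{equation*}
Lemma~\ref{lem:Carnot-coord.admissibility-Xa} asserts that $\exp_{X^a}$ is the identity map of $\R^n$, so that $\phi_{X^a}=\phi$. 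The uniqueness part of the lemma preceding Proposition~\ref{prop:Nilpotent.coord-G(a)G} then immediately yields part (1): $\phi$ is the unique $w$-homogeneous diffeomorphism such that $\phi_*X_j^{(a)}=X_j^a$ for $j=1,\ldots,n$.

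For part (2), I would apply Proposition~\ref{prop:Nilpotent.coord-G(a)G} with $G=G(a)$. It asserts that the unique $w$-homogeneous change of coordinates transforming the given privileged coordinates into privileged coordinates at $a$ adapted to $(X_1,\ldots,X_n)$ in which the nilpotent approximation is $G(a)$ is exactly $\phi_{X^a}$, which by part (1) is our $\phi$. By definition of Carnot coordinates together with Proposition~\ref{prop:G(a)-sNX(a)} and the characterization of systems of privileged coordinates giving a prescribed nilpotent approximation, being a Carnot coordinate system is equivalent to the nilpotent approximation being $G(a)$. This yields the desired conclusion.

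The only conceptual subtlety is keeping straight two distinct exponential maps on $\R^n$: the map $\exp_{X^{(a)}}$ attached to the homogeneous model algebra $\tilde{\fg}^{(a)}$ generated by the $X_j^{(a)}$, and the Dynkin exponential $\exp_{X^a}$ attached to the tangent-group realization $G(a)$ with its left-invariant basis $X_j^a$. Lemma~\ref{lem:Carnot-coord.admissibility-Xa} is the bridge between them: once one knows that the Dynkin exponential on $G(a)$ collapses to the identity, the abstract map $\phi_{X^a}$ reduces to the concrete $\exp_{X^{(a)}}^{-1}$, and the uniqueness machinery of Section~\ref{sec:Nilpotent-approximation} delivers both assertions with no further computation.
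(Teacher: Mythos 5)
Your proposal is correct and follows essentially the same route as the paper: identify $\phi=\exp_{X^{(a)}}^{-1}$ with the map $\phi_{X^a}$ of~(\ref{eq:Nilpotent.phiY}) via Lemma~\ref{lem:Carnot-coord.admissibility-Xa} ($\exp_{X^a}=\op{id}$), then invoke the uniqueness statements of Section~\ref{sec:Nilpotent-approximation} — the lemma preceding Proposition~\ref{prop:Nilpotent.coord-G(a)G} for part (1) and Proposition~\ref{prop:Nilpotent.coord-G(a)G} applied to $G=G(a)$ with canonical basis $(X_1^a,\ldots,X_n^a)$ for part (2). The only difference is cosmetic extra detail (checking directly that $\phi$ is a $w$-homogeneous polynomial diffeomorphism), which the cited lemma already provides.
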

\begin{proof}
We know by Lemma~\ref{lem:Carnot-coord.admissibility-Xa} that $\exp_{X^a}=\op{id}$. Thus, in the notation of Proposition~\ref{prop:Nilpotent.coord-G(a)G} we have $\phi_{X^a}=\exp_{X^{(a)}}^{-1}=\phi$. Therefore, the result follows by applying Proposition~\ref{prop:Nilpotent.coord-G(a)G} to $G=G(a)$ and the canonical basis $(X^a_1, \ldots, X^a_n)$. 
 \end{proof}

Proposition~\ref{prop:char-priv.phi-hatphi-Ow} shows how to get all systems of privileged coordinates at a given point. We have the following version of this result for Carnot coordinates. 

\begin{proposition}\label{prop:char-Carnot-coord.phi-x-Ow}
 Let $(x_1,\ldots, x_n)$ be Carnot coordinates at $a$ adapted to the $H$-frame $(X_1,\ldots, X_n)$. Then a change of coordinates $x\rightarrow \phi(x)$ produces Carnot coordinates at $a$ adapted to $(X_1,\ldots, X_n)$ if and only if we have 
\begin{equation}
 \phi(x)=x +\Ow\left(\|x\|^{w+1}\right)  \qquad \text{near $x=0$}. 
 \label{eq:char-Carnot-coord.phi-x-Ow}
\end{equation}
\end{proposition}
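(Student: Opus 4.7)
The plan is to deduce this characterization by specializing Proposition~\ref{prop:Nilpotent.coord-G(a)G} to the tangent group $G=G(a)$. That earlier result characterizes the changes of coordinates that produce privileged coordinates at $a$ adapted to $(X_1,\ldots,X_n)$ whose nilpotent approximation is a prescribed group $G\in\sN_X(a)$. By the discussion preceding this proposition (combining Proposition~\ref{prop:G(a)-sNX(a)} with Corollary~\ref{cor:many.nilp-sNX(a)}), Carnot coordinates are exactly the privileged coordinates in which the nilpotent approximation is $G(a)$. Thus both directions of the iff statement should drop out of Proposition~\ref{prop:Nilpotent.coord-G(a)G} once the associated map $\phi_Y$ is computed in this specific setting.

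Concretely, the hypothesis that $(x_1,\ldots,x_n)$ are Carnot coordinates adapted to $(X_1,\ldots,X_n)$ means that in these coordinates the model vector fields of $X_j$ are $X_j^{(a)}=X_j^a$ for $j=1,\ldots,n$. On the other hand, by construction $(X_1^a,\ldots,X_n^a)$ is the canonical basis of left-invariant vector fields on $G(a)$, so, when Proposition~\ref{prop:Nilpotent.coord-G(a)G} is applied with $G=G(a)$, the basis $(Y_1,\ldots,Y_n)$ in its statement is precisely $(X_1^a,\ldots,X_n^a)$. Consequently the map $\phi_Y$ defined by~(\ref{eq:Nilpotent.phiY}) becomes
\begin{equation*}
\phi_Y=\exp_{X^a}\circ \exp_{X^a}^{-1}.
\end{equation*}
Invoking Lemma~\ref{lem:Carnot-coord.admissibility-Xa}, which states that $\exp_{X^a}=\op{id}$, we obtain $\phi_Y=\op{id}$.

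With this identification in hand, the characterization~(\ref{eq:nilp-approx.phi-phiY-Ow}) of Proposition~\ref{prop:Nilpotent.coord-G(a)G} reads: a change of coordinates $x\to\phi(x)$ produces Carnot coordinates at $a$ adapted to $(X_1,\ldots,X_n)$ if and only if $\phi(x)=x+\Ow(\|x\|^{w+1})$ near $x=0$, which is exactly~(\ref{eq:char-Carnot-coord.phi-x-Ow}). There is no substantive obstacle; the work consists entirely in matching the data of Proposition~\ref{prop:Nilpotent.coord-G(a)G} to the present setup and invoking Lemma~\ref{lem:Carnot-coord.admissibility-Xa} to collapse $\phi_Y$ to the identity.
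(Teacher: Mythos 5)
Your argument is correct and follows essentially the same route as the paper: the paper likewise specializes Proposition~\ref{prop:Nilpotent.coord-G(a)G} to $G=G(a)$ with canonical basis $(X_1^a,\ldots,X_n^a)$, uses the hypothesis that the given coordinates are Carnot to get $X_j^{(a)}=X_j^a$, and invokes Lemma~\ref{lem:Carnot-coord.admissibility-Xa} to conclude $\phi_{X^a}=\op{id}$, from which~(\ref{eq:char-Carnot-coord.phi-x-Ow}) follows. The only cosmetic difference is that you collapse $\phi_Y=\exp_{X^a}\circ\exp_{X^a}^{-1}$ directly, whereas the paper first writes $\phi_{X^a}=\exp_{X^{(a)}}^{-1}$ and then identifies it with $\exp_{X^a}^{-1}=\op{id}$; the content is identical.
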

\begin{proof}
As mentioned in the proof of Theorem~\ref{thm:Carnot-coord.w-hom-privileged-to-Carnot}, in the notation of Proposition~\ref{prop:Nilpotent.coord-G(a)G} the map $\phi_{X^a}$ is just $\exp_{X^{(a)}}^{-1}$. 
Furthermore, as $(x_1,\ldots, x_n)$ are Carnot coordinates we have $X_j^{(a)}=X_j^a$ for $j=1,\ldots, n$, and so by Lemma~\ref{lem:Carnot-coord.admissibility-Xa} we have $\phi_{X^a}=\exp_{X^{a}}^{-1}=\op{id}$. 
Therefore, by using Proposition~\ref{prop:Nilpotent.coord-G(a)G} we see that a coordinate change $x\rightarrow \phi(x)$ yields Carnot coordinates at $a$ adapted to $(X_1,\ldots, X_n)$ if and only if $\phi(x)=\phi_{X^a}(x) +\Ow(\|x\|^{w+1})=x +\Ow(\|x\|^{w+1})$ near $x=0$. The result is  proved. 
\end{proof}

In what follows, we let $(x_{1},\ldots,x_{n})$ be privileged coordinates at $a$ adapted to the $H$-frame 
$(X_{1},\ldots,X_{n})$. Let $U$ be the range of these coordinates. We know by~(\ref{eq:Nilpotent.Xj-linearly-adapted}) that in these coordinates $X_j=\partial_{x_j} + \sum_{k=1}^n b_{jk}(x) \partial_{x_k}$, with $b_{jk}(x)\in C^\infty(U)$ such that $b_{jk}(0)=0$. 
The diffeomorphism $\phi(x)=\exp_{X^{(a)}}^{-1}(x)$ from Theorem~\ref{thm:Carnot-coord.w-hom-privileged-to-Carnot} is then determined effectively from the coefficients $b_{jk}(x)$. More precisely, we have the following result. 

\begin{proposition}\label{prop:Carnot-coord.form-phi}
Set $\phi(x)= (\phi_1(x),\ldots, \phi_n(x))$. Then, for $k=1,\ldots, n$, we have
 \begin{equation}
   \phi_{k}(x)=x_{k} -\frac{1}{4}\sum_{w_i+w_j=w_k} (\partial_i b_{jk}(0)+\partial_j b_{ik}(0))x_ix_j +\sum_{\substack{\brak\alpha=w_{k}\\|\alpha|\geq 
    3}}c_{k\alpha}x^{\alpha}, 
     \label{eq:Carnot.eps-cjalpha}
 \end{equation}where $c_{k\alpha}$ is a universal polynomial in the partial derivatives $\partial^{\beta}b_{jl}(0)$ with  
 $w_{j}+\brak\beta=w_{l}\leq w_{k}$ and $\beta\neq 0$ (\emph{cf}.\ Eq.~(\ref{eq:Carnot-coord.ckalpha}) \emph{infra}). The quadratic (resp., super-quadratic) terms only appear when $w_k\geq 2$ (resp., $w_k\geq 3$). 
 \end{proposition}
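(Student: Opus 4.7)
The plan is to read off the Taylor expansion of $\exp_{X^{(a)}}(\xi)$ from the flow formulas established in Lemma~\ref{lem:nilpotent.flowX} and equation~(\ref{eq:Carnot-coord.xk(1)}), and then invert. Since $\exp_{X^{(a)}}$ is a $w$-homogeneous polynomial diffeomorphism of $\R^n$ whose differential at the origin is the identity, Proposition~\ref{lem:Carnot-coord.inverse-Ow} (applied in fact at the polynomial level, so no remainder is needed) guarantees that $\phi = \exp_{X^{(a)}}^{-1}$ is itself a $w$-homogeneous polynomial diffeomorphism, and that $\phi_{k}(x)$ is $w$-homogeneous of degree~$w_k$. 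In particular $\phi_k$ must be a $\R$-linear combination of monomials $x^\alpha$ with $\langle\alpha\rangle=w_k$, which forces the given overall shape with $\phi_k(x)=x_k+\cdots$ and only quadratic-or-higher correction terms when $w_k\ge 2$, and only cubic-or-higher correction terms when $w_k\ge 3$.

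To pin down the quadratic coefficients, I would apply the recursive formulas~(\ref{eq:nilpotent.Gammakalphbeta-recursive1})--(\ref{eq:nilpotent.Gammakalphbeta-recursive2}) with initial data $y=0$ and compute the flow $x(t)=\exp(tX)(0)$ with $X=\sum\xi_j X_j^{(a)}$ up to second order in $\xi$. For $w_k\ge 2$ the only quadratic contribution to $x_k(1)$ comes from the summands indexed by $w_j+\langle\alpha\rangle=w_k$ with $|\alpha|=1$, that is $\alpha=\epsilon_i$ with $w_i+w_j=w_k$ and $w_i,w_j<w_k$. For those indices $b_{jk\alpha}=\partial_i b_{jk}(0)$, and the integrand is just $x_i(s)=s\xi_i+O(\xi^2)$ (since $w_i<w_k$ the higher-order terms do not contribute to the quadratic part). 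Thus
\begin{equation*}
\exp_{X^{(a)}}(\xi)_{k}=\xi_k+\tfrac12\sum_{w_i+w_j=w_k}\partial_i b_{jk}(0)\,\xi_i\xi_j+O(|\xi|^3).
\end{equation*}
Symmetrizing in $(i,j)$ gives $\tfrac14\sum_{w_i+w_j=w_k}(\partial_i b_{jk}(0)+\partial_j b_{ik}(0))\xi_i\xi_j$. Inverting a near-identity map of the form $\xi\mapsto \xi+Q(\xi)+O(|\xi|^3)$ produces $x\mapsto x-Q(x)+O(|x|^3)$, yielding exactly the quadratic coefficient claimed for $\phi_k$.

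For the higher-order terms, the same recursive formula (\ref{eq:nilpotent.Gammakalphbeta-recursive2}) expresses each component $x_k(1)$ by integrating products of earlier components $x_l(s)$ with $w_l<w_k$, multiplied by factors $b_{jk\alpha}=(\alpha!)^{-1}\partial^\alpha b_{jk}(0)$ with $w_j+\langle\alpha\rangle=w_k$. Induction on $w_k$ then shows that each coefficient $\hat c_{k\alpha}$ in~(\ref{eq:Carnot-coord.xk(1)}) is a universal polynomial in the $\partial^{\beta}b_{jl}(0)$ with $w_j+\langle\beta\rangle=w_l\le w_k$ and $\beta\neq 0$. Inversion preserves this universality class: the coefficients $c_{k\alpha}$ of $\phi_k=\exp_{X^{(a)}}^{-1}$ are obtained by polynomial elimination from the $\hat c_{l\beta}$ with $w_l\le w_k$, hence remain universal polynomials in the same partial derivatives $\partial^{\beta}b_{jl}(0)$.

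The main obstacle is simply bookkeeping: one must justify carefully that only the data $\partial^{\beta}b_{jl}(0)$ with $w_l\le w_k$ and $\beta\neq 0$ can enter, and that there is no contribution from terms with $w_l>w_k$ or from the value $b_{jl}(0)=0$. This is ensured by the weighted-triangular structure of~(\ref{eq:nilpotent.Gammakalphbeta-recursive1})--(\ref{eq:nilpotent.Gammakalphbeta-recursive2}), since each integration step strictly increases the weight and $b_{jl}(0)=0$ so the constant-order derivative never survives. Once this bookkeeping is clarified, the stated form of $\phi_k$ follows at once.
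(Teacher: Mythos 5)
Your proposal is correct and takes essentially the same route as the paper's proof: the shape of $\phi_k$ comes from $w$-homogeneity together with $\phi'(0)=\op{id}$, the quadratic coefficient from a second-order computation of the flow of $\sum\xi_jX_j^{(a)}$ via~(\ref{eq:nilpotent.Gammakalphbeta-recursive1})--(\ref{eq:nilpotent.Gammakalphbeta-recursive2}) followed by a near-identity inversion, and the universality of the higher coefficients from Lemma~\ref{lem:nilpotent.flowX} together with inversion and induction on $w_k$, exactly as in the paper. One wording slip only: ``only cubic-or-higher correction terms when $w_k\ge 3$'' should read ``super-quadratic terms occur only when $w_k\ge 3$'', since, as your own computation shows, quadratic terms do appear for $w_k\ge 3$.
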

 \begin{proof}
 By the first part of Theorem~\ref{thm:Carnot-coord.w-hom-privileged-to-Carnot}, the map $\phi(x)$ is a $w$-homogeneous polynomial map such that $\phi'(0)=\op{id}$. Thus, its components $\phi_k(x)$, $k=1,\ldots, n$, must be of the form, 
\begin{equation}
 \phi_{k}(x)=x_{k}+\sum_{\substack{\brak\alpha=w_{k}\\|\alpha|\geq 
    2}}c_{k\alpha}x^{\alpha}, \qquad c_{k\alpha}\in \R.
    \label{eq:Carnot-coord.form-phik}
\end{equation}
 In particular, we see that $\phi_k(x)=x_k$ when $w_k=1$, and there is no super-quadratic terms when $w_k=2$. Therefore, the bulk of the proof is to relate the coefficients $c_{k\alpha}$ to the partial derivatives $\partial^{\beta}b_{jl}(0)$. Recall that thanks to~(\ref{eq-homo-app}) these partial derivatives allows us to compute the model vector fields $X_j^{(a)}$, $j=1,\ldots, n$. 
 
 Let $x\in \R^n$ and set $X=\phi_{1}(x)X_{1}^{(a)}+\cdots + \phi_{n}(x)X_{n}^{(a)}$. As $x=\exp_{X^{(a)}}\left(\phi(x)\right)= \exp(X)$, using 
Lemma~\ref{lem:nilpotent.flowX} we obtain
\begin{equation*}
x_k= \phi_k(x)+  
\sum_{\substack{\brak\alpha=w_{k}\\ |\alpha|\geq 2}} \hat{c}_{k\alpha} \phi(x)^\alpha, \qquad k=1,\ldots, n, 
\end{equation*}
where $\hat{c}_{k\alpha}$ is  a universal polynomial in the partial derivatives $\partial^\beta b_{jl}(0)$ with  $w_{j}+\brak\beta=w_{l}\leq w_{k}$ and $w_j<w_l$. 
We also observe that if $\brak\alpha=w_{k}$ and $|\alpha|\geq 2$, then $\phi(x)^\alpha$ only involve components $\phi_l(x)$ with $w_l<w_k$. Thus, 
\begin{equation}
 \phi_k(x)=x_k - \sum_{\substack{\brak\alpha=w_{k}\\ |\alpha|\geq 2}} \hat{c}_{k\alpha} \prod_{w_l<w_k} \biggl( x_l + \sum_{\substack{\brak \beta=w_l\\ |\beta|\geq 2}} 
 c_{l\beta}x^\beta\biggr)^{\alpha_l}. 
 \label{eq:Carnot-coord.ckalpha}
\end{equation}
Comparing this to~(\ref{eq:Carnot-coord.form-phik}) enables us to express the coefficients $c_{k\alpha}$ as polynomials in the coefficients $\hat{c}_{k\gamma}$ with $\brak \gamma =w_k$ and $\gamma\neq 0$ and the coefficients $c_{l\beta}$ with $\brak\beta=w_l<w_k$ and $\beta\neq 0$. 
A simple induction on $w_k$ then shows that each coefficient $c_{k\alpha}$ is a universal polynomial in the partial derivatives $\partial^\beta b_{jl}(0)$ with $w_j+\brak\beta=w_l\leq w_k$ and $w_j<w_k$. 

It remains to compute the quadratic part of $\phi_k(x)$ when $w_k\geq 2$. Using~(\ref{eq:Carnot-coord.ckalpha}) we get
\begin{equation}
 \phi_k(x) =x_k - \sum_{\substack{\brak\alpha=w_{k}\\ |\alpha|= 2}} \hat{c}_{k\alpha}x^\alpha +R_k(x),
 \label{eq:Carnot-coord.phik-quadratic}
\end{equation}
where $R_k(x)$ is a super-quadratic polynomial. Therefore, in order to determine the quadratic part of the $\phi_k(x)$ we only have to compute 
 the coefficients $\hat{c}_{k\alpha}x^\alpha$ with $\brak\alpha=w_k$ and $|\alpha|=2$. \\
 
Let $\xi \in \R^n$. For $t\in \R$ set $x(t)=\exp(tX)(0)$, with $X=\xi_1X_1^{(a)}+\cdots +  \xi_nX_n^{(a)}$. It follows from Lemma~\ref{lem:nilpotent.flowX} that we have
 \begin{equation}
x_k(t)=t\xi_k + \sum_{\substack{\brak\alpha=w_k\\ |\alpha|=2}} \hat{c}_{k\alpha}\xi^\alpha t^2 +y_k(t),
\label{eq:Carnot.flowXk-yk}
\end{equation}
where $y_k(t)$ is a super-quadratic polynomial in $t$. Differentiating this with respect to $t$ gives
\begin{equation}
\dot{x}_k(t)=\xi_k + 2\sum_{\substack{\brak\alpha=w_k\\ |\alpha|=2}} \hat{c}_{k\alpha}\xi^\alpha t +\dot{y}_k(t). 
\label{eq:Carnot-coord.dotxk-affine}
\end{equation}
Moreover, using~(\ref{eq:Carnot-coord.ODE-system}) and~(\ref{eq:Carnot.flowXk-yk}) we obtain 
\begin{equation*}
\dot{x}_k(t)= \xi_{k}+ \sum_{\substack{w_{j}+\brak\alpha=w_{k}\\
     w_k>w_j}} \xi_{k} b_{jk\alpha} \prod_{w_l<w_k} x_l(t)^{\alpha_l}= 
\xi_k +  \sum_{\substack{w_{j}+\brak\alpha=w_{k}\\
     |\alpha|=1}} b_{jk\alpha} \xi^{\alpha} \xi_{j}t+ z_k(t),
\end{equation*}
where  we have set $b_{jk\alpha}=(\alpha !)^{-1}\partial^\alpha b_{jk}(0)$ and $z_k(t)$ is a super-linear polynomial in $t$. 
Comparing this to~(\ref{eq:Carnot-coord.dotxk-affine}) we see that 
\begin{equation*}
\sum_{\substack{\brak\alpha=w_k\\ |\alpha|=2}} \hat{c}_{k\alpha}\xi^\alpha= \frac{1}{2}\! \sum_{\substack{w_{j}+\brak\alpha=w_{k}\\
     |\alpha|=1}}  b_{jk\alpha} \xi^{\alpha}\xi_{j}=\frac{1}{4}\! \sum_{w_i+w_j=w_k} \left( \partial_ib_{jk}(0)+ \partial_jb_{ik}(0)\right) \xi_i\xi_j. 
\end{equation*}
Substituting $x$ for $\xi$ and using~(\ref{eq:Carnot-coord.phik-quadratic}) then shows that the quadratic part of $\phi_k(x)$ is exactly as given in~(\ref{eq:Carnot.eps-cjalpha}). The proof is complete. 
 \end{proof}

\subsection{Examples of Carnot coordinates} \label{subsec:examples-Carnot-coord}
Let us now look at the construction of Carnot coordinates in some examples. We also refer to  Section~\ref{sec:normal-group} for the description of $\varepsilon$-Carnot coordinates on graded nilpotent Lie groups.

\subsubsection*{A. The Step 1 case} Suppose that $r=1$ so that $H_1=H_r=TM$. Let $X_1, \ldots, X_n$ be a tangent frame near a point $a\in M$. In this case, as a Lie algebra bundle $\fg M=TM$ and as a group bundle $GM=TM$. For $j=1,\ldots, n$, set $\xi_j=X_j(a)$. Then $(\xi_1,\ldots, \xi_n)$ is a basis of $\fg M(a)$ and with respect to the coordinates defined by this basis $\fg M(a)$ and $GM(a)$ are identified with $\R^n$ as Lie algebra and Lie group, respectively. In particular, for $j=1,\ldots, n$, the left-invariant vector fields $X_j^a$ is just the partial derivative $\partial_j$. 

Given local coordinates near $a$, set $X_j=\sum b_{jk}(x)\partial_k$ and $B_X(a)=(b_{jk}(a))_{1\leq j,k\leq n}$. The affine map $T_ax=(B_X(a)^t)^{-1}(x-a)$ provides us with linearly adapted coordinates at $a$, so that in these coordinates $X_j(0)=\partial_j$ for $j=1,\ldots, n$. The dilations~(\ref{eq:Nilpotent.dilations2}) are given by the usual scalar multiplication, and so in these coordinates $X_j^{(a)}=X_j(0)=\partial_j=X_j^a$. Therefore, these coordinates are Carnot coordinates at $a$. 

\subsubsection*{B. Some step 2 examples} Suppose now that $(M^{d+1}, H)$ is a Heisenberg manifold in the sense of~\cite{BG:CHM}. In this case $H\subset TM$ is a hyperplane bundle, and the Carnot manifold structure is defined by the Carnot filtration $(H, TM)$, which has type $(d,d+1)$. The Lie group structure on the fibers of $GM$ is encoded by the Levi form,
 \begin{equation}
 \cL: H\times H \longrightarrow TM/H,
 \label{eq:tangent-group.Levi-form} 
\end{equation}
which is the bilinear map $\cL_{w_1,w_2}$ of Lemma~\ref{lem-dep} for $w_1=w_2=1$. Given $a\in M$, the group law on $GM(a)=H(a)\oplus (TM(a)/H(a))$ is such that, for all $\xi, \eta \in H(a)$ and $\xi_0,\eta_0 \in TM(a)/H(a)$, we have 
 \begin{equation}
( \xi+\xi_0)\cdot (\eta+\eta_0)= \xi+\eta+ \xi_0 +\eta_0 + \frac{1}{2} \cL(\xi,\eta). 
\label{eq:GM.Heisenberg-mfld-group-law}
\end{equation}

Let $(X_1,\ldots, X_d, T)$ be an $H$-frame near $x=a$,  so that $(X_1,....,X_d)$ is a local frame of $H$ and $T$ is a transverse vector field. Near $x=a$, there are smooth functions $L_{jk}(x)$, $j,k=1,\ldots, d$, such that
\begin{equation*}
[X_j,X_k](x)=L_{jk}(x)T(x) \qquad \bmod H.  
\end{equation*}
For $j=1,\ldots, d$ set $\xi_j=X_j(a)$. In addition, let $\tau$ be the class of $T(a)$ in $TM(a)/H(a)$. Then the Levi form~(\ref{eq:tangent-group.Levi-form}) is such that
\begin{equation}
\cL(\xi_i,\xi_j)=L_{ij}(a)\tau, \qquad  \text{for $i,j=1,\ldots, d$}. 
\label{eq:Carnot-coord.Levi-form-XjXk}
\end{equation}
Let us denote by $(x_1,\ldots, x_d,t)$ the coordinates on $\fg M(a)$ and $GM(a)$ defined by the basis $(\xi_1,\ldots, \xi_d, \tau)$. Combining~(\ref{eq:GM.Heisenberg-mfld-group-law}) and~(\ref{eq:Carnot-coord.Levi-form-XjXk}) shows that, in these coordinates, the group law of $GM(a)$ is given by
\begin{equation}
(x_1,\ldots, x_d,t)\cdot (x_1',\ldots, x_d',t')=\biggl(x_1+x_1', \ldots, x_d+x_d', t+t'+\frac{1}{2}L(a)(x,x')\biggr), 
\label{eq:GM.Heisenberg-mfld.group-law}
\end{equation}
where we have set $L(a)(x,x')=\sum L_{ij}(a)x_ix_j$. 
In particular, the left-invariant vector fields $T^a$ and $X_1^a,\ldots, X_d^a$ are given by 
\begin{equation}
T^a=\partial_t, \qquad X_j^a= \partial_j +\frac12  \sum_{1\leq i \leq d} L_{ij}(a)x_i\partial_t, \quad 1\leq j \leq d.
 \label{eq:Carnot-coord.Xja-Heisenberg-mfld}
\end{equation}

Bearing this in mind, starting with local coordinates near $a$, the affine change of variable $x\rightarrow T_ax$ provides us with local coordinates $(x_1,\ldots, x_d,t)$ that are linearly adapted at $a$. Thus, in these coordinates $X_j=\sum b_{jk}(x,t)\partial_{x_k}+ c_j(x,t)\partial_t$, $j=1,\ldots,d$, where the coefficients $b_{jk}(x,t)$ and $c_j(x,t)$ are smooth functions of the variables $(x_1,\ldots, x_d,t)$ near the origin such that $b_{jk}(0,0)=\delta_{jk}$ and $\tilde{b}_j(0,0)=0$. We then have 
\begin{equation}
T^{(a)}=\partial_t, \qquad X_j^{(a)}= \partial_{x_j} + \sum_{1\leq i \leq d} c_{ji}x_i\partial_t, \quad 1\leq j \leq d, 
\label{eq:Carnot-coord.Xj(a)-Heisenberg-mfld}
\end{equation}
where we have set $c_{ji}= \partial_{x_i}c_j(0,0)$.  Here $T^{(a)}$ is homogeneous of degree $-2$ and $X_1^{(a)}, \ldots, X_d^{(a)}$ are homogeneous of degree~$-1$. Combining this with Proposition~\ref{prop-pri-equiv} shows that we are dealing with privileged coordinates. Thus, in this case $\psi_a=\op{id}$.
Moreover, as $r=2$ the components of $\phi(x):=\exp_{X^{(a)}}^{-1}(x)$ only involve linear and quadratic terms. In fact, it follows from~(\ref{eq:Carnot.eps-cjalpha}) and~(\ref{eq:Carnot-coord.Xj(a)-Heisenberg-mfld}) that we have
\begin{equation}
\phi(x_1,\ldots,x_d,t)= \biggl( x_1,\ldots, x_d, t- \frac{1}{4} \sum_{1\leq i,j\leq d} (c_{ij}+c_{ji})x_ix_j\biggr). 
\label{eq:Carnot-coord.phi-r2}
\end{equation}
All this shows that, with $\phi(x)$ and $T_ax$ as above, the change of coordinates $x\rightarrow \phi(T_ax)$ provides us with Carnot coordinates at $a$ adapted to 
$(X_1,\ldots, X_n)$. 

Assume further that $n=2m+1$ and $(M,H)$ is a contact manifold. Near $a$ we thus have a contact $1$-form $\theta$ such that $\ker \theta =H$. Let $(x_1,\ldots, x_{2m},t)$ be Darboux coordinates centered at $a$ such that
\begin{equation*}
\theta = dt + \frac12 \sum_{1\leq j \leq m} (x_{m+j}\partial_j - x_j\partial_{m+j}). 
\end{equation*}
This is the standard left-invariant contact form on the Heisenberg group $\bH^{2m+1}$, i.e., $\R^{2m+1}$ equipped with the group law~(\ref{eq:GM.Heisenberg-mfld.group-law}) associated with the antisymmetric bilinear form,
\begin{equation*}
 L(x,x')= \sum_{j=1}^m  (x_{j}x'_{m+j}-x_{m+j}x_{j}'). 
\end{equation*}
The canonical basis of the Lie algebra of left-invariant vector fields  on $\bH^{2m+1}$ is given by
    \begin{equation}
      T=\partial_{t}, \quad X_{j}=\partial_{x_{j}}-\frac{1}{2}x_{m+j}\partial_{t}, \quad X_{m+j}=\partial_{x_{m+j}}+\frac{1}{2}x_{j} 
      \partial_{t},  \quad j=1,\ldots,m.  
        \label{eq:many.Heisenberg-Lie-algebra-basis}  
    \end{equation}
We see from~(\ref{eq:Carnot-coord.Xja-Heisenberg-mfld}) that $T^a=T$ and $X_j^a=X_j$, $j=1,\ldots, 2m$, so that $G(a)=\bH^{2m+1}$. Moreover, as $T$ is homogeneous of degree~$-2$ and $X_1, \ldots, X_{2m}$ are homogeneous of degree~$-1$, we also see that $T^{(a)}=T=T^a$ and $X_j^{(a)}=X_j=X_j^a$. This shows that $(x_1, \ldots,x_m,t)$ are Carnot coordinates at $a$. Therefore, we arrive at the following result. 

\begin{proposition}\label{prop:Carnot-coor.Darboux-coord}
 Suppose that $(M,H)$ is a contact manifold. Given any point $a\in M$, all Darboux coordinates centered at $a$ are Carnot coordinates with respect to the $H$-frame~(\ref{eq:many.Heisenberg-Lie-algebra-basis}). 
\end{proposition}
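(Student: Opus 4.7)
My plan is to apply the characterization of Carnot coordinates given by Proposition~\ref{prop:Carnot-criterion}: it suffices to show that Darboux coordinates are linearly adapted at $a$ to the $H$-frame~(\ref{eq:many.Heisenberg-Lie-algebra-basis}) and that, as $t\to 0$, we have $t^{w_j}\delta_t^*X_j=X_j^{a}+\op{O}(t)$ and $t^{2}\delta_t^*T=T^{a}+\op{O}(t)$, where $T^a$ and $X_j^a$ are the left-invariant vector fields on the tangent group $G(a)$. The argument naturally splits into three short verifications.

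First, I would check that Darboux coordinates are linearly adapted. By Definition~\ref{def-lin-adp} this amounts to $T(0)=\partial_t$ and $X_j(0)=\partial_{x_j}$ for $j=1,\ldots,2m$, which is immediate from the explicit formulas~(\ref{eq:many.Heisenberg-Lie-algebra-basis}). Next I would observe that in the dilations~(\ref{eq:Nilpotent.dilations2}) the coordinate $t$ has weight $2$ and the $x_j$'s have weight $1$, and inspection of~(\ref{eq:many.Heisenberg-Lie-algebra-basis}) shows that $T$ is homogeneous of degree $-2$ and each $X_j$ is homogeneous of degree $-1$ in the sense of~(\ref{eq:priv.homog-vecto-field}). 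The anisotropic expansion of Proposition~\ref{prop:nilp-approx.vector-fields} then collapses to a single term, yielding $T^{(a)}=T$ and $X_j^{(a)}=X_j$ exactly (not merely up to an $\op{O}(t)$ remainder).

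Finally, I would identify $T^a$ and $X_j^a$ using the discussion preceding the statement. The only nonvanishing bracket of the frame is $[X_j,X_{m+j}]=T$, which via~(\ref{eq:Carnot-coord.Levi-form-XjXk}) means that the Levi form $\cL$ at $a$ corresponds to the antisymmetric form $L(x,x')=\sum_{j=1}^m(x_jx_{m+j}'-x_{m+j}x_j')$. Substituting this $L$ into the group law~(\ref{eq:GM.Heisenberg-mfld.group-law}) recovers the Heisenberg group law on $\bH^{2m+1}$, so that $G(a)=\bH^{2m+1}$, and then~(\ref{eq:Carnot-coord.Xja-Heisenberg-mfld}) yields $T^a=\partial_t=T$ and $X_j^a=\partial_{x_j}\pm\tfrac12 x_{m\mp j}\partial_t=X_j$. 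Combining this with the previous step gives $X_j^{(a)}=X_j=X_j^a$ and $T^{(a)}=T=T^a$, and the hypothesis of Proposition~\ref{prop:Carnot-criterion} is satisfied. There is no genuine obstacle in this argument: everything reduces to the homogeneity of the standard Heisenberg frame under the anisotropic dilations and a direct identification of $G(a)$ with $\bH^{2m+1}$ via the Darboux bracket relations.
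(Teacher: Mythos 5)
Your proof is correct and follows essentially the same route as the paper: both arguments observe that the frame~(\ref{eq:many.Heisenberg-Lie-algebra-basis}) is linearly adapted and homogeneous of the appropriate degrees (so $T^{(a)}=T$, $X_j^{(a)}=X_j$ exactly), identify $G(a)$ with $\bH^{2m+1}$ via the bracket relation $[X_j,X_{m+j}]=T$ and formula~(\ref{eq:Carnot-coord.Xja-Heisenberg-mfld}), and conclude $X_j^{(a)}=X_j^a$, $T^{(a)}=T^a$ using the criterion of Proposition~\ref{prop:Carnot-criterion}. The only blemish is the shorthand $X_j^a=\partial_{x_j}\pm\tfrac12 x_{m\mp j}\partial_t$, where the intended coefficients are $-\tfrac12 x_{m+j}$ for $j\leq m$ and $+\tfrac12 x_{j-m}$ for $j>m$; this is purely notational and does not affect the argument.
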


Further examples of Carnot coordinates are described in Section~\ref{sec:Canonical-coordinate} and Section~\ref{sec:vareps-Carnot-coord}. 

\section{Canonical coordinates}\label{sec:Canonical-coordinate}  
In this section, we show that the canonical coordinates of the 1st kind of~\cite{Go:LNM76, RS:ActaMath76} are Carnot coordinates. We also show that, although they are privileged coordinates, the canonical coordinates of the 2nd kind of~\cite{BS:SIAMJCO90, He:SIAMR91} are not Carnot coordinates. 

Throughout this section we let $(X_1,\ldots, X_n)$ be an $H$-frame near a given point $a\in M$. There is an open neighborhood $V$ of the origin in $\R^n$ such that, for all $x=(x_1, \ldots, x_n)$ in $V$ the flow $\exp[t(x_1X_1+\cdots +x_nX_n)](a)$ exists for all times $t\in [-1,1]$ and depends smoothly on $(x_1,\ldots, x_n)$ and $t$. We thus have a smooth map $V\ni x\rightarrow \exp_X(x;a)\in M$, where 
\begin{align}
\exp_{X}(x;a)& =\exp(x_1X_1+\cdots +x_nX_n)(a)\nonumber \\
& = \left.\exp[t(x_1X_1+\cdots +x_nX_n)](a)\right|_{t=1}. 
\label{eq:Can-coord.1st-kind}
\end{align}
For $j=1,\ldots, n$, we have $\partial_{x_j} \exp_{X}(0;a)= X_j(a)$. As $(X_1(a),\ldots, X_n(a))$ is a basis of $TM(a)$, we deduce that $\exp_X'(0;a)$ is non-singular. Therefore, possibly by shrinking $V$, we may assume that $x\rightarrow \exp_X(x;a)$ is a diffeomorphism from $V$ onto an open neighborhood of $a$ in $M$. Its inverse map then is a local chart around $a$. The local coordinates defined by this chart are the canonical coordinates of the first kind of Goodman~\cite{Go:LNM76} and Rothschild-Stein~\cite{RS:ActaMath76}. 

We quote the following result from~\cite{CP:Privileged}. 
 
 \begin{lemma}[\cite{CP:Privileged}]\label{lem:can.1stkind-approx}
 Suppose that $(x_1,\ldots, x_n)$ are privileged coordinates at $a$ adapted to $(X_1,\ldots, X_n)$. Then, in these coordinates and near $x=0$, we have
\begin{align*}
 \exp(x_1X_1+\cdots +x_nX_n)(0)= \exp\left(x_1X_1^{(a)}+\cdots + x_nX_n^{(a)}\right)\!(0)+\Ow\left(\|x\|^{w+1}\right).
\end{align*}
\end{lemma}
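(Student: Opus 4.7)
The natural approach is to compare the two flows by rescaling them through the anisotropic dilations $\delta_t$ and then exploiting Proposition~\ref{prop-pri-equiv}(ii), which is the statement that $t^{w_j}\delta_t^{*}X_j$ converges to $X_j^{(a)}$ precisely because $(x_1,\ldots,x_n)$ are privileged. I will then read off the remainder with the help of Lemma~\ref{lem-eq-we}.

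\smallskip

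\textbf{Step 1: reduction via dilations.} Fix $x\in \R^n$ close to the origin and set $X_x=\sum x_j X_j$ and $X_x^{(a)}=\sum x_j X_j^{(a)}$. For small $t\in\R$, let $s\mapsto z(t,s)$ be the solution of
\begin{equation*}
\dot z(t,s)=\sum_{j=1}^n t^{w_j}x_j X_j(z), \qquad z(t,0)=0,
\end{equation*}so that $z(t,1)=\exp\!\bigl(\sum_j(t\cdot x)_jX_j\bigr)(0)$. Setting $y(t,s):=t^{-1}\cdot z(t,s)$, a direct computation using~(\ref{eq:anisotropic-dtX}) shows that $y(t,\cdot)$ solves
\begin{equation*}
\dot y(t,s) = \sum_{j=1}^n x_j\bigl(t^{w_j}\delta_t^{*}X_j\bigr)(y(t,s)), \qquad y(t,0)=0.
\end{equation*}

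\smallskip

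\textbf{Step 2: using that the coordinates are privileged.} By Proposition~\ref{prop-pri-equiv}(ii), since the coordinates are privileged, for each $j$ we can write $t^{w_j}\delta_t^{*}X_j=X_j^{(a)}+tR_j(t,\cdot)$ with $R_j$ smooth in $(t,\cdot)$ on a neighborhood of $\{0\}\times U$ (this is a clean rephrasing of the asymptotic expansion provided by Proposition~\ref{prop:nilp-approx.vector-fields}). Substituting gives
\begin{equation*}
\dot y(t,s)=X_x^{(a)}(y(t,s)) +  t\sum_{j=1}^n x_j R_j(t,y(t,s)), \qquad y(t,0)=0.
\end{equation*}
Smooth dependence of ODE solutions on parameters then yields $y(t,s)=y(0,s)+t\,\tilde y(t,x,s)$ with $\tilde y$ jointly smooth. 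Since $y(0,\cdot)$ is the flow of $X_x^{(a)}$ starting at $0$, we get $y(0,1)=\exp(X_x^{(a)})(0)$, so
\begin{equation*}
t\cdot \exp(X_x^{(a)})(0) + t\cdot \bigl(t\,\tilde y(t,x,1)\bigr) = z(t,1)=\exp\!\Bigl(\textstyle\sum_j (t\cdot x)_j X_j\Bigr)(0).
\end{equation*}

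\smallskip

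\textbf{Step 3: extracting the $\Ow$-estimate.} As noted in Section~\ref{sec:Nilpotent-approximation} (formula~(\ref{eq:fga-homogeneity-expX})), the map $G(x):=\exp(X_x^{(a)})(0)$ satisfies $G(t\cdot x)=t\cdot G(x)$, so the previous identity reads
\begin{equation*}
\Theta(t\cdot x):=\exp\!\Bigl(\textstyle\sum_j (t\cdot x)_jX_j\Bigr)(0) - G(t\cdot x) = t\cdot\bigl(t\,\tilde y(t,x,1)\bigr).
\end{equation*}
Thus $t^{-1}\cdot \Theta(t\cdot x) = t\,\tilde y(t,x,1)$ with $\tilde y$ smooth, which is exactly condition~(iv) of Lemma~\ref{lem-eq-we} with $m=1$. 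Therefore $\Theta(z)=\Ow(\|z\|^{w+1})$ near $z=0$, which is the claimed asymptotics.

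\smallskip

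\textbf{Anticipated difficulty.} The only genuinely delicate point is justifying that the remainder is smooth jointly in $(t,x)$ down to $t=0$ — i.e., that condition~(iv) of Lemma~\ref{lem-eq-we} is actually met and not merely condition~(iii). This is handled by the smooth-dependence theorem for ODEs provided one first pulls the $t$-dependence of the vector field into a smooth family on a full neighborhood of $\{0\}\times U$; the asymptotic expansion of Proposition~\ref{prop:nilp-approx.vector-fields} is precisely what makes this possible. Everything else is bookkeeping with the dilations.
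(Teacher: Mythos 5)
This lemma is quoted here from~\cite{CP:Privileged}, so the present paper contains no proof to compare against; judged on its own, your argument is correct and is exactly the kind of rescaling argument the paper's machinery is built for: conjugate the flow by $\delta_t$, use privilegedness to identify the $t\to 0$ limit of the rescaled frame with $(X_1^{(a)},\ldots,X_n^{(a)})$, invoke smooth dependence of ODE solutions on parameters, and convert the resulting factor of $t$ into the $\Ow(\|x\|^{w+1})$ statement via Lemma~\ref{lem-eq-we}. The one point you should tighten is the justification that $t^{w_j}\delta_t^*X_j=X_j^{(a)}+tR_j(t,\cdot)$ with $R_j$ smooth jointly in $(t,\cdot)$ down to $t=0$: the Fr\'echet-space asymptotics of Proposition~\ref{prop:nilp-approx.vector-fields} by themselves only give $\op{O}(t)$ bounds in the seminorms of $\cX(U)$, not smoothness in $t$ at $t=0$. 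The clean route is to use the equivalence (i)$\Leftrightarrow$(iii) of Proposition~\ref{prop-pri-equiv} (each coefficient $b_{jk}$ has weight $\geq w_k-w_j$) and then apply Lemma~\ref{lem-eq-we} ((ii)$\Rightarrow$(iv)) to these coefficients: by~(\ref{eq:anisotropic-dtX}) the $k$-th coefficient of $t^{w_j}\delta_t^*X_j$ is $t^{w_j-w_k}b_{jk}(t\cdot x)$, which then extends smoothly across $t=0$ with value the homogeneous part defining $X_j^{(a)}$, and a Taylor expansion in $t$ gives the factorized remainder $tR_j(t,\cdot)$. Note also that your "anticipated difficulty" is less serious than you suggest: since Lemma~\ref{lem-eq-we} states that condition (iii) is already equivalent to (i), the pointwise-in-$x$ estimate $t^{-1}\cdot\Theta(t\cdot x)=\op{O}(t)$, which your ODE comparison yields for each fixed $x$, suffices to conclude; the joint smoothness giving (iv) is a bonus, not a necessity. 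The remaining steps (linearity of $\delta_t$, the homogeneity $\exp(X^{(a)}_{t\cdot x})(0)=t\cdot\exp(X^{(a)}_x)(0)$ from~(\ref{eq:fga-homogeneity-expX}), and global existence of the model flows from Lemma~\ref{lem:nilpotent.flowX}) are used correctly.
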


We are now in a position to prove the following result. 

\begin{proposition}\label{prop:Canonical-coord.1st-kind}
 The canonical coordinates of the 1st kind defined by~(\ref{eq:Can-coord.1st-kind}) are Carnot coordinates at $a$ adapted to the $H$-frame $(X_1,\ldots, X_n)$. 
\end{proposition}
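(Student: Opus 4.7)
The plan is to invoke the characterization of Carnot coordinates in Proposition~\ref{prop:char-Carnot-coord.phi-x-Ow}. The strategy is to start with any fixed system of Carnot coordinates at $a$ adapted to $(X_{1},\ldots,X_{n})$ — such a system exists by Theorem~\ref{thm:Carnot-coord.w-hom-privileged-to-Carnot} — and then to show that the transition map from these Carnot coordinates to the canonical coordinates of the 1st kind has the form $\phi(x)=x+\Ow(\|x\|^{w+1})$ near $x=0$. Once this is established, Proposition~\ref{prop:char-Carnot-coord.phi-x-Ow} immediately forces the canonical coordinates to be Carnot coordinates as well.

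To execute this, let $(x_{1},\ldots,x_{n})$ be Carnot coordinates at $a$ adapted to $(X_{1},\ldots,X_{n})$ with associated chart $\kappa_{0}$, and let $\kappa_{1}$ denote the canonical chart, characterized by $\kappa_{1}^{-1}(y)=\exp_{X}(y;a)$. Setting $\phi=\kappa_{1}\circ\kappa_{0}^{-1}$, I observe that $\phi^{-1}(y)$ is exactly the coordinate expression of $\exp(y_{1}X_{1}+\cdots+y_{n}X_{n})(a)$ in the chart $\kappa_{0}$. Since Carnot coordinates are in particular privileged, Lemma~\ref{lem:can.1stkind-approx} applies and yields
\begin{equation*}
    \phi^{-1}(y)=\exp\bigl(y_{1}X_{1}^{(a)}+\cdots+y_{n}X_{n}^{(a)}\bigr)(0)+\Ow(\|y\|^{w+1}),
\end{equation*}
where the $X_{j}^{(a)}$ are the model vector fields at $a$ in the coordinates $(x_{1},\ldots,x_{n})$. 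Because these coordinates are Carnot, $X_{j}^{(a)}=X_{j}^{a}$ for all $j$, and Lemma~\ref{lem:Carnot-coord.admissibility-Xa} then collapses the leading term to $y$ itself. Hence $\phi^{-1}(y)=y+\Ow(\|y\|^{w+1})$, and Proposition~\ref{lem:Carnot-coord.inverse-Ow} applied to $\phi^{-1}$ (with $w$-homogeneous principal part $\hat{\phi}^{-1}=\op{id}$) yields the analogous expansion $\phi(x)=x+\Ow(\|x\|^{w+1})$, so Proposition~\ref{prop:char-Carnot-coord.phi-x-Ow} finishes the proof.

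The substantive input is really Lemma~\ref{lem:can.1stkind-approx}, which provides the anisotropic asymptotic expansion of the first-kind exponential in privileged coordinates; everything else is bookkeeping. The main place one must be careful is in identifying which of $\phi$, $\phi^{-1}$ the lemma describes: the canonical chart sends the point $\exp_{X}(y;a)$ to the parameter $y$, so reading this point through $\kappa_{0}$ produces $\phi^{-1}(y)$ rather than $\phi(y)$. This is precisely what forces the invocation of Proposition~\ref{lem:Carnot-coord.inverse-Ow} at the end to pass from $\phi^{-1}$ back to $\phi$ before applying Proposition~\ref{prop:char-Carnot-coord.phi-x-Ow}.
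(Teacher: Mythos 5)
Your proposal is correct and follows essentially the same route as the paper's proof: compare the canonical chart with a fixed Carnot chart, recognize $\phi^{-1}$ as the coordinate expression of $x\mapsto\exp(x_1X_1+\cdots+x_nX_n)(0)$, apply Lemma~\ref{lem:can.1stkind-approx} together with $X_j^{(a)}=X_j^a$ and Lemma~\ref{lem:Carnot-coord.admissibility-Xa} to get $\phi^{-1}(x)=x+\Ow(\|x\|^{w+1})$, and then pass back to $\phi$ via Proposition~\ref{lem:Carnot-coord.inverse-Ow} and conclude with the characterization of Carnot coordinate changes. The only difference is cosmetic: you cite Proposition~\ref{prop:char-Carnot-coord.phi-x-Ow} explicitly at the final step, which is in fact the cleaner reference for that implication.
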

\begin{proof}
 The local coordinates at stake are defined by the local chart  $\kappa_a$ that inverts the map $V\ni x\rightarrow \exp_X(x;a)$. Let $\kappa$ be a local chart that gives rise to Carnot coordinates at $a$ adapted to $(X_1, \ldots, X_n)$. Without any loss of generality we may assume that $\kappa_a$ and $\kappa$ have the same domain. Set $\phi=\kappa_a\circ \kappa^{-1}$. If we denote by $(x_1, \ldots, x_n)$ the local coordinates defined by $\kappa$, then we pass from these coordinates to the canonical coordinates of the 1st kind by means of the change of coordinates $x\rightarrow \phi(x)$. Therefore, by Proposition~\ref{lem:Carnot-coord.inverse-Ow} the coordinates provided by $\kappa_a$ are Carnot coordinates at $a$ adapted to $(X_1,\ldots, X_n)$  if and only if $\phi(x)=x+\Ow(\|x\|^{w+1})$ near $x=0$. Combining this with Proposition~\ref{lem:Carnot-coord.inverse-Ow} further shows that we have the desired result if and only $\phi^{-1}(x)=x+\Ow(\|x\|^{w+1})$ near $x=0$.

 Bearing all this in mind, we observe that, for all $x\in V$, we have 
\begin{align*}
 \phi^{-1}(x) & = \kappa\left[ \exp(x_1X_1+\cdots +x_nX_n)(a)\right] \\
 & = \kappa \circ  \exp\left( x_1\kappa_*X_1+\cdots + x_n\kappa_*X_n\right)\circ \kappa^{-1}(0)\\
 & = \exp\left( x_1\kappa_*X_1+\cdots + x_n\kappa_*X_n\right)(0). 
\end{align*}
 In other word, the map $\phi^{-1}$ is the map $x\rightarrow \exp(x_1X_1+\cdots +x_nX_n)(0)$ in the Carnot coordinates defined by $\kappa$. In these coordinates, the model vector field of each vector field $X_j$, $j=1,\ldots, n$,  is $X_j^a$. Therefore, by using Lemma~\ref{lem:Carnot-coord.admissibility-Xa} and Lemma~\ref{lem:can.1stkind-approx} we see that, near $x=0$, we have 
\begin{equation}
 \phi^{-1}(x)=\exp_{X^{a}}(x) +\Ow\left(\|x\|^{w+1}\right) = x +\Ow\left(\|x\|^{w+1}\right). 
  \label{eq:Can.phi-1-expX}
\end{equation}
This proves the result. 
\end{proof}

Let us now look at the canonical coordinates of the 2nd kind of~\cite{BS:SIAMJCO90, He:SIAMR91}. There is an open neighborhood $W$ of the origin $0\in \R^n$ such that, for all $(x_1, \ldots, x_n)\in W$, the composition of flows $\exp(t x_1X_1)\circ \cdots \circ \exp(tx_nX_n)(a)$ is well defined for all $t\in [-1,1]$ and depends smoothly on $(x_1,\ldots, x_n)$ and $t$. Thus, we obtain a smooth map $W\ni x \rightarrow \gamma_X(x;a)\in M$, where 
\begin{align}
  \gamma_X(x;a)& =  \exp\left( x_1X_1\right) \circ \cdots \circ  \exp\left(x_nX_n\right)\!(a)\nonumber \\
  & = \left.  \exp\left( tx_1X_1\right) \circ \cdots \circ  \exp\left(x_nX_n\right)\!(a)\right|_{t=1}. 
  \label{eq:Carnot-coord.can-coord2nd}
\end{align}
For $j=1,\ldots, n$, we have $\partial_{x_j} \gamma_X(0)= \partial_{x_j} \exp(x_jX_j)(a)= X_j(a)$. Therefore, in the same way as with the map $x\rightarrow \exp_X(x; a)$ above, possibly by shrinking $W$, we may assume that $x\rightarrow  \gamma_X(x;a)$ is a diffeomorphism from $W$ onto its range. Its inverse map then defines a local chart around $a$. The local coordinates defined by that chart are the canonical coordinates of the 2nd kind of Bianchini-Stefani~\cite{BS:SIAMJCO90} and Hermes~\cite{He:SIAMR91}.

\begin{lemma}[\cite{CP:Privileged}]\label{prop:can.2ndkind-approx}
 Suppose that $(x_1,\ldots,x_n)$ are privileged coordinates at $a$ adapted to $(X_1,\ldots, X_n)$. Then, in these coordinates and near $x=0$, we have
\begin{equation*}
\exp\left( x_1X_1\right) \circ \cdots \circ  \exp\left(x_nX_n\right)\!(0)= \exp\left( x_1X_1^{(a)}\right) \circ \cdots \circ  \exp\left(x_nX_n^{(a)}\right)\!(0) +\Ow\left(\|x\|^{w+1}\right). 
\end{equation*}
\end{lemma}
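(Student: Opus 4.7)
Let $\Phi(x)$ and $\hat{\Phi}(x)$ denote the left- and right-hand sides of the asserted expansion. My plan is to show $\Phi(x)=\hat{\Phi}(x)+\Ow(\|x\|^{w+1})$ by verifying the condition in Lemma~\ref{lem-eq-we}(iv) via anisotropic rescaling, using the naturality of flows under $\delta_t$ together with the asymptotics from Proposition~\ref{prop-pri-equiv} (equivalently, Proposition~\ref{prop:nilp-approx.vector-fields}).

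First I would establish the auxiliary fact that $\hat{\Phi}$ is $w$-homogeneous. Since each $X_j^{(a)}$ is homogeneous of degree $-w_j$, the identity $\exp(sX)\circ\delta_{t^{-1}}=\delta_{t^{-1}}\circ \exp(s\,\delta_{t^{-1}}^{*}X)$ specializes to
\begin{equation*}
\exp(t^{w_j}x_j X_j^{(a)})(t\cdot z)=t\cdot \exp(x_j X_j^{(a)})(z).
\end{equation*}
A short downward induction on $k=n,n{-}1,\ldots,1$ then yields $\hat{\Phi}(t\cdot x)=t\cdot \hat{\Phi}(x)$. In particular $t^{-1}\cdot \hat{\Phi}(t\cdot x)=\hat{\Phi}(x)$, so by Lemma~\ref{lem-eq-we} it suffices to prove $t^{-1}\cdot \Phi(t\cdot x)=\hat{\Phi}(x)+\op{O}(t)$ smoothly in $(x,t)$ near $t=0$.

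The main step is an inductive analysis of the composition. Define
\begin{equation*}
y_k(t,x)=\exp(t^{w_k}x_kX_k)\circ\cdots\circ\exp(t^{w_n}x_nX_n)(0),\qquad y_{n+1}(t,x)=0,
\end{equation*}
and let $\hat{y}_k(x)$ be its $X^{(a)}$-analogue. The same naturality identity applied to $X_j$ gives
\begin{equation*}
\exp(t^{w_j}x_j X_j)(t\cdot z)=t\cdot \exp\!\bigl(x_j\,[t^{w_j}\delta_t^{*}X_j]\bigr)(z).
\end{equation*}
By Proposition~\ref{prop-pri-equiv}(ii) the bracketed vector field equals $X_j^{(a)}+tR_j(\cdot,t)$ with $R_j$ smooth in $(x,t)$ down to $t=0$. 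Assuming inductively that $t^{-1}\cdot y_{k+1}(t,x)=\hat{y}_{k+1}(x)+\op{O}(t)$, smooth dependence of flows on both the generating vector field and the initial condition (for bounded time $x_k$ on a compact set) yields $t^{-1}\cdot y_k(t,x)=\exp(x_kX_k^{(a)})(\hat{y}_{k+1}(x))+\op{O}(t)=\hat{y}_k(x)+\op{O}(t)$. Taking $k=1$ finishes the proof, and Lemma~\ref{lem-eq-we}(iv) packages the remainder as $\Ow(\|x\|^{w+1})$.

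The principal obstacle is bookkeeping: one must check that the rescaled flows are jointly smooth in $(x,t)$ on a fixed neighborhood of $(x,t=0)$, so that the remainders genuinely compose into a single $\op{O}(t)$ term rather than degenerating as $t\to 0$. This is handled by noting that the perturbed vector field $X_j^{(a)}+tR_j(\cdot,t)$ is a smooth family on a fixed open set $U$, with $x_k$ varying over a compact set, so classical ODE regularity applies uniformly. The rest is a straightforward induction.
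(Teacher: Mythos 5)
Your proof is correct and follows what is essentially the expected argument: this lemma is only quoted here from~\cite{CP:Privileged} without a reproduced proof, and the natural route is exactly yours, namely rewriting $t^{-1}\cdot\Phi(t\cdot x)$ as a composition of flows of the rescaled fields $t^{w_j}\delta_t^*X_j$, using the $-w_j$-homogeneity of the $X_j^{(a)}$ to identify the limit with $\hat{\Phi}(x)$, and converting the resulting $\op{O}(t)$ estimate into $\Ow(\|x\|^{w+1})$ via Lemma~\ref{lem-eq-we}. One small refinement: Proposition~\ref{prop-pri-equiv}(ii) by itself only gives $\op{O}(t)$ in the Fr\'echet space $\cX(U)$, so to write $t^{w_j}\delta_t^*X_j=X_j^{(a)}+tR_j(\cdot,t)$ with $R_j$ jointly smooth down to $t=0$ you should invoke the weight characterization in Proposition~\ref{prop-pri-equiv}(iii) and apply Lemma~\ref{lem-eq-we}(iv) to the coefficients of $X_j$; with that in place your induction on the composition of flows, using smooth dependence on the generating field and the initial point, goes through as stated.
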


As mentioned in Remark~\ref{rmk:privileged.canonical-coord}, the canonical coordinates of the 2nd kind are privileged coordinates. However, unlike the canonical coordinates of the 1st kind, in general these coordinates are not Carnot coordinates. More precisely, we have the following result. 
 
\begin{proposition}\label{prop:Canonical-coord.2nd-kind}
 If $r\geq 2$, then the canonical coordinates of the second kind given by~(\ref{eq:Carnot-coord.can-coord2nd}) are never Carnot coordinates.
\end{proposition}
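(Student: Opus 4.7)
The plan is to mirror the proof of Proposition~\ref{prop:Canonical-coord.1st-kind}. I would let $\kappa$ be a local chart around $a$ yielding Carnot coordinates adapted to $(X_1,\ldots,X_n)$, denote by $\kappa_a$ the chart defining the canonical coordinates of the 2nd kind, and set $\phi=\kappa_a\circ\kappa^{-1}$. Combining Proposition~\ref{prop:char-Carnot-coord.phi-x-Ow} with Proposition~\ref{lem:Carnot-coord.inverse-Ow} reduces the question to checking whether $\phi^{-1}(x)=x+\Ow(\|x\|^{w+1})$ near the origin. The goal becomes to show that this asymptotic identity always fails under $r\geq 2$.

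Expressing $\phi^{-1}$ in the Carnot coordinates defined by $\kappa$, we obtain
\begin{equation*}
\phi^{-1}(x)=\exp\left(x_1\kappa_*X_1\right)\circ\cdots\circ\exp\left(x_n\kappa_*X_n\right)\!(0).
\end{equation*}
Because in these coordinates the model vector fields of $\kappa_*X_j$ are precisely the left-invariant vector fields $X_j^a$ on $G(a)$, Lemma~\ref{prop:can.2ndkind-approx} gives
\begin{equation*}
\phi^{-1}(x)=\exp(x_1X_1^a)\circ\cdots\circ\exp(x_nX_n^a)(0)+\Ow\left(\|x\|^{w+1}\right)\quad \text{near $x=0$}.
\end{equation*}
The leading term is a $w$-homogeneous polynomial map, so being Carnot is equivalent to the identity $\exp(x_1X_1^a)\circ\cdots\circ\exp(x_nX_n^a)(0)=x$ for all $x\in\R^n$. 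I would then unfold the left-hand side inside $G(a)$: each $X_j^a$ is left-invariant, and under the Dynkin identification $\exp(x_j\epsilon_j)=x_j\epsilon_j$ with $X_j^a(0)=\epsilon_j$, so the time-one flow of $X_j^a$ from a point $y$ is the right multiplication $y\cdot(x_j\epsilon_j)$. Iterating from right to left produces the group product
\begin{equation*}
(x_n\epsilon_n)\cdot(x_{n-1}\epsilon_{n-1})\cdots(x_1\epsilon_1)\in G(a).
\end{equation*}

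Expanding this via the Baker-Campbell-Hausdorff formula~(\ref{eq:Carnot.BCH-Formula}) gives, to second order,
\begin{equation*}
\sum_{j}x_j\epsilon_j+\frac{1}{2}\sum_{k>l}x_kx_l\,[\epsilon_k,\epsilon_l] + \text{higher order},
\end{equation*}
where the Lie bracket uses the structure constants $L_{ij}^k(a)$ of Remark~\ref{rem-xixj}. The main obstacle is to show that this quadratic correction is genuinely non-zero whenever $r\geq 2$: one must exhibit at least one pair $k>l$ with $w_k+w_l\leq r$ and $[\epsilon_k,\epsilon_l]\neq 0$. Once such a pair is secured, the component along $\epsilon_m$ with $w_m=w_k+w_l$ acquires the $w$-homogeneous monomial $\tfrac{1}{2}x_kx_l L_{kl}^m(a)$ of weight exactly $w_m$, which is plainly not absorbed into the remainder $\Ow(\|x\|^{w+1})$. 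This yields a non-trivial obstruction and proves that the canonical coordinates of the 2nd kind cannot be Carnot.
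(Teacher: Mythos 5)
Your reduction is exactly the paper's: the same $\phi=\kappa_a\circ\kappa^{-1}$, the same use of Proposition~\ref{prop:char-Carnot-coord.phi-x-Ow} and Proposition~\ref{lem:Carnot-coord.inverse-Ow}, the same appeal to Lemma~\ref{prop:can.2ndkind-approx}, and the same observation that $w$-homogeneity of the leading term turns the Carnot condition into the exact identity $\exp(x_1X_1^a)\circ\cdots\circ\exp(x_nX_n^a)(0)=x$. The gap is at the one place where the content lies. You correctly rewrite the composed flows as the group product $(x_n\epsilon_n)\cdot(x_{n-1}\epsilon_{n-1})\cdots(x_1\epsilon_1)$ in $G(a)$ and note that a single non-vanishing bracket $[\epsilon_k,\epsilon_l]$ would produce a quadratic monomial of weight exactly $w_m$ that cannot be absorbed into $\Ow(\|x\|^{w+1})$; but you never establish that such a pair exists---you explicitly defer it (``the main obstacle\dots once such a pair is secured'') and then declare the result proved. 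The existence of a pair $k>l$ with $[\epsilon_k,\epsilon_l]\neq 0$ is not a formality: it is precisely the non-abelianness of $\fg(a)$, i.e.\ the non-vanishing of some structure constant $L^m_{kl}(a)$ with $w_k+w_l=w_m$, and that is the very point the argument has to address rather than postulate.

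The paper closes this differently: by Lemma~\ref{lem:Carnot-coord.admissibility-Xa} the Carnot condition becomes $\exp(x_1X_1^a)\circ\cdots\circ\exp(x_nX_n^a)(0)=\exp(x_1X_1^a+\cdots+x_nX_n^a)(0)$ for all $x$, and the paper shows this holds \emph{if and only if} the Lie algebra generated by $X_1^a,\ldots,X_n^a$ is abelian, which it then identifies with $r=1$. Your bidegree-$(1,1)$ BCH computation is in fact exactly the proof of the ``identity implies abelian'' direction: if the identity held, specializing to two non-zero coordinates $x_k,x_l$ and isolating the unique bilinear term forces $[\epsilon_k,\epsilon_l]=0$ for every $k>l$. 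So the repair is to run your argument contrapositively: do not try to exhibit a non-zero bracket, but show that the Carnot property would force \emph{all} brackets of $G(a)$ to vanish, hence the group to be abelian and the step to be $1$, contradicting $r\geq 2$. With that reformulation your proposal becomes complete and coincides with the paper's proof; as written, it stops short of the decisive step.
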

\begin{proof}
We proceed as in the proof of Proposition~\ref{prop:Canonical-coord.1st-kind}.  
Let $\hat{\kappa}_a:U_0\rightarrow V$ be the inverse of the diffeomorphism~(\ref{eq:Carnot-coord.can-coord2nd}). This is the local chart that defines the canonical coordinates of the 2nd kind. Let $\kappa$ be a local chart around $a$ that gives rise to a system of Carnot coordinates at $a$ adapted to $(X_1,\ldots, X_n)$. As in the proof of Proposition~\ref{prop:Canonical-coord.1st-kind}, we may assume that $\kappa_a$ and $\kappa$ have the same domain. Set $\phi=\kappa_a\circ \kappa^{-1}$ and denote by $(x_1,\ldots, x_n)$ the local coordinates defined by $\kappa$. In the same way as in the proof of Proposition~\ref{prop:Canonical-coord.1st-kind} the canonical coordinates of the 2nd kind are Carnot coordinates are $a$ adapted to $(X_1,\ldots, X_n)$ if and only if $\phi^{-1}(x)=x+\Ow(\|x\|^{w+1})$ near $x=0$.

Bearing this in mind, by arguing as in the proof of Proposition~\ref{prop:Canonical-coord.1st-kind} and using the fact that we are working in Carnot coordinates, we see that, near $x=0$, we have 
\begin{align*}
 \phi^{-1}(x) & = \exp\left(x_1X_1^{(a)}\right)\circ \cdots \exp\left(x_nX_n^{(a)}\right)\!(0)+ \Ow\left(\|x\|^{w+1}\right) \\
 & =  \exp\left(x_1X_1^a\right)\circ \cdots \exp\left(x_nX_n^a\right)\!(0)+ \Ow\left(\|x\|^{w+1}\right). 
\end{align*}
Recall that by Lemma~\ref{lem:Carnot-coord.admissibility-Xa} we have $\exp(x_1X_1^a+\cdots + X_n^a)=x$ for all $x\in \R^n$. Therefore, we deduce that  the canonical coordinates of the 2nd kind are Carnot coordinates are $a$ adapted to $(X_1,\ldots, X_n)$ if and only if we have 
\begin{equation*}
  \exp\left(x_1X_1^a\right)\circ \cdots \exp\left(x_nX_n^a\right)\!(0) = \exp(x_1X_1^a+\cdots + X_n^a)(0) \qquad \text{for all $x\in \R^n$}. 
\end{equation*}
This happens if and only if the Lie algebra generated by $X_1^a, \ldots, X_n^a$ is Abelian, i.e., $r=1$. Therefore, if $r\geq 2$, then the canonical coordinates of the 2nd kind cannot be Carnot coordinates at $a$. The proof is complete. 
 \end{proof}

\section{$\varepsilon$-Carnot Coordinates}\label{sec:vareps-Carnot-coord}
In this section, we single out a special type of polynomial Carnot coordinates, which we call $\varepsilon$-Carnot coordinates. They are obtained by converting the privileged coordinates of Proposition~\ref{prop-uni} and~\cite{Be:Tangent} into Carnot coordinates. As we shall see, on graded nilpotent groups these coordinates arise from the group multiplication. 

Throughout this section we let $(X_1,\ldots, X_n)$ be an $H$-frame near a given point $a \in M$. 

\subsection{$\varepsilon$-Carnot coordinates} Suppose we are given local coordinates $(x_{1},\ldots,x_{n})$ near $a$.  Combining Theorem~\ref{thm:Carnot-coord.w-hom-privileged-to-Carnot} with the construction of privileged coordinates presented in Section~\ref{sec:Privileged} enables us to obtain an \emph{effective} construction of Carnot coordinates as follows. 

\begin{definition} 
For $j=1,...,n$, let $X_j^{(a)}$ be the model vector field of $X_j$ in the $\psi$-privileged coordinates provided by Proposition~\ref{prop:privileged.psi-privileged}. Then
    \begin{enumerate}
        \item  The map $\hat{\varepsilon}_{a}:\R^n \rightarrow \R^n$ is the composition $\exp_{X^{(a)}}^{-1}\circ \hat{\psi}_{a}$, where $\hat{\psi}_{a}$ is as in 
        Definition~\ref{def-psia} and $\exp_{X^{(a)}}$ is given by~(\ref{eq:many.exp_X(a)}). 
            
        \item  The map $\varepsilon_{a}:\R^n \rightarrow \R^n$ is the composition $\exp_{X^{(a)}}^{-1}\circ \psi_{a}$, where $\hat{\psi}_{a}$ is as in 
        Definition~\ref{def-psia}.   
    \end{enumerate}
\end{definition}

The $\psi$-privileged coordinates are provided by the change of coordinates $x\rightarrow \psi_a(x)$. Therefore, by using Theorem~\ref{thm:Carnot-coord.w-hom-privileged-to-Carnot} we arrive at the following result. 

\begin{proposition}\label{prop:Carnot-coord.carnot-cor}
    The change of coordinates $x\rightarrow \varepsilon_{a}(x)$ provides us with Carnot coordinates at $a$ adapted to the 
    $H$-frame $(X_{1},\ldots,X_{n})$. 
\end{proposition}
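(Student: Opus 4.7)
The plan is to simply compose two earlier results: first use $\psi_a$ to pass from the original local coordinates to $\psi$-privileged coordinates, and then apply Theorem~\ref{thm:Carnot-coord.w-hom-privileged-to-Carnot} in those privileged coordinates to convert them into Carnot coordinates by means of $\exp_{X^{(a)}}^{-1}$. The composition of the two is precisely $\varepsilon_a$, so the proposition should follow immediately from the preceding machinery.

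More concretely, I would proceed as follows. First, by Proposition~\ref{prop:privileged.psi-privileged} (together with the definition of $\psi_a$), the change of variables $x \rightarrow \psi_a(x)$ provides a system of privileged coordinates at $a$ adapted to $(X_1,\ldots,X_n)$. Let us denote these $\psi$-privileged coordinates by $(y_1,\ldots,y_n)$. By the very definition of the $X_j^{(a)}$ used to build $\exp_{X^{(a)}}$, the model vector field of $X_j$ in the coordinates $(y_1,\ldots,y_n)$ is precisely $X_j^{(a)}$ for $j=1,\ldots,n$.

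Next, I would apply the second part of Theorem~\ref{thm:Carnot-coord.w-hom-privileged-to-Carnot} in these $\psi$-privileged coordinates. It asserts that $y \rightarrow \exp_{X^{(a)}}^{-1}(y)$ is the unique $w$-homogeneous change of coordinates that transforms $(y_1,\ldots,y_n)$ into Carnot coordinates at $a$ adapted to $(X_1,\ldots,X_n)$. Composing the two coordinate changes, the overall passage from the original local coordinates to the final Carnot coordinates is given by $x \rightarrow \exp_{X^{(a)}}^{-1}\circ \psi_a(x) = \varepsilon_a(x)$, which is exactly the definition of $\varepsilon_a$. This yields the desired conclusion.

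There is essentially no hard step here: the content is in Theorem~\ref{thm:Carnot-coord.w-hom-privileged-to-Carnot}, which in turn rests on Lemma~\ref{lem:Carnot-coord.admissibility-Xa} and the characterization given in Proposition~\ref{prop:Nilpotent.coord-G(a)G}. The only small subtlety to watch is consistency of conventions: one must verify that the $X_j^{(a)}$ appearing in the definition of $\hat{\varepsilon}_a$ (and hence of $\varepsilon_a$) really are the model vector fields of $X_j$ in the $\psi$-privileged coordinates produced by $\psi_a$, rather than in some other system. This is immediate from the construction, since $\psi_a$ is defined precisely as the coordinate change used to put the $X_j$ into the $\psi$-privileged form, and the $X_j^{(a)}$ are read off as the leading terms in the anisotropic expansion~(\ref{eq:char-priv.model-vector-field2}) in those very coordinates.
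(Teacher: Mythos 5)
Your proposal is correct and follows essentially the same route as the paper: the paper's proof simply observes that the $\psi$-privileged coordinates come from the change of coordinates $x\rightarrow\psi_a(x)$ and then invokes Theorem~\ref{thm:Carnot-coord.w-hom-privileged-to-Carnot}, exactly as you do. Your closing remark about the $X_j^{(a)}$ being the model vector fields in the $\psi$-privileged coordinates is indeed how the definition of $\varepsilon_a$ is set up, so no gap remains.
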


\begin{definition}
    The Carnot coordinates provided by the change of variables $x\rightarrow \varepsilon_{a}(x)$ are called \emph{$\varepsilon$-Carnot coordinates}. 
    The map $\varepsilon_{a}$ is called the \emph{$\varepsilon$-Carnot coordinate map}. 
\end{definition}

\begin{remark}
 For $j=1,\ldots, n$, let $\hat{X}_j^{(a)}$ be the pullback by $x\rightarrow \psi_a(x)$ of the model vector field $X_j^{(a)}$. Then we have
\begin{align*}
 \varepsilon_a^{-1}(x) &  = \psi_a^{-1}\circ \exp_{X^{(a)}}(x)\\
     & = \psi_a^{-1}\circ \exp\left(x_1X_1^{(a)} + \cdots + x_1X_1^{(a)}\right)\!(0)\\ 
      & = \exp\left(x_1\hat{X}_1^{(a)} + \cdots + x_1\hat{X}_1^{(a)}\right)\!(a)
\end{align*}
 Therefore, the $\varepsilon$-Carnot coordinates are the canonical coordinates of the 1st kind in the sense of~\cite{Go:LNM76, RS:ActaMath76} that are associated with the frame $(\hat{X}_1^{(a)}, \ldots,\hat{X}_n^{(a)})$. 
\end{remark}

 \begin{remark}\label{rem-phi-hat}
  The maps $\hat{\psi}_a$ and $\psi_a$ are related by $\psi_a=\hat{\psi}_a\circ T_a$, where $T_{a}$ is the affine map from Lemma~\ref{lem-affine}. Thus, 
\begin{equation*}
 \varepsilon_a = \exp_{X^{(a)}}^{-1}\circ  \psi_a = \exp_{X^{(a)}}^{-1}\circ  \hat{\psi}_a\circ T_a = \hat{\varepsilon}_a\circ T_a.  
 \end{equation*}
Moreover, it follows from Proposition~\ref{prop-uni} and Proposition~\ref{prop:Carnot-coord.form-phi} that the components $\hat{\varepsilon}_{a}(x)_k$, $k=1, \ldots, n$, are of the form,
     \begin{equation*}
        (\hat{\varepsilon}_{a})_{k}(x)=x_{k}+\sum_{\substack{\brak\alpha \leq w_{k}\\|\alpha|\geq 2}}c_{k\alpha}x^{\alpha}, 
     \end{equation*}where the coefficients $c_{k\alpha} \in \R$ are as follows:
     \begin{itemize}
         \item[-] If $\brak\alpha=w_{k}$, then they agree with the coefficients $c_{k\alpha}$ in~(\ref{eq:Carnot.eps-cjalpha}).  
     
         \item[-] If $\brak \alpha<w_{k}$, then they are universal polynomials in the coefficients $c_{l\beta}$ with $\brak\beta=w_{l}<w_{k}$
         and the coefficients $a_{l\gamma}$ in~(\ref{eq-form-h}) with  $\brak \gamma<w_{l}<w_{k}$.
    \end{itemize}
     We refer to Proposition~\ref{prop:Carnot-prop.polynomial-X} for a more precise description of the coefficients $c_{k\alpha}$. 
 \end{remark}

Combining Theorem~\ref{thm:Carnot-coord.w-hom-privileged-to-Carnot} and Proposition~\ref{prop:Carnot-coord.carnot-cor} we obtain the following description of the systems of Carnot coordinates. 

\begin{corollary}\label{cor:vareps-Carnot.all-Carnot-coord}
 The systems of Carnot coordinates at $a$ adapted to $(X_1,\ldots, X_n)$ are exactly the coordinate systems defined by local charts of the form $(\op{id}+\Theta)\circ \varepsilon_{\kappa(a)} \circ \kappa$, where $\kappa$ is a local chart near $a$ such that $\kappa(a)=0$ and $\Theta$ is a (smooth) $\Ow(\|x\|^{w+1})$-map near the origin $0\in \R^n$.  
\end{corollary}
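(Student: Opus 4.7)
The proof is a direct assembly of Proposition~\ref{prop:Carnot-coord.carnot-cor} and Proposition~\ref{prop:char-Carnot-coord.phi-x-Ow}, so I would split it into the two inclusions.

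For the forward direction, let $\kappa$ be any local chart near $a$ with $\kappa(a)=0$. Expressing the $H$-frame $(X_1,\ldots,X_n)$ in the local coordinates provided by $\kappa$ and applying Proposition~\ref{prop:Carnot-coord.carnot-cor}, I would conclude that $\varepsilon_{\kappa(a)}\circ\kappa$ is already a chart defining Carnot coordinates at $a$ adapted to $(X_1,\ldots,X_n)$. If $\Theta$ is a smooth $\Ow(\|x\|^{w+1})$-map near the origin, then $\phi(x)=x+\Theta(x)$ fits the hypothesis of Proposition~\ref{prop:char-Carnot-coord.phi-x-Ow}, so post-composing with $\op{id}+\Theta$ yields another system of Carnot coordinates at $a$ adapted to $(X_1,\ldots,X_n)$. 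Thus every chart of the asserted form defines Carnot coordinates.

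For the converse, suppose $\tilde\kappa$ is a chart near $a$ giving Carnot coordinates at $a$ adapted to $(X_1,\ldots,X_n)$; in particular $\tilde\kappa(a)=0$. Setting $\kappa=\tilde\kappa$, the chart $\varepsilon_{\kappa(a)}\circ\kappa$ is a second Carnot coordinate chart at $a$ by Proposition~\ref{prop:Carnot-coord.carnot-cor}. The transition map between these two Carnot coordinate systems is the diffeomorphism of a neighborhood of the origin in $\R^n$ given by $\phi=\varepsilon_{\kappa(a)}^{-1}$. Proposition~\ref{prop:char-Carnot-coord.phi-x-Ow} then forces $\phi(x)=x+\Ow(\|x\|^{w+1})$ near $x=0$, so $\phi=\op{id}+\Theta$ for some smooth $\Ow(\|x\|^{w+1})$-map $\Theta$, and
\[
\tilde\kappa \;=\; \phi\circ\varepsilon_{\kappa(a)}\circ\kappa \;=\; (\op{id}+\Theta)\circ\varepsilon_{\kappa(a)}\circ\kappa,
\]
which is the desired presentation.

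The argument is routine once the two input propositions are invoked correctly. The only point requiring care is that $\varepsilon_{\kappa(a)}$ is built from the local expression of the $H$-frame in the coordinates provided by $\kappa$, so the same chart $\kappa=\tilde\kappa$ must be used on both sides when identifying the transition map with $\varepsilon_{\kappa(a)}^{-1}$; beyond this bookkeeping there is no genuine obstacle.
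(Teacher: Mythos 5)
Your proof is correct and takes essentially the same route as the paper, which states the corollary as an immediate combination of Proposition~\ref{prop:Carnot-coord.carnot-cor} (the $\varepsilon$-Carnot coordinates are Carnot coordinates in any starting chart) with the characterization of Carnot-to-Carnot coordinate changes in Proposition~\ref{prop:char-Carnot-coord.phi-x-Ow}, exactly the two inclusions you carry out. The one bookkeeping point you flag—taking $\kappa=\tilde\kappa$ so that the transition map is precisely $\varepsilon_{\kappa(a)}^{-1}$—is handled correctly.
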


We further have the following characterization of $\varepsilon$-Carnot coordinates. 

\begin{theorem}\label{thm:Carnot-coord.unicity-normal2}
      The $\varepsilon$-Carnot coordinates are the unique Carnot coordinates at $a$
      adapted to the $H$-frame $(X_{1},\ldots,X_{n})$ that arise from a change of coordinates of the form $x\rightarrow (\hat{\varepsilon}\circ T)(x)$,  where 
  $T(x)$ is an affine map such that $T(a)=0$, and $\hat{\varepsilon}(x)$ is a polynomial diffeomorphism whose components $\hat{\varepsilon}_{k}(x)$, $k=1,\ldots,n$, are of the form, 
                      \begin{equation}
                               \hat{\varepsilon}_{k}(x)=x_{k}+\sum_{\substack{\brak\alpha \leq w_{k}\\|\alpha|\geq 2}}d_{k\alpha}x^{\alpha}, \qquad d_{k\alpha} \in \R. 
                              \label{eq-phi-hat}
                         \end{equation}
\end{theorem}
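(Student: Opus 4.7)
The plan is to pin down the affine factor $T$ first, then force the polynomial factor $\hat{\varepsilon}$ to coincide with $\hat{\varepsilon}_a$ by using the freedom characterization of Proposition~\ref{prop:char-Carnot-coord.phi-x-Ow} together with the triangular structure encoded in~(\ref{eq-phi-hat}). First I would observe that any Carnot coordinates at $a$ are in particular linearly adapted, and that each polynomial term in~(\ref{eq-phi-hat}) has ordinary degree $\geq 2$, so $\hat{\varepsilon}'(0)=\op{id}$. Hence $(\hat{\varepsilon}\circ T)'(a)=T'$, and linear adaptedness at $a$ uniquely determines this matrix. Combined with $T(a)=0$ and Lemma~\ref{lem-affine}, this forces $T=T_{a}$.

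Writing $\varepsilon_{a}=\hat{\varepsilon}_{a}\circ T_{a}$, the two systems of Carnot coordinates at $a$ then differ by the change of variables
\begin{equation*}
\phi:=(\hat{\varepsilon}\circ T_{a})\circ \varepsilon_{a}^{-1}=\hat{\varepsilon}\circ \hat{\varepsilon}_{a}^{-1},
\end{equation*}
and Proposition~\ref{prop:char-Carnot-coord.phi-x-Ow} yields $\phi(x)=x+\Ow(\|x\|^{w+1})$ near $x=0$. It therefore suffices to show $\phi=\op{id}$, which will give $\hat{\varepsilon}=\hat{\varepsilon}_{a}$ and hence $\hat{\varepsilon}\circ T=\varepsilon_{a}$.

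The key structural observation is that the constraints $\brak{\alpha}\leq w_{k}$ and $|\alpha|\geq 2$ in~(\ref{eq-phi-hat}) force a strictly triangular form: if a monomial $x^{\alpha}$ appearing in $\hat{\varepsilon}_{k}-x_{k}$ had $\alpha_{j}\geq 1$ for some index $j$ with $w_{j}\geq w_{k}$, then $\brak{\alpha}\geq w_{k}$, and equality would force $|\alpha|=1$, contradicting $|\alpha|\geq 2$. Hence $\hat{\varepsilon}_{k}-x_{k}$ depends only on $x_{j}$ with $w_{j}<w_{k}$, and the same holds for $\hat{\varepsilon}_{a}$. This recursion on the weight makes $\hat{\varepsilon}_{a}^{-1}$ a polynomial of the same form~(\ref{eq-phi-hat}), and a short bookkeeping with weighted degrees then shows that each component $\phi_{k}(x)-x_{k}$ is a polynomial in $x$ of weighted degree $\leq w_{k}$.

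To conclude, I would use the elementary lemma that any polynomial $p(x)$ of weighted degree $\leq w_{k}$ which is $\op{O}(\|x\|^{w_{k}+1})$ near the origin must vanish identically: substituting $x\mapsto t\cdot y$ and letting $t\to 0^{+}$, the monomial with smallest value of $\brak{\alpha}$ among those with nonzero coefficient would dominate after dividing by an appropriate power of $t$, contradicting the bound. Applied to each $\phi_{k}(x)-x_{k}$, this gives $\phi=\op{id}$, and we are done. The main obstacle is really the weight-triangular bookkeeping that controls the degrees of $\hat{\varepsilon}_{a}^{-1}$ and of the composition $\phi$; once that structural fact is in hand, the rest follows cleanly from Proposition~\ref{prop:char-Carnot-coord.phi-x-Ow} and the polynomial-vanishing argument.
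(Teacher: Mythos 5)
Your proposal is correct and follows essentially the same route as the paper's proof: you force $T=T_{a}$ (the paper defers this to the argument of Proposition~\ref{prop-uni}), exploit the weight-triangular structure of maps of the form~(\ref{eq-phi-hat}) to see that $\hat{\varepsilon}\circ\hat{\varepsilon}_{a}^{-1}$ is again of that form, invoke Proposition~\ref{prop:char-Carnot-coord.phi-x-Ow} to get the $\Ow(\|x\|^{w+1})$ bound, and conclude that the discrepancy vanishes. Your final scaling argument is just a hands-on version of the weight dichotomy the paper obtains from Lemma~\ref{lem-eq-we}, so the two proofs are essentially identical.
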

\begin{proof}
As mentioned in Remark~\ref{rem-phi-hat} we have $ \varepsilon_a = \hat{\varepsilon}_a\circ T_a$. Here $T_a$ is an affine map such that $T_a(a)=0$ and $\hat{\varepsilon}_a(x)$ of the form~(\ref{eq-phi-hat}). 

Conversely, consider a change of coordinate of the form $x\rightarrow (\hat{\phi}\circ T)(x)$,
 where  $T(x)$ is an affine map such that $T(a)=0$ and $\hat{\phi}$ is a polynomial diffeomorphism of the form~(\ref{eq-phi-hat}).
  In the same way as in the proof of Proposition~\ref{prop-uni} it can be shown that $T=T_{a}$. Moreover, by Remark~\ref{rem-phi-hat}
 we know that $\hat{\varepsilon}_{a}(x)$ is a polynomial diffeomorphism of the 
  form~(\ref{eq-phi-hat}). Therefore, in order to complete the proof we only need to show that $\hat{\phi}=\hat{\varepsilon}_{a}$. To achieve this we will make use of the following observation.
  
\begin{claim}
Let $\cE$ be the class of diffeomorphisms of $\R^n$ of the form~(\ref{eq-phi-hat}). Then $\cE$ is a subgroup of the diffeomorphism group of $\R^n$.  
\end{claim}
\begin{proof}[Proof of the Claim] 
Let $\psi(x)$ be a diffeomorphism in the class $\cE$. For $k=1,\ldots, n$, we have
 \begin{equation}
     \psi_{k}(x)=x_{k}+\sum_{\substack{\brak\alpha \leq w_{k}\\|\alpha|\geq 2}}d_{k\alpha}x^{\alpha}, \qquad d_{k\alpha} \in \R. 
     \label{eq:Carnot-coord.classcE}
\end{equation}
In the same way as in the proof of Proposition~\ref{prop:Carnot-coord.form-phi}, we observe that if $\brak\alpha\leq w_k$ and $|\alpha|\geq 2$, then the monomial $x^\alpha$ does not involve variables $x_l$ with $w_l\geq w_k$. Therefore, if $\tilde{\psi}(x)$ is another element of $\cE$, then, for $k=1,\ldots,n$, we have
 \begin{equation*}
     \psi_{k}\circ\tilde{\psi} (x)=\tilde{\psi}_{k}(x)+\sum_{\substack{\brak\alpha \leq w_{k}\\|\alpha|\geq 2}}d_{k\alpha}\prod_{w_l<w_k}\tilde{\psi}_l(x)^{\alpha_l}. 
\end{equation*}
As the components $\tilde{\psi}_l(x)$ of $\tilde{\psi}(x)$ are of the form~(\ref{eq-phi-hat}), it then follows that $ \psi_{k}\circ\tilde{\psi} (x)$ is of the form~(\ref{eq-phi-hat}) for $k=1,\ldots,n$. That is, the diffeomorphism $ \psi\circ\tilde{\psi}(x)$ is in the class $\cE$.   

In addition, substituting $\psi^{-1}(x)$ for $x$ in~(\ref{eq:Carnot-coord.classcE}) and solving for $\psi^{-1}_k(x)$ gives 
 \begin{equation*}
     \psi_{k}^{-1}(x)=x_{k}-\sum_{\substack{\brak\alpha \leq w_{k}\\|\alpha|\geq 2}}d_{k\alpha}\prod_{w_l<w_k}\left(\psi^{-1}_j(x)\right)^{\alpha_j}, \qquad k=1,\ldots, n.  
\end{equation*}
In particular, we see that $\psi^{-1}_k(x)=x_k$ when $w_k=1$. More generally, an induction on $w_k$ shows that $\psi_{k}^{-1}(x)$ is of the form~(\ref{eq-phi-hat}) for $k=1,\ldots,n$. That is, the inverse map $\psi^{-1}(x)$  is in the class $\cE$. Therefore, we see that $\cE$ is a subgroup of the group of diffeomorphism group of $\R^n$. The claim is thus proved. 
\end{proof}
 
Let us go back to the proof of Theorem~\ref{thm:Carnot-coord.unicity-normal2}. The claim ensures us that $ \hat{\phi}\circ \hat{\varepsilon}_{a}^{-1}(x)$ is a diffeomorphism of the form~(\ref{eq-phi-hat}). In particular, each component $ \hat{\phi}_k\circ \hat{\varepsilon}_{a}^{-1}(x)-x_k$ is either zero or has weight~$\leq w_k$. Moreover, as the coordinate change $x\rightarrow \phi(x)$ provides us with Carnot coordinates, Proposition~\ref{prop:char-Carnot-coord.phi-x-Ow} ensures us that, in the Carnot coordinates provided by $\varepsilon_a$, the diffeomorphism $\phi$ has a behavior of the form~(\ref{eq:char-Carnot-coord.phi-x-Ow}) near the origin. This means that $\phi \circ \varepsilon_{a}^{-1}(x)=x+\Ow(\|x\|^{w+1})$ near $x=0$. As $\phi \circ \varepsilon_{a}^{-1}(x)= \hat{\phi}_k\circ \hat{\varepsilon}_{a}^{-1}(x)$ we see that $\hat{\phi}_k\circ \hat{\varepsilon}_{a}^{-1}(x)-x=\Ow(\|x\|^{w+1})$ near $x=0$. By Lemma~\ref{lem-eq-we} this implies that  $ \hat{\phi}\circ \hat{\varepsilon}_{a}^{-1}(x)- x_k$ has weight~$\geq w_k+1$. This is possible only if $ \hat{\phi}_k\circ \hat{\varepsilon}_{a}^{-1}(x)-x_k=0$, and so  $\hat{\phi}\circ \hat{\varepsilon}_{a}^{-1}(x)=x$, i.e., 
 $\hat{\phi}(x)= \hat{\varepsilon}_{a}(x)$. The proof is complete. 
 \end{proof}

\begin{example}[Step 1 Case]
 Assume that $r=1$, so that $H_1=H_r=TM$. Given any frame $(X_1,\ldots, X_n)$ near a some given point $a\in M$, we saw in \S\S~\ref{subsec:examples-Carnot-coord} that the linearly adapted coordinates $y=T_ax$ are Carnot coordinates. Therefore, in this case we simply have $\varepsilon_a(x)=T_ax$. 
\end{example}

\begin{example}[Heisenberg Manifolds]
 Let $(M^{d+1},H)$ be a Heisenberg manifold, where $H_1=H$ is a hyperplane bundle. Given any $H$-frame near some given point $a\in M$, it was shown in  \S\S\ref{subsec:examples-Carnot-coord} that $\psi_a=\op{id}$. It then follows that in this case we have $\varepsilon_a(x)=\phi(T_ax)$, where $\phi(x)$ is given by~(\ref{eq:Carnot-coord.phi-r2}).  
\end{example}

\begin{remark}
 In the setting of Heisenberg manifolds, the $\varepsilon$-Carnot coordinates are called anti-symmetric $y$-coordinates in~\cite{BG:CHM} and Heisenberg coordinates in~\cite{Po:Pacific1}. It was  observed in~\cite{Po:Pacific1} that in such coordinates the nilpotent approximation  at a given point $a$ is given by the tangent group $GM(a)$
\end{remark}

\subsection{$\varepsilon$-Carnot Coordinates on Graded Nilpotent Lie Groups}\label{sec:normal-group} 
Let us now focus on the construction of $\varepsilon$-Carnot coordinates on graded nilpotent Lie groups. Thus, let $G$ be a graded nilpotent Lie group of  step $r$ and dimension $n$ with unit $e$. 
Its Lie algebra $\fg=TG(e)$ then is equipped with a grading $\fg=\fg_1\oplus \cdots \oplus \fg_r$ satisfying~(\ref{eq:Carnot.grading-bracket}). 
In the same way as in Section~\ref{sec:tangent-group}, this grading gives rise to a Carnot filtration $H_1\subset \cdots \subset H_r=TG$ with
 $H_w=E_1\oplus \cdots \oplus E_w$, where $E_j$ is obtained by left-translating $\fg_j$ over $G$. In what follows, we let $(w_1,\ldots , w_n)$ be the weight sequence of $(H_1,\ldots, H_r)$. In addition, given any $a\in G$, we let $\lambda_a:G\rightarrow G$ be the left-multiplication by $a$.

\begin{definition}\label{def:Group-Carnot-coord.graded-basis}
 A \emph{graded basis} of the Lie algebra $\fg$ is any basis $(\xi_1,\ldots, \xi_n)$ such that $\xi_j \in \fg_{w_j}$ for $j=1,\ldots, n$.  
\end{definition}

 Let $(\xi_1,\ldots, \xi_n)$ be a graded basis. Then the structure constants $L_{ij}^k$ of $\fg$ with respect to this basis are such that
\begin{equation*}
  [\partial_{i},\partial_{j}]=\sum_{w_i+w_j=w_k} L_{ij}^k \xi_k. 
\end{equation*}
In particular, we see that $L_{ij}^k=0$ when $w_i+w_j\neq w_k$. For $j=1,\ldots, n$, we let $X_j$ be the left-invariant vector field on $G$ such that $X_j(e)=\xi_j$. Then $(X_1,\ldots, X_n)$ is a global $H$-frame of $TG$. In addition, as $G$ is a connected simply connected nilpotent Lie group the exponential map $\exp:\fg \rightarrow G$ is a global diffeomorphism. Therefore, we obtain a diffeomorphism $\exp_X: \R^n\rightarrow G$ by letting
\begin{equation}
 \exp_X(x)= \exp\left(x_1X_1 +\cdots + x_nX_n\right), \qquad x\in \R^n. 
 \label{eq:vareps-Carnot.canonical-coord-group}
\end{equation}
The inverse map of $\exp_X$ is a global chart. The coordinates that it defines are the usual canonical coordinates of the first kind on $G$ associated with the basis $(\xi_1,\ldots, \xi_n)$. 

Let $G^0$ be the graded nilpotent Lie group obtained by equipping $\R^n$ with the Dynkin product~(\ref{eq:GM.group-law}) associated with the structure constants $L_{ij}^k$.  In addition, for $j=1,\ldots, n$ let $X_j^0$ be the left-invariant vector field that agrees with $\partial_j$ at $x=0$. It is given by the formula~(\ref{eq:GM.Xj}) 
associated with the structure constants $L_{ij}^k$, $w_i+w_j=w_k$. It follows from their definitions that $X_1, \ldots, X_n$ satisfy the bracket relations~(\ref{eq:Nilpotent.structure-constants-Xj(a)}) with the same coefficients $L_{ij}^k$. Thus, in the notation of~(\ref{eq:Carnot-mfld.brackets-H-frame})  the coefficients $L_{ij}^k(x)$, $w_i+w_j\leq w_k$, are given by
\begin{equation}
  L_{ij}^k(x)=\left\{ 
  \begin{array}{cl}
     L_{ij}^{k} & \text{if $w_{i}+w_{j}=w_k$},\\ 
      0 & \text{if $w_{i}+w_{j}<w_k$}.
  \end{array} \right.  
  \label{eq:Charac-Carnot.structure-constants}
\end{equation}
Combining this with~(\ref{eq:GM.Xj}) we then see that, for every $a\in G$, the model vector field $X_j^a$ is given by the very same formula as that for the vector field $X_j^0$. Thus, 
\begin{equation}\label{eq-xax}
    X_{j}^{a}=X_{j}^0 \qquad \text{for  all $a\in G$ and $j=1,\ldots,n$}.
\end{equation}

\begin{proposition}\label{prop-static}
   In the canonical coordinates given by~(\ref{eq:vareps-Carnot.canonical-coord-group}) we have 
    \begin{equation*}
       \varepsilon_{a}^{X}(x)=a^{-1}\cdot x=(-a)\cdot x  \qquad \text{for all $a,x\in \R^n$},  
    \end{equation*}
    where $\cdot$ is the product law of $G^0$. 
\end{proposition}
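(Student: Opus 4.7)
The plan is to apply the uniqueness statement in Theorem~\ref{thm:Carnot-coord.unicity-normal2} to the candidate map $\phi_a(x):=(-a)\cdot x$. Two things need to be checked: (a) $\phi_a$ provides Carnot coordinates at $a$ adapted to $(X_1,\ldots,X_n)$, and (b) $\phi_a$ admits a factorisation $\hat{\varepsilon}\circ T$ with $T$ affine satisfying $T(a)=0$ and $\hat{\varepsilon}$ a polynomial diffeomorphism of the form~(\ref{eq-phi-hat}). Together these force $\phi_a=\varepsilon_a^{X}$ by the uniqueness clause of Theorem~\ref{thm:Carnot-coord.unicity-normal2}.

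For (a), I would observe that in the canonical coordinates~(\ref{eq:vareps-Carnot.canonical-coord-group}) the group $G$ is identified with $G^{0}$, so each $X_j$ becomes the left-invariant vector field $X_j^{0}$, which equals $X_j^{a}$ for every $a$ by~(\ref{eq-xax}). In these coordinates the chart $\phi_a$ is simply the expression of the canonical chart of the first kind centred at $a$, because the flow identity for left-invariant vector fields yields $x\mapsto a\cdot x$ as the realisation of~(\ref{eq:Can-coord.1st-kind}) at $a$. Proposition~\ref{prop:Canonical-coord.1st-kind} then guarantees that $\phi_a$ is a system of Carnot coordinates at $a$ adapted to $(X_1,\ldots,X_n)$.

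For (b), let $B(a)=(B(a)_{jl})$ be the matrix determined by $X_j^{a}(a)=\sum_{l}B(a)_{jl}\partial_l$. Formula~(\ref{eq:GM.Xj}) combined with~(\ref{eq:Charac-Carnot.structure-constants}) shows that $B(a)_{jl}=\delta_{jl}$ whenever $w_l\leq w_j$, so $B(a)$ is unit upper-triangular in the weight ordering and invertible. I would take $T_a(x):=(B(a)^t)^{-1}(x-a)$, i.e.\ the affine map of Lemma~\ref{lem-affine}, and set $\hat{\varepsilon}(y):=\phi_a(T_a^{-1}(y))=(-a)\cdot(B(a)^t y+a)$. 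Plainly $\hat{\varepsilon}(0)=(-a)\cdot a=0$, and because left-multiplication by $a^{-1}$ preserves every left-invariant vector field $X_j^{a}$, its differential at $a$ sends $X_j^{a}(a)$ to $\partial_j$; this forces $\phi_a'(a)=(B(a)^t)^{-1}$ and hence $\hat{\varepsilon}'(0)=\op{id}$. The $j$-th component of $v:=B(a)^t y+a$ equals $y_j+\sum_{w_l<w_j}B(a)_{lj}y_l+a_j$, of $w$-weight at most $w_j$ in $y$. Plugging this into the Dynkin product formula~(\ref{eq:GM.group-law2}) for $(-a)\cdot v$, every monomial $(-a)^\alpha v^{\beta}$ appearing in $\hat{\varepsilon}_k(y)$ has $y$-weight bounded by $\brak{\beta}\leq\brak{\alpha}+\brak{\beta}=w_k$. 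Combined with $\hat{\varepsilon}(0)=0$ and $\hat{\varepsilon}'(0)=\op{id}$, this puts $\hat{\varepsilon}$ in the required form~(\ref{eq-phi-hat}).

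The main obstacle is the $w$-weight estimate in (b); it relies on the weight triangularity of $B(a)$ (which stems from the grading of $\fg$) and on the structure of the Dynkin product dictated by~(\ref{eq:Charac-Carnot.structure-constants}). The remaining steps reduce to bookkeeping with coordinate identifications and to exploiting left-invariance on $G^{0}$.
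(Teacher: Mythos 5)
Your proof is correct, and its skeleton is the same as the paper's: both arguments reduce the identity to the uniqueness clause of Theorem~\ref{thm:Carnot-coord.unicity-normal2} by checking that $\lambda_{-a}(x)=(-a)\cdot x$ (i) yields Carnot coordinates at $a$ adapted to the frame and (ii) factors as $\hat{\varepsilon}\circ T$ with $T$ affine, $T(a)=0$, and $\hat{\varepsilon}$ of the form~(\ref{eq-phi-hat}). Where you genuinely differ is in (i): the paper verifies the criterion of Proposition~\ref{prop:Carnot-criterion} directly, using that each $X_j^0$ is $\delta_t$-homogeneous and left-invariant, so that linear adaptedness and the limit $t^{w_j}\delta_t^{*}\bigl((\lambda_{-a})_{*}X_j^0\bigr)=X_j^0=X_j^a$ are immediate; you instead recognize $\lambda_{-a}$ as the canonical first-kind chart centered at $a$ (via $\exp(x_1X_1^0+\cdots+x_nX_n^0)(a)=a\cdot x$, i.e.\ Lemma~\ref{lem:Carnot-coord.admissibility-Xa} combined with left-invariance) and invoke Proposition~\ref{prop:Canonical-coord.1st-kind}. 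This is non-circular, since that proposition is established independently in Section~\ref{sec:Canonical-coordinate}, and it shortens the verification; note, however, that both your route and the paper's rest on the preliminary identification of $(X_1,\ldots,X_n)$ with $(X_1^0,\ldots,X_n^0)$ in the coordinates~(\ref{eq:vareps-Carnot.canonical-coord-group}), i.e.\ on $\exp_X$ being a group isomorphism from $G^0$ onto $G$, which you assert but should record explicitly (it is checked as in the proof of Lemma~\ref{lem:Carnot-coord.admissibility-Xa}). For (ii) your computation is essentially the paper's in different notation: the paper Taylor-expands $\lambda_{-a}$ at $x=a$ and exploits the triangular form~(\ref{eq-lax-1}) of $\lambda_a'(0)$, and since $\lambda_a'(0)=B(a)^{t}$ and $\lambda_{-a}'(a)=(B(a)^{t})^{-1}$, your weight bookkeeping through the Dynkin product~(\ref{eq:GM.group-law2}) amounts to the same estimate; both yield $\hat{\varepsilon}=\lambda_{-a}\circ T_a^{-1}$ of the form~(\ref{eq-phi-hat}), after which the uniqueness statement gives $\lambda_{-a}=\varepsilon_a^{X}$.
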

\begin{proof}
In the same way as in the proof of Lemma~\ref{lem:Carnot-coord.admissibility-Xa}, it  can be checked that $\exp_X$ is a Lie group isomorphism from $G^0$ onto $G$.
Then $(\exp_X)_*X_j^0$ is a left-invariant vector field on $G$ such that
\begin{equation*}
 \left(\exp_X\right)_*X_j^0(e)=\left(\exp_X\right)'(0)\left[X_j^0(0)\right]=\exp_X'(0)\partial_j=\partial_j\exp_X(0). 
\end{equation*}
As $\partial_j\exp_X(0)=\left.\partial_j\exp(x_jX_j)(e)\right|_{x=0}=X_j(e)$, we see that $(\exp_X)_*X_j^0(e)=X_j(e)$. By left invariance the vector fields $(\exp_X)_*X_j^0$ and $X_j$ agree everywhere. Thus, in our canonical coordinates the $H$-frame $(X_1,\ldots, X_n)$ is identified with the frame $(X_1^0,\ldots, X_n^0)$.
We also observe that, as the product of $G^0$ is a Dynkin product, the inversion on $G^0$ is given by
\begin{equation}
 x^{-1}=-x \qquad  \text{for all $x\in G$}. 
 \label{eq:group-Carnot-coord.inversion}
\end{equation}

Given $a\in \R^n$, let $\lambda_a:\R^n\rightarrow \R^n$ be the left-multiplication by $a$ with respect to the product law of $G^0$. Then the proof only amounts to show that 
$\varepsilon_a(x)=\lambda_{-a}(x)$ for all $x\in \R^n$. Furthermore, by Theorem~\ref{thm:Carnot-coord.unicity-normal2} in order to achieve this we only have to check the following properties: 
\begin{enumerate}
 \item[(i)] The change of variables $x\rightarrow \lambda_{-a}(x)$ provides us with Carnot coordinates at $a$ adapted to $(X_1^0,\ldots, X_n^0)$. 
 
 \item[(ii)] The map $\lambda_{-a}(x)$ can be put in the form  $\hat{\varepsilon}\circ T(x)$, where $T(x)$ is an affine map such that $T(a)=0$ and $\hat{\varepsilon}(x)$ is a polynomial map of the form~(\ref{eq-phi-hat}). 
\end{enumerate}

To check (i) we note that, for $j=1,\ldots, n$, the vector field $X_{j}^0$ is left-invariant and agrees with $\partial_j$ at $x=0$. Thus, $[(\lambda_{-a})_{*}X_{j}^0](0)=X_{j}^0(0)=\partial_{j}$ for $j=1,\ldots,n$, which shows that the coordinates $y=\lambda_{-a}(x)$ are linearly adapted at $a$ to $(X_1^0,\ldots, X_n^0)$. 
Moreover, the homogeneity and left-invariance of the vector fields $X_{j}^0$ imply that
$ t^{w_{j}}\delta_{t}^{*}\left( (\lambda_{-a})_{*}X_{j}^0\right)=t^{w_{j}}\delta_{t}^{*}X_{j}^0=X_{j}^0$ for all $t\in \R^*$. 
Combining this with~(\ref{eq-xax}) and Proposition~\ref{prop:Carnot-criterion} shows that $\lambda_{-a}$ provides us with Carnot coordinates at $a$ adapted to $(X_1^0,\ldots, X_n^0)$. 

It remains to check (ii).  It follows from~(\ref{eq:GM.group-law2}) that, for every $k=1,\ldots,n$, the component 
$\lambda_{-a}(x)_{k}=[(-a)\cdot x ]_k$ is a linear combination of monomials 
$x^{\alpha}$ with $1\leq \brak \alpha\leq w_{k}$. Combining this with the fact that $\lambda_{-a}(a)=0$ we see  that the Taylor expansion of the component $\lambda_{-a}(x)_k$ at $x=a$ takes the form, 
\begin{equation}\label{eq-lax-2}
    \lambda_{-a}(x)_{k}= \lambda_{-a}'(a)_k(x-a)+R_{k}(x-a), \quad \text{where}\ R_{k}(y):=\sum_{\substack{\brak\alpha\leq 
    w_{k}\\ |\alpha|\geq 2}}\frac{1}{\alpha!}\partial^{\alpha}\lambda_{-a}(a)_ky^{\alpha}.
\end{equation}

Set $T(x)= \lambda_{-a}'(a)(x-a)$. Thanks to~(\ref{eq:group-Carnot-coord.inversion}) we have $\lambda_{-a}'(a)^{-1}= [(\lambda_a^{-1})'(a)]^{-1}=\lambda_a'(0)$. Therefore, if we set 
$\hat{R}_{k}(x):=[R_{k}\circ \lambda_{a}'(0)](x)$, then we can rewrite~(\ref{eq-lax-2}) as
\begin{equation}
 \lambda_{-a}(x)= T(x)_k+ R_k\circ \lambda_{-a}'(a)^{-1}\circ T(x)= T(x)_k+\hat{R}_k\circ T(x). 
 \label{eq:Group-Carnot-coord.lambdaaTkRk}
\end{equation}
Set $\hat{R}(x)=(\hat{R}_1(x),\ldots, \hat{R}_n(x))$ and $\hat{\varepsilon}(x)=x+\hat{R}(x)$. Then~(\ref{eq:Group-Carnot-coord.lambdaaTkRk}) shows that 
\begin{equation*}
  \lambda_{-a}(x)= \hat{\varepsilon}\circ T (x). 
\end{equation*}
By definition $T(x)$ is an affine map such that $T(a)=0$. Therefore, in order to check (ii) it only remains to show that 
the components of $\hat{\varepsilon}(x)$ are of the form~(\ref{eq-phi-hat}). 

It also follows from~(\ref{eq:GM.group-law2}) that each component $\lambda_{a}(x)_{k}=(a\cdot x)_k$ is affine with slope 1 with respect to the variable $x_k$ 
and is independent of the variables $x_{j}$ with $w_{j}\geq w_{k}$ and $j\neq k$. This implies that 
$\partial_{j}\lambda_{a}(0)_{k}=\delta_{jk}$ for $w_{j}\geq w_{k}$, and so we have
\begin{equation}\label{eq-lax-1}
    \left(\lambda_{a}'(0)x\right)_{k}=x_{k}+\sum_{w_{j}<w_{k}}\partial_{j}\lambda_{a}(0)_{k}x_{j}, \qquad k=1,\ldots, n. 
\end{equation}
Combining this with the formula for $R_k(x)$ in~(\ref{eq-lax-2}) we deduce that $\hat{R}_{k}(x)$ is a 
linear combination of monomials $x^{\alpha}$ with $\brak \alpha\leq w_{k}$ and $|\alpha|\geq 2$. Therefore, for every $k=1,\ldots, n$ 
the component $\hat{\varepsilon}_k(x)=x_k+ \hat{R}_{k}(x)$ is of the form~(\ref{eq-phi-hat}). This shows that  $\lambda_{-a}(x)$ satisfies (ii). The proof is complete. 
\end{proof}

Given any $a\in G$, let $\lambda_a:G\rightarrow G$ be the left-multiplication by $a$ on $G$. Then we have the following corollary of Proposition~\ref{prop-static}.  

\begin{corollary}\label{prop:Graded-group.Carnot-coord}
 Let $a\in G$. Then the global chart $ (\lambda_{a}\circ \exp_X)^{-1}:G\rightarrow \R^n$ provides us with $\varepsilon$-Carnot coordinates at $a$ that are adapted to the $H$-frame $(X_1,\ldots, X_n)$. 
\end{corollary}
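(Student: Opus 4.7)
The plan is to transport the statement back to the model group $G^0$ via the canonical chart $\kappa:=\exp_X^{-1}:G\to\R^n$, and then invoke Proposition~\ref{prop-static} directly. I would first recall from the proof of Proposition~\ref{prop-static} that $\exp_X$ is a Lie group isomorphism from $G^0$ onto $G$ which carries the left-invariant $H$-frame $(X_1^0,\ldots,X_n^0)$ on $G^0$ to $(X_1,\ldots,X_n)$ on $G$. By functoriality of $\exp_X$ as a group homomorphism, this yields the intertwining identity
\begin{equation*}
\kappa\circ \lambda_a = \lambda^{G^0}_{\kappa(a)}\circ \kappa \qquad \text{for all } a\in G,
\end{equation*}
where $\lambda^{G^0}_b$ denotes left-multiplication by $b$ in $G^0$. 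Combined with the inversion formula~(\ref{eq:group-Carnot-coord.inversion}) in $G^0$, this gives
\begin{equation*}
(\lambda_a\circ \exp_X)^{-1}=\kappa\circ \lambda_{a^{-1}}=\lambda^{G^0}_{\kappa(a)^{-1}}\circ\kappa=\lambda^{G^0}_{-\kappa(a)}\circ\kappa.
\end{equation*}

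Next, I would apply Proposition~\ref{prop-static} to the graded nilpotent Lie group $G^0$ equipped with the canonical $H$-frame $(X_1^0,\ldots,X_n^0)$, at the base point $\kappa(a)\in\R^n$. On $G^0$ the canonical coordinates of the first kind~(\ref{eq:vareps-Carnot.canonical-coord-group}) are simply the identity chart of $\R^n$, so Proposition~\ref{prop-static} identifies the $\varepsilon$-Carnot coordinate map at $\kappa(a)$ as
\begin{equation*}
\varepsilon_{\kappa(a)}(x)=(-\kappa(a))\cdot x=\lambda^{G^0}_{-\kappa(a)}(x), \qquad x\in\R^n.
\end{equation*}
Putting the two displays together yields $(\lambda_a\circ \exp_X)^{-1}=\varepsilon_{\kappa(a)}\circ \kappa$, which is precisely the $\varepsilon$-Carnot chart at $a$ arising from the initial local chart $\kappa$ (compare Corollary~\ref{cor:vareps-Carnot.all-Carnot-coord} with the remainder $\Theta\equiv 0$).

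There is no substantive obstacle beyond bookkeeping: the content of the corollary is really just Proposition~\ref{prop-static} pushed forward to the ambient group $G$ through the canonical chart $\exp_X^{-1}$, and one only has to verify that the group isomorphism $\exp_X$ intertwines left translation and sends the model $H$-frame to the given one. Both of these are already built into the construction of $G^0$ and were recorded in the proof of Proposition~\ref{prop-static}.
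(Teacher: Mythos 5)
Your argument is correct and is essentially the paper's own proof: both invoke Proposition~\ref{prop-static} to identify the $\varepsilon$-Carnot chart at $a$ with $x\mapsto \exp_X^{-1}(a)^{-1}\cdot \exp_X^{-1}(x)$ in the canonical coordinates, and then use that $\exp_X$ is a group isomorphism from $G^0$ onto $G$ (together with the inversion formula~(\ref{eq:group-Carnot-coord.inversion})) to rewrite this map as $(\lambda_a\circ\exp_X)^{-1}$. The only difference is cosmetic bookkeeping in how Proposition~\ref{prop-static} is quoted.
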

\begin{proof}
We know by Proposition~\ref{prop-static} that the chart $\lambda_{\exp_X^{-1}(a)}^{-1} \circ \exp_X^{-1}:G\rightarrow \R^n$ provides us with $\varepsilon$-Carnot coordinates at $a$ adapted to $(X_1,\ldots, X_n)$. Furthermore, as $\exp_X$ is a group isomorphism from $G^0$ onto $G$, for all $x\in G$, we have 
\begin{equation*}
\lambda_{\exp_X^{-1}(a)}^{-1}\circ \exp_X^{-1}(x)= \left(\exp_X^{-1}(a)\right)^{-1}\cdot \exp_X^{-1}(x)=\exp_X^{-1}\left(a^{-1}\cdot x\right) 
= (\lambda_{a}\circ \exp_X)^{-1}(x). 
\end{equation*}
This gives the result. 
\end{proof}

\section{Parameter Dependence of  $\varepsilon$-Carnot Coordinates and Group Law}\label{sec:H-frame-dependence}
In this section, we give a closer look at the dependence of the $\varepsilon_{a}$-map with respect to the base 
point $a$ and the $H$-frame and compare this map to the group law of $GM(a)$ in Carnot coordinates.  

We observe that, once we are given local coordinates $x=(x_{1},\ldots,x_{n})$, the construction of the $\varepsilon$-Carnot coordinate map
$\varepsilon_{a}$ only depends on the coefficients in these local coordinates of the vector fields of the $H$-frame 
$X=(X_{1},\ldots,X_{n})$. As different choices of $H$-frames may produce different $\varepsilon$-Carnot coordinate maps, we shall denote by 
$\varepsilon_{a}^{X}$ the $\varepsilon$-Carnot coordinate map at $a$ associated with the $H$-frame $X=(X_{1},\ldots,X_{n})$. 

\subsection{Dependence on the $H$-frame}
In order to study the dependence of $\varepsilon_{a}^{X}$ with respect the $H$-frame $X=(X_{1},\ldots,X_{n})$ we shall 
denote by $B_{X}(x)$ the matrix of the coefficients of this $H$-frame in the local coordinates 
$x=(x_{1},\ldots,x_{n})$. Namely,
\begin{equation*}
    B_{X}(x)=\left( b_{jk}(x)\right), \quad \text{where } X_{j}=\sum_{1\leq k \leq n}b_{jk}(x)\partial_{x_{k}}.
\end{equation*}As it follows from the proof of Lemma~\ref{lem-affine} the affine map $T_{a}(x)$ is given by
\begin{equation*}
    T_{a}(x)=(B_{X}(a)^{t})^{-1}(x-a),
\end{equation*}where $B_{X}(a)^{t}$ is the transpose matrix of $B_{X}(a)$.

\begin{proposition}\label{prop:Carnot-prop.polynomial-X}
    For $k=1,\ldots,n$, we have
    \begin{equation}
        \varepsilon^{X}_{a}(x)_{k}=T_{a}(x)_{k}+\sum_{\substack{\brak\alpha\leq w_{k}\\|\alpha|\geq 2}} 
        c_{k\alpha}\left(B_{X}(a)\right) T_{a}(x)^{\alpha},
        \label{eq:Carnot-prop.epsX}
    \end{equation}where $T_{a}(x)=(B_{X}(a)^{t})^{-1}(x-a)$ and  each coefficient $c_{k\alpha}\left(B_{X}(a)\right)$  is a universal polynomial in the coefficients 
    of the matrices $(B_{X}(a)^{t})^{-1}$ and $\partial^{\beta}_{x}B_{X}(a)$ with $|\beta|\leq \brak\alpha-1$. 
\end{proposition}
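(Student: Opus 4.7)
The plan is to read off $c_{k\alpha}(B_{X}(a))$ from the factorization $\varepsilon_{a}^{X}=\hat{\varepsilon}_{a}\circ T_{a}$ of Remark~\ref{rem-phi-hat}, recalling that $T_{a}(x)=(B_{X}(a)^{t})^{-1}(x-a)$ and that each component of $\hat{\varepsilon}_{a}$ has the shape
\begin{equation*}
    (\hat{\varepsilon}_{a})_{k}(y)=y_{k}+\sum_{\substack{\brak\alpha\leq w_{k}\\ |\alpha|\geq 2}} c_{k\alpha}\,y^{\alpha},\qquad c_{k\alpha}\in \R.
\end{equation*}
Substituting $y=T_{a}(x)$ immediately yields the shape~(\ref{eq:Carnot-prop.epsX}), with $c_{k\alpha}(B_{X}(a))$ being exactly the scalars $c_{k\alpha}$ above. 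Thus the whole content of the proposition reduces to the polynomial dependence of these scalars on the claimed data.

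Next I would work in the linearly adapted $y$-coordinates produced by $T_{a}$, in which the frame takes the form $X_{j}=\partial_{y_{j}}+\sum_{k}\tilde{b}_{jk}(y)\partial_{y_{k}}$ with $\tilde{b}_{jk}(0)=0$. Combining the recursive formulas of Remark~\ref{rem-ab} (for the coefficients of $\hat{\psi}_{a}$) with Proposition~\ref{prop:Carnot-coord.form-phi} and the recursion of Lemma~\ref{lem:nilpotent.flowX} (for $\exp_{X^{(a)}}^{-1}$), an induction on $\brak\alpha$ shows that each scalar $c_{k\alpha}$ is a universal polynomial in the derivatives $\partial^{\gamma}_{y}\tilde{b}_{jl}(0)$ with $|\gamma|\leq\brak\alpha-1$. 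The mechanism is that each ingredient---a coefficient of $\hat{\psi}_{a}$ or a coefficient of the model vector fields $X_{j}^{(a)}$---is, by homogeneity, a polynomial in derivatives $\partial^{\gamma}\tilde{b}_{jl}(0)$ with $\brak\gamma+w_{j}=w_{l}$, and the recursion only ever combines ingredients whose associated weight is strictly smaller than $\brak\alpha$.

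Finally, the transfer back to the original $x$-coordinates is a chain-rule computation. Since $y=(B_{X}(a)^{t})^{-1}(x-a)$, one has
\begin{equation*}
    \tilde{b}_{jl}(y)=\sum_{k}\bigl((B_{X}(a)^{t})^{-1}\bigr)_{lk}\,b_{jk}\bigl(a+B_{X}(a)^{t}y\bigr),
\end{equation*}
and Fa\`a di Bruno's formula, applied to the affine substitution $y\mapsto a+B_{X}(a)^{t}y$, expresses each $\partial^{\gamma}_{y}\tilde{b}_{jl}(0)$ as a universal polynomial, with coefficients in the entries of $(B_{X}(a)^{t})^{-1}$ and of $B_{X}(a)$, in the partial derivatives $\partial^{\delta}_{x}b_{jm}(a)$ for $|\delta|\leq|\gamma|$. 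Inserting this into the expression from the previous paragraph yields $c_{k\alpha}(B_{X}(a))$ as a universal polynomial in the entries of $(B_{X}(a)^{t})^{-1}$ and of $\partial^{\beta}_{x}B_{X}(a)$ with $|\beta|\leq\brak\alpha-1$, the entries of $B_{X}(a)$ itself being absorbed into the $|\beta|=0$ case.

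The hard part will be the sharp bound $|\gamma|\leq\brak\alpha-1$ in the second step, rather than the looser bound $|\gamma|\leq w_{k}-1$ that would follow directly from the constructions of $\hat{\psi}_{a}$ and $\exp_{X^{(a)}}^{-1}$. Establishing it requires a careful induction on the anisotropic weight $\brak\alpha$, exploiting that in any $w$-homogeneous polynomial diffeomorphism of the form~(\ref{eq-phi-hat}) the coefficient of the monomial $y^{\alpha}$ is built only from data of weight strictly smaller than $\brak\alpha$.
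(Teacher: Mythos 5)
Your proposal takes essentially the same route as the paper's proof: reduce via the factorization $\varepsilon_a^X=\hat{\varepsilon}_a\circ T_a$ of Remark~\ref{rem-phi-hat} to the polynomial dependence of the scalars $c_{k\alpha}$, control these through the recursive constructions of $\hat{\psi}_a$ (Remark~\ref{rem-ab}) and of $\exp_{X^{(a)}}^{-1}$ (Proposition~\ref{prop:Carnot-coord.form-phi}, Lemma~\ref{lem:nilpotent.flowX}) by induction on the weight, and then transfer back to the original coordinates through the affine substitution. The ``careful induction'' you defer at the end is precisely what the paper carries out explicitly, namely expressing the derivatives $\partial^{\alpha}_{\overline{x}}\psi_a(0)_k$ and $\partial^{\alpha}_{\tilde{x}}\psi_a^{-1}(0)_k$ as universal polynomials in the $\partial^{\beta}_{\overline{x}}\overline{b}_{jl}(0)$ with $|\beta|\leq|\alpha|-1$ and keeping track of the weight constraints $\brak\beta+w_j=w_l\leq w_k$; note also that, the substitution $\overline{x}\mapsto a+B_X(a)^t\overline{x}$ being affine, the plain chain rule (the paper's formula~(\ref{eq-poly-2})) suffices where you invoke Fa\`a di Bruno.
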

\begin{proof}
    The proof is a consequence of a series of reductions. First, in order to distinguish between the various types of 
    coordinates at stake we set 
    \begin{equation*}
        \overline{x}=T_{a}(x), \qquad \tilde{x}=\psi_{a}(\overline{x})=\psi_{a}\circ T_{a}(x), \qquad 
        \hat{x}=\hat{\varepsilon}_{a}(\tilde{x})=\varepsilon_{a}(x).
    \end{equation*}Thus, $\overline{x}$ (resp., $\tilde{x}$, $\hat{x}$) are the linearly adapted coordinates (resp., privileged coordinates, 
    Carnot coordinates) defined by $T_{a}$ (resp., $\psi_{a}\circ T_{a}$, $\varepsilon_{a}$). In addition, for $j=1,\ldots, n$, set
    \begin{equation*}
        (T_{a})_{*}X_{j}=\sum_{1\leq j \leq n}\overline{b}_{jk}(\overline{x})\partial_{\overline{x}_{k}} \quad 
        \text{and} \quad (\psi_{a}\circ T_{a})_{*}X_{j}= \sum_{1\leq j \leq n}\tilde{b}_{jk}(\tilde{x})\partial_{\tilde{x}_{k}}.
    \end{equation*}In other words, the coefficients $\overline{b}_{jk}(\overline{x})$ (resp., 
    $\tilde{b}_{jk}(\tilde{x})$) are the coefficients of the vector fields $X_{1},\ldots, X_n$ in the coordinates $\overline{x}$ 
    (resp., $\tilde{x}$). Note also that as $T_{a}x=(B_{X}(a)^{t})^{-1}(x-a)$, the coefficients $b_{jk}(x)$ and $\overline{b}_{jk}(\overline{x})$ are related by
    \begin{equation*}
        \overline{b}_{jk}(\overline{x})=\sum_{1\leq l \leq n}b_{jl}\left(a +B_{X}(a)^{t}\overline{x}\right)(B_{X}(a)^{t})^{-1}_{kl}, \qquad j,k=1,\ldots, n.
    \end{equation*}It then follows that each partial derivative $\partial^{\alpha}_{\overline{x}}\overline{b}_{jk}(0)$ 
    is of the form, 
    \begin{equation}\label{eq-poly-2}
       \partial^{\alpha}_{\overline{x}}\overline{b}_{jk}(0)=\sum_{1\leq l \leq n}\sum_{|\beta|=\alpha} d_{jl\alpha}(B_{X}(a))
       \partial^{\beta}_{x}b_{jl}(a)(B_{X}(a)^{t})^{-1}_{kl},
    \end{equation}where the coefficients $d_{jl\alpha}(B_{X}(a))$ are polynomials in the entries of the matrix $B_{X}(a)$. 
    
    Bearing this in mind, thanks to Remark~\ref{rem-phi-hat} we know that
    \begin{equation*}
       \varepsilon_{a}(x)_{k}=T_{a}(x)_{k}+\sum_{\substack{\brak\alpha\leq w_{k}\\|\alpha|\geq 
        2}}c_{k\alpha}T_{a}(x)^{\alpha}, \quad 
        j=1,\ldots,n,
    \end{equation*}where the coefficients $c_{k\alpha}$ are as follows:
    \begin{itemize}
        \item[-] For $\brak\alpha=w_{k}$ they agree with the coefficients $c_{k\alpha}$ in~(\ref{eq:Carnot.eps-cjalpha}).  
    
        \item[-] For $\brak \alpha<w_{k}$ they are universal polynomials in the coefficients $c_{l\beta}$ with $\brak\beta=w_{l}<w_{k}$
        and the coefficients $a_{l\gamma}$ in~(\ref{eq-form-h}) with $\brak \gamma<w_{l}<w_{k}$. 
      \end{itemize}
     Combining this with~(\ref{eq-poly-2}) we see that in order to complete the proof it is enough to show that each coefficient 
    $c_{k\alpha}$ is a universal polynomial in the partial derivatives  
    $\partial^{\beta}_{\overline{x}}\overline{b}_{jl}(0)$ with $|\beta|\leq |\alpha|-1$. 
   
    Furthermore, by Remark~\ref{rem-ab} and Lemma~\ref{prop:Carnot-coord.form-phi} we know that
    \begin{itemize}
        \item[-] Each coefficient $a_{k\alpha}$, $\brak\alpha<w_k$, is a universal polynomial in the partial derivatives 
        $\partial^{\beta}_{\overline{x}}\overline{b}_{jl}(0)$ with $|\beta|\leq |\alpha|-1$.   
    
        \item[-] Each coefficient $c_{k\alpha}$, $\brak\alpha=w_{k}$, is a universal polynomial in the partial 
        derivatives $\partial_{\tilde{x}}^{\beta}\tilde{b}_{jl}(0)$ with $ \brak\beta+w_j=w_{l}\leq w_k$.   
    \end{itemize}
   We note that if $|\beta|\leq |\alpha|-1$, then $|\beta|\leq \brak\alpha-1$. Moreover, if $\brak\alpha=w_{k}$ and $ \brak\beta+w_j=w_{l}\leq w_k$, then $|\beta|\leq \brak \beta=w_l-w_j\leq w_k-1=\brak\alpha -1$. Therefore, in order to complete the proof we only have to prove that each partial derivative 
    $\partial_{\tilde{x}}^{\alpha}\tilde{b}_{jk}(0)$ is a universal polynomial in the partial derivatives 
    $\partial^{\beta}_{\overline{x}}\overline{b}_{lp}(0)$ with $|\beta|\leq |\alpha|$.
    To see this we note that the coefficients $\tilde{b}_{jk}(\tilde{x})$ and $\overline{b}_{jk}(\overline{x})$ are 
    related by
    \begin{equation*}
        \tilde{b}_{jk}(\tilde{x})=\sum_{1\leq l\leq n}\overline{b}_{jl}\left(\psi_{a}^{-1}(x)\right) 
       ( \partial_{\overline{x}_{l}}\psi_{a})\left(\psi_{a}^{-1}(x)\right)_{k}.
    \end{equation*}We then deduce that each partial derivative  $\partial_{\tilde{x}}^{\alpha}\tilde{b}_{jk}(0)$ is a 
    universal polynomial in the following
    \begin{itemize}
        \item[-] The partial derivatives $\partial^{\beta}_{\overline{x}}b_{jl}(0)$ with $|\beta|\leq |\alpha|$.  
    
        \item[-] The partial derivatives $\partial^{\gamma}_{\overline{x}}\psi_{a}(0)_{k}$ with $|\gamma|\leq 
        |\alpha|+1$. 
    
        \item[-] The partial derivatives $\partial^{\gamma}_{\tilde{x}}\psi_{a}^{-1}(0)_{k}$ with $|\gamma|\leq 
        |\alpha|$. 
    \end{itemize}
   This further reduces the proof to showing that the partial derivative 
   $\partial^{\alpha}_{\overline{x}}\psi_{a}(0)_{k}$ and $\partial^{\alpha}_{\overline{x}}\psi_{a}^{-1}(0)_{k}$ are 
   universal polynomials in the partial derivatives $\partial^{\beta}_{\overline{x}}\overline{b}_{jl}(0)$ with 
   $|\beta|\leq |\alpha|-1$. 
   
   Having said this, the partial derivatives $\partial^{\alpha}_{\overline{x}}\psi_{a}(0)_{k}$ can be computed 
   explicitly. Indeed, we have 
   \begin{equation}\label{eq-poly-1}
       \psi_{a}(\overline{x})_{k}=\overline{x}_{k}+\sum_{\substack{\brak\alpha<w_{k}\\|\alpha|\geq 
       2}}a_{k\alpha}\overline{x}^{\alpha}, \qquad k=1,\ldots,n.
   \end{equation}Thus $\psi_{a}'(0)=\op{id}$ and, when $|\alpha|\geq 2$, we have
   \begin{equation*}
       \partial^{\alpha}_{\overline{x}}\psi_{a}(0)_{k}=\left\{
       \begin{array}{ll}
           \alpha!a_{k\alpha} & \text{if $\brak\alpha<w_{k}$}\\
           0 & \text{otherwise}.
        \end{array}\right.
   \end{equation*}
   As $a_{k\alpha}$ is a universal polynomial in the partial derivatives $\partial^{\beta}_{\overline{x}}\overline{b}_{jl}(0)$ with 
   $|\beta|\leq |\alpha|-1$, we then deduce that we have the same property with $\partial^{\alpha}_{\overline{x}}\psi_{a}(0)_{k}$. 
   In addition, by using~(\ref{eq-poly-1}) and arguing by induction as in the proof of Theorem~\ref{thm:Carnot-coord.unicity-normal2} it can be shown that the components 
   $\psi_{a}^{-1}(\tilde{x})_k$, $k=1,\ldots, n$,  are of the form,
   \begin{equation*}
      \psi_{a}^{-1}(\tilde{x})_{j}= \tilde{x}_{j}+\sum_{\substack{\brak\alpha<w_{k}\\|\alpha|\geq 
      2}}\tilde{a}_{k\alpha} \tilde{x}^{\alpha},
   \end{equation*}where $\tilde{a}_{k\alpha}$ is a universal polynomial in the coefficients $a_{l\beta}$ with 
   $w_{l}\leq w_{k}$ and $|\beta|\leq |\alpha|$. Therefore, it is a universal polynomial in the partial derivatives $\partial^{\beta}_{\overline{x}}\overline{b}_{jl}(0)$ with 
   $|\beta|\leq |\alpha|-1$. By arguing as above we then can show that each partial derivative 
   $\partial^{\alpha}_{\tilde{x}}\psi_{a}^{-1}(0)_{k}$ is a universal polynomial in the partial derivatives $\partial^{\beta}_{\overline{x}}\overline{b}_{jl}(0)$ with 
   $|\beta|\leq |\alpha|-1$. This completes the proof.  
\end{proof}

The following  is  a version of Proposition~\ref{prop:Carnot-prop.polynomial-X} for the inverse map 
$(\varepsilon_{a}^{X})^{-1}(x)$. 

\begin{proposition}\label{polynomial-XI}
    The inverse map $(\varepsilon_{a}^{X})^{-1}(x)$ is of the form,
    \begin{gather}
    (\varepsilon_{a}^{X})^{-1}(x)= a+ B_{X}(a)^{t}\check{\varepsilon}_{a}(x),  \nonumber \\
       \check{\varepsilon}_{a}(x)_{k}=x_{k}+\sum_{\substack{\brak\alpha\leq w_{k}\\|\alpha|\geq 2}} 
       \check{c}_{k\alpha}\left(B_{X}(a)\right) x^{\alpha}, \quad k=1,\ldots,n, 
       \label{eq:Carnot-prop.epsXI}
    \end{gather}where $ \check{c}_{k\alpha}(B_{X}(a))$ is a universal polynomial in the entries 
    of the matrices $(B_{X}(a)^{t})^{-1}$ and $\partial^{\beta}_{x}B_{X}(a)$ with $|\beta|\leq \brak\alpha-1$. 
\end{proposition}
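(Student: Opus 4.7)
My plan is to derive~(\ref{eq:Carnot-prop.epsXI}) by inverting the factorization of the $\varepsilon$-Carnot coordinate map introduced in Remark~\ref{rem-phi-hat}, and then to track the polynomial dependence of the inverse coefficients by exploiting the $w$-homogeneity of $\exp_{X^{(a)}}$.

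First, I would recall from Remark~\ref{rem-phi-hat} the factorizations $\varepsilon_a^X=\hat{\varepsilon}_a\circ T_a$ and $\hat{\varepsilon}_a = \exp_{X^{(a)}}^{-1}\circ\hat{\psi}_a$, where $T_a(x)=(B_X(a)^t)^{-1}(x-a)$. Inverting yields $(\varepsilon_a^X)^{-1}=T_a^{-1}\circ \hat{\varepsilon}_a^{-1}$, and since $T_a^{-1}(y)=a+B_X(a)^ty$, I obtain $(\varepsilon_a^X)^{-1}(x)=a+B_X(a)^t \hat{\varepsilon}_a^{-1}(x)$, so that $\check{\varepsilon}_a=\hat{\varepsilon}_a^{-1}=\hat{\psi}_a^{-1}\circ\exp_{X^{(a)}}$. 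To establish the polynomial form of $\check\varepsilon_a$ asserted in~(\ref{eq:Carnot-prop.epsXI}), I would invoke the claim proved inside the proof of Theorem~\ref{thm:Carnot-coord.unicity-normal2}, which shows that the class $\cE$ of polynomial diffeomorphisms of $\R^n$ of the form~(\ref{eq-phi-hat}) is closed under inversion; since $\hat{\varepsilon}_a$ lies in $\cE$ by Remark~\ref{rem-phi-hat}, so does $\check\varepsilon_a$.

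For the polynomial description of the coefficients $\check c_{k\alpha}(B_X(a))$, I would work in the intermediate linearly adapted coordinates $\overline{x}=T_a(x)$. Lemma~\ref{lem:nilpotent.flowX} and formula~(\ref{eq:Carnot-coord.xk(1)}) show that each component $\exp_{X^{(a)}}(\hat{x})_l$ is a $w$-homogeneous polynomial of degree $w_l$ in $\hat{x}$ whose coefficients are universal polynomials in the partial derivatives $\partial^\nu_{\overline{x}}\overline{b}_{jp}(0)$ with $w_j+\brak{\nu}=w_p\leq w_l$, hence with $|\nu|\leq w_l-1$. The inductive step at the end of the proof of Proposition~\ref{prop:Carnot-prop.polynomial-X} additionally gives $\hat{\psi}_a^{-1}(\tilde{x})_k=\tilde{x}_k+\sum_{\brak{\beta}<w_k,|\beta|\geq 2}\tilde{a}_{k\beta}\tilde{x}^{\beta}$, with each $\tilde{a}_{k\beta}$ a universal polynomial in the $\partial^\nu_{\overline{x}}\overline{b}_{jp}(0)$ with $|\nu|\leq\brak{\beta}-1$. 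Substituting $\exp_{X^{(a)}}(\hat{x})$ for $\tilde{x}$ and collecting monomials $\hat{x}^\alpha$ then expresses each $\check{c}_{k\alpha}$ as a universal polynomial in these ingredients, and the chain-rule relation~(\ref{eq-poly-2}) converts the derivatives $\partial^\nu_{\overline{x}}\overline{b}_{jp}(0)$ into universal polynomials in the entries of $(B_X(a)^t)^{-1}$ and $\partial^\nu_x B_X(a)$ of the same order $|\nu|$.

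The main obstacle is extracting the \emph{tight} bound $|\beta|\leq\brak{\alpha}-1$ rather than the crude bound $|\beta|\leq w_k-1$. To achieve it, I would exploit the $w$-homogeneity of $\exp_{X^{(a)}}$: every monomial $\hat{x}^\alpha$ appearing in $\check\varepsilon_a(\hat{x})_k$ must arise either from $\exp_{X^{(a)}}(\hat{x})_k$ itself (forcing $\brak{\alpha}=w_k$) or from a term $\tilde{a}_{k\beta}\prod_l \exp_{X^{(a)}}(\hat{x})_l^{\beta_l}$ with $\brak{\beta}=\brak{\alpha}$. In the latter situation every factor with $\beta_l\neq 0$ satisfies $w_l\leq\brak{\beta}=\brak{\alpha}$, so its contributing coefficients involve derivatives of order at most $w_l-1\leq\brak{\alpha}-1$, while the outer coefficient $\tilde{a}_{k\beta}$ contributes derivatives of order at most $\brak{\beta}-1=\brak{\alpha}-1$; the former situation yields the same bound $\brak{\alpha}-1=w_k-1$. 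Combining these uniform bounds with~(\ref{eq-poly-2}) produces the required universal polynomial representation of $\check{c}_{k\alpha}(B_X(a))$ and completes the argument.
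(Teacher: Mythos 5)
Your proposal is correct, and its opening step is the paper's: both factor $(\varepsilon_a^X)^{-1}=T_a^{-1}\circ\hat{\varepsilon}_a^{-1}=a+B_X(a)^t\check{\varepsilon}_a$ with $\check{\varepsilon}_a=\hat{\varepsilon}_a^{-1}$. Where you diverge is in how the inverse $\hat{\varepsilon}_a^{-1}$ is analyzed. The paper quotes the polynomial form of $\hat{\varepsilon}_a$ from Proposition~\ref{prop:Carnot-prop.polynomial-X} as a black box, substitutes $\check{\varepsilon}_a(x)$ for $x$, solves the resulting triangular relations recursively, and inducts on $w_k$. You instead invert through the explicit factorization $\check{\varepsilon}_a=\hat{\psi}_a^{-1}\circ\exp_{X^{(a)}}$ and do the weight bookkeeping directly, exploiting that each component $\exp_{X^{(a)}}(x)_l$ is exactly $w$-homogeneous of degree $w_l$, so any monomial $x^\alpha$ coming from a term $\tilde{a}_{k\beta}\prod_l\exp_{X^{(a)}}(x)_l^{\beta_l}$ has $\brak\alpha=\brak\beta$; this makes the tight bound $|\beta|\leq\brak\alpha-1$ quite transparent, arguably more so than the paper's recursion, in which a coefficient $c_{k\gamma}$ can generate monomials of weight strictly less than $\brak\gamma$ and the sharp bound needs finer accounting than the crude $|\beta|\leq w_k-1$. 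The price of your route is that you must reopen the proof of Proposition~\ref{prop:Carnot-prop.polynomial-X} rather than cite its statement; in particular, the coefficients furnished by Lemma~\ref{lem:nilpotent.flowX} are universal polynomials in the derivatives $\partial^{\nu}_{\tilde{x}}\tilde{b}_{jp}(0)$ computed in the $\psi$-privileged coordinates, not directly in $\partial^{\nu}_{\overline{x}}\overline{b}_{jp}(0)$, so before applying~(\ref{eq-poly-2}) you also need the conversion step from that proof (which preserves the order of differentiation); since you already borrow the form of $\hat{\psi}_a^{-1}$ and~(\ref{eq-poly-2}) from the same proof, this is a small repair rather than a gap. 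Finally, your appeal to the class~$\cE$ claim of Theorem~\ref{thm:Carnot-coord.unicity-normal2} for the shape of $\check{\varepsilon}_a$ is harmless but redundant, as the explicit composition already exhibits that shape.
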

\begin{proof}
    We know by Proposition~\ref{prop:Carnot-prop.polynomial-X} that $\varepsilon_{a}^{X}(x)=\hat{\varepsilon}_{a}\circ T_{a}(x)$, where
    $T_{a}(x)=(B_{X}(a)^{t})^{-1}(x-a)$ and $\hat{\varepsilon}_{a}(x)$ is of the form, 
        \begin{equation}\label{eq-hat-1}
        \hat{\varepsilon}_{a}(x)_{k}=x_{k}+\sum_{\substack{\brak\alpha\leq w_{k}\\|\alpha|\geq 2}} 
        c_{k\alpha}\left(B_{X}(a)\right) x^{\alpha}, \qquad k=1,\ldots,n,
    \end{equation}where $c_{k\alpha}(B_{X}(a))$ is a universal polynomial in the entries 
    of the matrices $(B_{X}(a)^{t})^{-1}$ and $\partial^{\beta}_{x}B_{X}(a)$ with $|\beta|\leq |\alpha|-1$. Set 
    $\check{\varepsilon}_{a}=(\hat{\varepsilon}_{a})^{-1}$. Then we have
    \begin{equation*}
         (\varepsilon_{a}^{X})^{-1}(x)=T_{a}^{-1}\circ\check{\varepsilon}_{a}(x)=a+B_{X}(a)^{t}\check{\varepsilon}_{a}(x).
    \end{equation*}
    
    Moreover, in the same way as in the proof of Theorem~\ref{thm:Carnot-coord.unicity-normal2}, if in~(\ref{eq-hat-1}) 
    we substitute $\check{\varepsilon}_{a}(x)$ for $x$ and solve for $x_k$, then we get  
     \begin{equation*}
         \check{\varepsilon}_{a}(x)_k= x_k-  \sum_{\substack{\brak\alpha\leq w_{k}\\|\alpha|\geq 2}} 
        c_{k\alpha}\left(B_{X}(a)\right)  \prod_{w_l<w_k}\check{\varepsilon}_{a}(x)_l^{\alpha_l}, \qquad k=1,\ldots,n. 
     \end{equation*}
    When $w_k=1$ this gives $ \check{\varepsilon}_{a}(x)_k= x_k$. In general, an induction on $w_k$ shows that $ \check{\varepsilon}_{a}(x)_k$ is of the 
    form~(\ref{eq:Carnot-prop.epsXI}), where each coefficient $ \check{c}_{k\alpha}(B_{X}(a))$ is a universal polynomial in the entries 
    of $(B_{X}(a)^{t})^{-1}$ and $\partial^{\beta}_{x}B_{X}(a)$ with $|\beta|\leq \brak\alpha-1$. The result is 
    thus proved. 
\end{proof}

As immediate consequences of Proposition~\ref{prop:Carnot-prop.polynomial-X} and Proposition~\ref{polynomial-XI} we obtain the following smoothness result. 

\begin{corollary}\label{prop-carnot-sm}
    Given an $H$-frame $X=(X_{1},\ldots,X_{n})$ over a coordinate open $U\subset \R^{n}$, the maps $(y,x)\rightarrow 
    \varepsilon_{x}(y)\in \R^{n}$ and $(y,x)\rightarrow 
    \varepsilon_{x}^{-1}(y)$ are smooth maps from $U\times \R^{n}$ to $\R^{n}$. 
\end{corollary}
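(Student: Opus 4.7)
My plan is to simply read off the smoothness from the explicit polynomial formulas provided by Proposition~\ref{prop:Carnot-prop.polynomial-X} and Proposition~\ref{polynomial-XI}. The strategy is to isolate the ingredients that appear in these formulas, verify that each is smooth in the base point $a$, and then conclude by noting that a polynomial expression in smooth functions is smooth.

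More concretely, I would proceed as follows. First, since $(X_1,\ldots,X_n)$ is a smooth $H$-frame over $U$, the coefficient matrix $B_X(x)=(b_{jk}(x))$ and all its partial derivatives $\partial^\beta_x B_X(x)$ are smooth functions of $x\in U$. Moreover, as $(X_1,\ldots,X_n)$ is a frame, $B_X(x)$ is invertible on $U$, so $x\mapsto (B_X(x)^t)^{-1}$ is smooth on $U$ as well. Second, Proposition~\ref{prop:Carnot-prop.polynomial-X} tells us that for each $k$ and each multi-order $\alpha$ with $\brak\alpha\leq w_k$ and $|\alpha|\geq 2$, the coefficient $c_{k\alpha}(B_X(a))$ is a universal polynomial in the entries of $(B_X(a)^t)^{-1}$ and the partial derivatives $\partial^\beta_x B_X(a)$ with $|\beta|\leq\brak\alpha-1$; hence $a\mapsto c_{k\alpha}(B_X(a))$ is smooth on $U$. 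Likewise, $T_a(y)=(B_X(a)^t)^{-1}(y-a)$ is a smooth map of $(a,y)\in U\times\R^n$.

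Substituting into the formula
\begin{equation*}
\varepsilon_a^X(y)_k = T_a(y)_k + \sum_{\substack{\brak\alpha\leq w_k\\ |\alpha|\geq 2}} c_{k\alpha}(B_X(a))\, T_a(y)^\alpha, \qquad k=1,\ldots,n,
\end{equation*}
we see that each component of $\varepsilon_a^X(y)$ is a polynomial in the smooth functions $T_a(y)$ with coefficients that are smooth functions of $a$. Thus $(a,y)\mapsto \varepsilon_a^X(y)$ is smooth on $U\times \R^n$. The same argument applied to the formula in Proposition~\ref{polynomial-XI}, namely
\begin{equation*}
(\varepsilon_a^X)^{-1}(y) = a + B_X(a)^t \check\varepsilon_a(y), \qquad
\check\varepsilon_a(y)_k = y_k + \sum_{\substack{\brak\alpha\leq w_k\\ |\alpha|\geq 2}} \check c_{k\alpha}(B_X(a))\, y^\alpha,
\end{equation*}
shows that $(a,y)\mapsto (\varepsilon_a^X)^{-1}(y)$ is likewise smooth on $U\times\R^n$. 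There is no genuine obstacle here: the hard work has already been done in establishing the polynomial character of the two $\varepsilon$-maps and the universal polynomial nature of their coefficients, so the corollary is an essentially immediate consequence.
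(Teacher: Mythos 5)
Your proposal is correct and takes essentially the same route as the paper: the paper deduces the corollary directly from Proposition~\ref{prop:Carnot-prop.polynomial-X} and Proposition~\ref{polynomial-XI}, exactly as you do, by noting that the explicit formulas express $\varepsilon_a^X(y)$ and $(\varepsilon_a^X)^{-1}(y)$ as polynomials in $y-a$ (resp.\ $y$) whose coefficients are universal polynomials in the entries of $(B_X(a)^t)^{-1}$ and $\partial^\beta_x B_X(a)$, hence depend smoothly on the base point. Nothing further is needed.
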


Let $U$ be an open subset of $\R^{n}$ and denote by $\cH(U)$ the subspace of $\cX(U)^{n}$ consisting of frames 
$(X_{1},\ldots,X_{n})$ such that, for $i,j=1,\ldots,n$ with $w_{i}+w_{j}\leq r$, we have
\begin{equation*}
    [X_{i},X_{j}](x)\in \op{Span}\left\{X_{k}(x);\ w_{k}\leq w_{i}+w_{j}\right\} \qquad \text{for all $x\in U$}.
\end{equation*}
We note that if $X=(X_{1},\ldots,X_{n})$ is in 
$\cH(U)$, then this is an $H$-frame with respect to the filtration $H=(H_{1},\ldots,H_{r})$, where $H_{w}$ is the 
sub-bundle generated by the vector fields $X_{j}$, $w_{j}\leq w$. 

We equip $\cH(U)$ with the topology induced by that of $\cX(U)^{n}$. We observe that 
in~(\ref{eq:Carnot-prop.epsX}) and~(\ref{eq:Carnot-prop.epsXI}) the 
coefficients $c_{k\alpha}\left(B_{X}(a)\right)$ and $\check{c}_{k\alpha}\left(B_{X}(a)\right)$ depend continuously on 
$X=(X_{1},\ldots,X_{n})$ with respect to the aforementioned topology on $\cH(U)$. Therefore, we arrive 
at the following statement. 

\begin{corollary}\label{prop:Carnot-prop.eps-continuity}
    The maps $X\rightarrow \varepsilon^{X}_{y}(x)$ and $X\rightarrow \left(\varepsilon^{X}_{y}\right)^{-1}(x)$ are continuous maps from 
    $\cH(U)$ to $C^{\infty}(U\times \R^{n}, \R^{n})$. 
\end{corollary}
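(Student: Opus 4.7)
The plan is to read off the statement directly from the explicit formulas provided by Proposition~\ref{prop:Carnot-prop.polynomial-X} and Proposition~\ref{polynomial-XI}. For each $y\in U$ and each $H$-frame $X=(X_1,\ldots,X_n)\in\cH(U)$, those results express the maps $x\mapsto \varepsilon_y^X(x)$ and $x\mapsto (\varepsilon_y^X)^{-1}(x)$ as polynomial maps in $x$ of degree bounded by $r$, with coefficients that are universal polynomials in the entries of $(B_X(y)^{t})^{-1}$ and of the partial derivatives $\partial^{\beta}B_X(y)$ with $|\beta|\leq r-1$. Thus, modulo handling the parameter dependence, the statement is essentially tautological from those formulas.

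First I would observe that the map $X\mapsto B_X$ is continuous (indeed, linear) from $\cH(U)$, equipped with the induced topology from $\cX(U)^{n}$, to $C^{\infty}(U, M_n(\R))$. Since, for $X\in \cH(U)$, the matrix-valued map $y\mapsto B_X(y)$ is everywhere invertible on $U$ and inversion is smooth on $\op{GL}_n(\R)$, the map $X\mapsto (B_X^{t})^{-1}$ is continuous from $\cH(U)$ to $C^{\infty}(U, M_n(\R))$. The partial-derivative operators $\partial^{\beta}$ are continuous on $C^{\infty}(U,\R)$, and composition with a universal polynomial preserves continuity. It follows that, for every multi-order $(k,\alpha)$, the coefficients $c_{k\alpha}(B_X(y))$ and $\check{c}_{k\alpha}(B_X(y))$ in~(\ref{eq:Carnot-prop.epsX}) and~(\ref{eq:Carnot-prop.epsXI}) depend continuously on $X\in\cH(U)$ with values in $C^{\infty}(U,\R)$.

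Next I would plug this information into the formulas~(\ref{eq:Carnot-prop.epsX}) and~(\ref{eq:Carnot-prop.epsXI}). Each component of $\varepsilon_y^X(x)$ (respectively, $(\varepsilon_y^X)^{-1}(x)$) is then a finite sum of terms of the shape $\gamma(X,y)\cdot P(x,y)$, where $\gamma$ depends continuously on $X$ and smoothly on $y$, and $P$ is a fixed polynomial in $(x,y)$ (arising from the monomials $T_a(x)^{\alpha}$ and from the affine factor $a+B_X(a)^{t}(\cdot)$). Since such products and finite sums are jointly continuous maps into $C^{\infty}(U\times \R^{n},\R)$, we conclude that both maps $X\mapsto \varepsilon^{X}_{y}(x)$ and $X\mapsto (\varepsilon^{X}_{y})^{-1}(x)$ are continuous from $\cH(U)$ to $C^{\infty}(U\times \R^{n},\R^{n})$, as required.

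There is no real obstacle here: the entire argument is bookkeeping on top of Propositions~\ref{prop:Carnot-prop.polynomial-X} and~\ref{polynomial-XI}. The only mildly delicate point is the continuity of $X\mapsto (B_X^{t})^{-1}$ in the $C^{\infty}$ topology, which is handled once one notes that $\det B_X(y)$ is nowhere vanishing on $U$ (because $X$ is a frame) and that inversion is smooth on $\op{GL}_n(\R)$; the Leibniz rule then takes care of the higher partial derivatives uniformly on compact subsets of $U$.
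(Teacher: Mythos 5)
Your proposal is correct and follows essentially the same route as the paper: the authors likewise deduce the corollary directly from the explicit formulas~(\ref{eq:Carnot-prop.epsX}) and~(\ref{eq:Carnot-prop.epsXI}), observing that the coefficients $c_{k\alpha}(B_X(y))$ and $\check{c}_{k\alpha}(B_X(y))$ depend continuously on $X$ in the topology induced from $\cX(U)^n$. Your write-up merely spells out the routine details (continuity of $X\mapsto B_X$, of matrix inversion on frames, and of differentiation), which the paper leaves implicit.
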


Finally, the following shows that $\varepsilon$-Carnot coordinates behaves nicely under anisotropic rescalings of $H$-frames. 

\begin{proposition}\label{prop:data-dependence.rescaling-frame}
     Let  $(x_{1},\ldots,x_{n})$ be local coordinates near $a$ with range $U\subset \R^{n}$. Let $t\in \R^*$ and $y\in U$ be such that 
     $t\cdot y\in U$. In addition, for $j=1,\ldots,n$, set $\hat{X}_{j}=t^{w_{j}}\delta_{t}^{*}X_{j}$. Then
     \begin{equation*}
       \varepsilon_{y}^{\hat{X}}(x)=  t^{-1}\cdot  \varepsilon_{t\cdot y}^{X}(t\cdot x) \qquad \forall x\in 
        \R^{n}.
     \end{equation*}
\end{proposition}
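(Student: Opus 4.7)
Set $\Phi(x) := t^{-1}\cdot \varepsilon_{t\cdot y}^{X}(t\cdot x) = \delta_{t^{-1}}\circ \varepsilon_{t\cdot y}^{X}\circ \delta_{t}(x)$. The strategy is to verify that $\Phi$ satisfies the two conditions that uniquely characterize $\varepsilon_{y}^{\hat{X}}$ via Theorem~\ref{thm:Carnot-coord.unicity-normal2}: (a) $\Phi$ is of the form $\hat{\varepsilon}'\circ T'$ with $T'$ affine, $T'(y)=0$ and $\hat{\varepsilon}'$ a polynomial diffeomorphism of the form~(\ref{eq-phi-hat}); and (b) the change of variables $x\to \Phi(x)$ produces Carnot coordinates at $y$ adapted to the $H$-frame $\hat{X}=(\hat{X}_{1},\ldots,\hat{X}_{n})$. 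The uniqueness part of Theorem~\ref{thm:Carnot-coord.unicity-normal2} then forces $\Phi=\varepsilon_{y}^{\hat{X}}$.

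For (a), apply Theorem~\ref{thm:Carnot-coord.unicity-normal2} to write $\varepsilon_{t\cdot y}^{X}=\hat{\varepsilon}\circ T$ with $T$ affine, $T(t\cdot y)=0$, and $\hat{\varepsilon}$ of the form~(\ref{eq-phi-hat}), so that $\Phi=(\delta_{t^{-1}}\circ \hat{\varepsilon}\circ \delta_{t})\circ(\delta_{t^{-1}}\circ T\circ \delta_{t})$. Since $\delta_{t}$ and $\delta_{t^{-1}}$ are linear, $T':=\delta_{t^{-1}}\circ T\circ \delta_{t}$ is affine and $T'(y)=\delta_{t^{-1}}(T(t\cdot y))=0$. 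If $\hat{\varepsilon}_{k}(x)=x_{k}+\sum_{\brak\alpha\leq w_{k},\,|\alpha|\geq 2} d_{k\alpha}x^{\alpha}$, the homogeneity of the dilations gives $\hat{\varepsilon}'_{k}(u):=(\delta_{t^{-1}}\circ \hat{\varepsilon}\circ \delta_{t})_{k}(u)=u_{k}+\sum_{\brak\alpha\leq w_{k},\,|\alpha|\geq 2} d_{k\alpha}\,t^{\brak\alpha - w_{k}}u^{\alpha}$, which is again of the form~(\ref{eq-phi-hat}).

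For (b), invoke Proposition~\ref{prop:Carnot-criterion}: it suffices to verify linear adaptation of $\Phi$ to $\hat{X}$ at $y$ and the asymptotic $s^{w_{j}}\delta_{s}^{*}(\Phi_{*}\hat{X}_{j})\to \hat{X}_{j}^{y}$ in $\cX$ as $s\to 0$, where $\hat{X}_{j}^{y}$ denotes the left-invariant vector field at $y$ associated with the $H$-frame $\hat{X}$. Using $(\delta_{t})_{*}\hat{X}_{j}=(\delta_{t})_{*}(t^{w_{j}}\delta_{t}^{*}X_{j})=t^{w_{j}}X_{j}$, one obtains $\Phi_{*}\hat{X}_{j}=t^{w_{j}}(\delta_{t^{-1}})_{*}V_{j}$, with $V_{j}:=(\varepsilon_{t\cdot y}^{X})_{*}X_{j}$. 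Since $V_{j}(0)=\partial_{j}$ and $d\delta_{t^{-1}}|_{0}\partial_{j}=t^{-w_{j}}\partial_{j}$, we get $(\Phi_{*}\hat{X}_{j})(0)=\partial_{j}$, proving linear adaptation. For the asymptotic, the identity $\delta_{s}^{*}\circ (\delta_{t^{-1}})_{*}=\delta_{st}^{*}$ (immediate from $d\delta_{s}|_\cdot^{-1} d\delta_{t^{-1}}|_\cdot = d\delta_{st}|_\cdot^{-1}$ and $\delta_{t}\circ \delta_{s}=\delta_{st}$) yields $s^{w_{j}}\delta_{s}^{*}(\Phi_{*}\hat{X}_{j}) = (st)^{w_{j}}\delta_{st}^{*}V_{j}$, which tends to $X_{j}^{t\cdot y}$ as $s\to 0$ by Proposition~\ref{prop:Carnot-criterion} applied to the Carnot chart $\varepsilon_{t\cdot y}^{X}$. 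Finally, $\hat{X}_{j}^{y}=X_{j}^{t\cdot y}$: from $[\hat{X}_{i},\hat{X}_{j}]=t^{w_{i}+w_{j}}\delta_{t}^{*}[X_{i},X_{j}]$ one reads off $\hat{L}_{ij}^{k}(y)=L_{ij}^{k}(t\cdot y)$ whenever $w_{k}=w_{i}+w_{j}$, and by formula~(\ref{eq:GM.Xj}) the left-invariant vector fields on the tangent group depend only on these top-weight structure constants.

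\textbf{Main obstacle.} The delicate technical steps are the dilation/pushforward identity $\delta_{s}^{*}\circ (\delta_{t^{-1}})_{*}=\delta_{st}^{*}$, which is what transfers the model-vector-field asymptotics of $\varepsilon_{t\cdot y}^{X}$ to $\Phi$, together with the identification $\hat{X}_{j}^{y}=X_{j}^{t\cdot y}$ via matching the top-weight structure constants of the rescaled $H$-frame. Once both are in hand, the rest is a routine application of the uniqueness in Theorem~\ref{thm:Carnot-coord.unicity-normal2}.
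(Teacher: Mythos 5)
Your proposal is correct and follows essentially the same route as the paper: it conjugates $\varepsilon_{t\cdot y}^{X}$ by the dilations, checks the polynomial-affine form, linear adaptation, and the model-vector-field asymptotics, identifies $\hat{X}_j^y=X_j^{t\cdot y}$ via the top-weight structure constants, and concludes by the uniqueness in Theorem~\ref{thm:Carnot-coord.unicity-normal2}. The only differences (quoting Proposition~\ref{prop:Carnot-criterion} instead of Proposition~\ref{prop-pri-equiv} for the limit, and writing out the rescaled coefficients $t^{\brak\alpha-w_k}$) are cosmetic.
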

\begin{proof}
 Set $\phi=\delta_t^{-1}\circ \varepsilon_{t\cdot y}^{X}\circ \delta_t$ and $V=\phi(U)$. The proof amounts to show that $\phi=  \varepsilon_{y}^{\hat{X}}$. By Proposition~\ref{prop:Carnot-criterion} and Theorem~\ref{thm:Carnot-coord.unicity-normal2} to achieve this we only have to check the following three properties: 
 \begin{enumerate}
\item[(i)] We can put $\phi$ in the form $\phi=\hat{\phi}\circ T$, where $T$ an affine map such that $T(y)=0$ and $\hat\phi(x)$ is a polynomial map of the form~(\ref{eq-phi-hat}). 

\item[(ii)] The coordinates provided by $\phi$ are linearly adapted  to $(\hat{X}_1, \ldots, \hat{X}_n)$ at $y$. 

\item[(iii)] Given any $s\in \R^*$, for $j=1,\ldots, n$ and as $t\rightarrow 0$, we have
\begin{equation*}
 s^{w_j}\delta_s^*\left( \phi_*X_j\right) = \hat{X}_j^y +\op{O}(t) \qquad \text{in $\cX(V)$}. 
\end{equation*}
\end{enumerate}

We know that $\varepsilon^X_{t\cdot y}=\varepsilon\circ T$, where $T$ is an affine map such that $T(t\cdot y)=0$ and $\hat\varepsilon$ is a polynomial map of the form~(\ref{eq-phi-hat}). Thus, we have $\phi= \delta_t^{-1}\circ( \varepsilon\circ T)\circ \delta_t=  (\delta_t^{-1}\circ \varepsilon\circ \delta_t)\circ (\delta_t^{-1}\circ T\circ \delta_t)$. 
As the dilations $\delta_t$ and $\delta_t^{-1}$ are diagonal linear maps, we see that $\delta_t^{-1}\circ T\circ \delta_t$ is an affine map such that $\delta_t^{-1}\circ T\circ \delta_t(y)=\delta_t^{-1}\circ T(t\cdot y)=0$ and $\delta_t^{-1}\circ \varepsilon\circ \delta_t$ is a polynomial map of the form~(\ref{eq-phi-hat}). Therefore, the property (i) is satisfied. 

To check (ii) and (iii) we observe that, for $j=1,\ldots, n$, we have 
\begin{equation}
 \phi_*\hat{X}_j= \left( \delta_t^{-1}\circ \varepsilon_{t\cdot y}^{X}\circ \delta_t\right)_*\left( t^{w_{j}}\delta_{t}^{*}X_{j}\right) = t^{w_j} \delta_t^* \left( (\varepsilon_{t\cdot y}^{X})_*X_j\right). 
 \label{eq:data-dependence.phi8hatX}
\end{equation}
The coordinates provided by $ \varepsilon_{t\cdot y}^{X}$ are Carnot coordinates at $t\cdot y$ that are adapted to $(X_1,\ldots, X_n)$. In particular, they are linearly adapted coordinates, and so $(\varepsilon_{t\cdot y}^{X})_*X_j(0)=\partial_j$. Combining this with~(\ref{eq:anisotropic-dtX}) and~(\ref{eq:data-dependence.phi8hatX}) 
we obtain
\begin{equation*}
 \phi_*\hat{X}_j(0)= t^{w_j} \delta_t^* \left( (\varepsilon_{t\cdot y}^{X})_*X_j\right)(0) = t^{w_j} t^{-w_j} \partial_j= \partial_j. 
\end{equation*}
Thus, the coordinates provided by $\phi$ are linearly adapted  to $(\hat{X}_1, \ldots, \hat{X}_n)$ at $y$, i.e., we have (ii). 

To complete the proof it remains to check that (iii) is satisfied. Using~(\ref{eq:data-dependence.phi8hatX}) we see that, for any $s\in \R^*$ and $j=1,\ldots, n$, we have 
\begin{equation*}
 s^{w_j} \delta_s^* \left( \phi_*X_j\right)=  s^{w_j}t^{w_j} \delta_s^*  \delta_t^* \left( (\varepsilon_{t\cdot y}^{X})_*X_j\right)= 
  (st)^{w_j}  \delta_{st}^* \left( (\varepsilon_{t\cdot y}^{X})_*X_j\right). 
\end{equation*}
Recall that $\varepsilon_{t\cdot y}^{X}$ provides us with Carnot coordinates at $t\cdot y$ that are adapted to $(X_1,\ldots, X_n)$. Therefore, by using Proposition~\ref{prop-pri-equiv} we see that, as $s\rightarrow 0$ and in $\cX(V)$, we have 
\begin{equation*}
  s^{w_j} \delta_s^* \left( \phi_*X_j\right)=  (st)^{w_j}  \delta_{st}^* \left( (\varepsilon_{t\cdot y}^{X})_*X_j\right) = X_j^{(t\cdot y)} +\op{O}(t). 
\end{equation*}
Therefore, in order to check that we have (iii) we only need to show that $ X_j^{(t\cdot y)}=\hat{X}_j^{y}$ for $j=1,\ldots, n$. 

In the notation of~(\ref{eq:Carnot-mfld.brackets-H-frame}) there are smooth functions $L^k_{ij}(x)$ near $t\cdot y$ and smooth functions $\hat{L}^k_{ij}(x)$ near $y$ such that
\begin{equation*}
 [X_j,X_j]=\sum_{w_k\leq w_i+w_j} L_{ij}^k(x) X_k \quad \text{and} \quad  [\hat{X}_j,\hat{X}_j]=\sum_{w_k\leq w_i+w_j} \hat{L}_{ij}^k(x) \hat{X}_k. 
\end{equation*}
As $ [\hat{X}_j,\hat{X}_j]= t^{w_i+w_j} [\delta_{t}^{*}X_{i}, \delta_{t}^{*}X_{j}]=  t^{w_i+w_j}\delta_t^*([X_i,X_j])$, we get
\begin{equation*}
 [\hat{X}_j,\hat{X}_j]=  t^{w_i+w_j} \!\!\! \sum_{w_k\leq w_i+w_j} \delta_t^*\left( L_{ij}^k X_k  \right) = \!\!\sum_{w_k\leq w_i+w_j} t^{w_i+w_j-w_k} L_{ij}^k(t\cdot x) 
  \hat{X_k}. 
\end{equation*}
 Thus, near $x=y$, we have 
 \begin{equation}
 \hat{L}_{ij}^k(x) =  t^{w_i+w_j-w_k} L_{ij}^k(t\cdot x), \qquad w_k\leq w_i+w_j. 
 \label{eq:data-dependence.hatL-L}
\end{equation}
Recall that, for $j=1,\ldots, n$, the vector field $X_j^{(t\cdot y)}$ (resp., $\hat{X}_j^{y}$) is the vector field~(\ref{eq:GM.Xj}) associated with the structure constants 
$L_{pq}^l(t\cdot y)$ (resp., $\hat{L}_{pq}^l(y)$) with $w_p+w_q=w_l$. As~(\ref{eq:data-dependence.hatL-L}) implies that $L_{pq}^l(t\cdot y)=\hat{L}_{pq}^l(y)$ when $w_p+w_q=w_l$, we deduce that $ X_j^{(t\cdot y)}=\hat{X}_j^{y}$ for every $j=1,\ldots, n$. This shows that the property (iii) is satisfied. The proof is complete. 
\end{proof}

\subsection{$\varepsilon$-Carnot coordinates and group law} 
As an application of Corollary~\ref{prop:Carnot-prop.eps-continuity}, we shall now explain how well the  
$\varepsilon$-Carnot coordinate map $(x,y)\rightarrow \varepsilon_y^X(x)$ is approximated by the product of $GM(a)$ in Carnot coordinates. In particular, this will provide us with an asymptotic version of Proposition~\ref{prop-static} for general Carnot manifolds. 

Let $(X_1,\ldots, X_n)$ be an $H$-frame near $a$. As mentioned in Remark~\ref{rmk:Tangent-group.frame-fgM}, this frame gives rise to a graded basis $(\xi_1(a), \ldots, \xi_n(a))$ of $\fg M(a)$, where $\xi_j(a)$ is the class of $X_j(a)$ in $\fg_{w_j} M(a)$. By Proposition~\ref{prop:Carnot-coord.GM(a)-G(a)} this graded basis defines a global system of coordinates on $\fg M(a)=GM(a)$ in which the tangent group $GM(a)$ is identified with the graded nilpotent  group $G(a)$. This group is $\R^n$ equipped with the Dynkin product~(\ref{eq:GM.group-law}) associated with the structure constants $L_{ij}^k(a)$, $w_i+w_j=w_k$, in~(\ref{eq:Carnot-mfld.brackets-H-frame}). 

In addition, we endow $\R^n\times \R^n$ with the dilations, 
\begin{equation}
 t \cdot (x,y)=(t\cdot x, t\cdot y), \qquad x,y\in \R^n,\ t\in \R. 
 \label{eq:smoothness.dilations-2n}
\end{equation}
We also assume that we are given a pseudo-norm $\| \cdot \|:\R^n\times \R^n \rightarrow [0,\infty)$ that satisfies~(\ref{eq:anisotropic.homogeneity-pseudo-norm}) with respect to these dilations. For instance, we may take 
 \begin{equation*}
\|(x,y)\|= |x_1|^{\frac{1}{w_1}} + |y_1|^{\frac{1}{w_1}} + \cdots + |x_n|^{\frac{1}{w_n}}+ |y_n|^{\frac{1}{w_n}},
 \qquad x,y\in \R^n.
\end{equation*}
  The pseudo-norm  $\| \cdot \|$ on $\R^n\times \R^n$ enables us to speak about $\Ow(\|(x,y)\|^{w+m})$-maps in the same way as in Definition~\ref{def:anisotropic.Thetaw}. In particular, if $\Theta(x,y)=\left(\Theta_{1}(x,y),\ldots, \Theta_{n}(x,y)\right)$ is a smooth map from 
    $U$ to $\R^{n}$, where $U$ is an open neighborhood 
    of $(0,0)\in \R^{n}\times \R^{n}$, then $\Theta(x,y)=\Ow( \|(x,y)\|^{w+m})$ near $(x,y)=(0,0)$ if and only if 
    $ \Theta_{j}(x,y)=\op{O}(\|(x,y)\|^{w_{j}+m})$ for $j=1,\ldots, n$. In addition,  Lemma~\ref{lem-eq-we} provides us with
     the following characterization of $\Ow( \|(x,y)\|^{w+m})$-maps. 

\begin{lemma}\label{lem-smooth-2}
    Let $\Theta(x,y)=(\Theta_{1}(x,y),\ldots,\Theta_{n}(x,y))$  be a smooth map from $W$ to $\R^{n}$, where $W$ is some open neighborhood 
    of $(0,0)\in \R^{n}\times \R^{n}$. 
 Then the following are equivalent:
 \begin{enumerate}
     \item[(i)]   $\Theta(x,y)=\Ow(\|(x,y)\|^{w+m})$ near $(x,y)=(0,0)$.
 
     \item[(ii)]  For all $x,y\in \R^{n}$, we have $t^{-1}\cdot \Theta(t\cdot x,t\cdot y)=\op{O}(t^{m})$ as $t\rightarrow 0$. 
 \end{enumerate}
\end{lemma}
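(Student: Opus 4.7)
The plan is to deduce this statement from Lemma~\ref{lem-eq-we} by identifying $\R^n\times \R^n$ with $\R^{2n}$ equipped with a suitable weight sequence on the source. Specifically, I would set $\tilde{w}:=(w_1,\ldots,w_n,w_1,\ldots,w_n)\in \N^{2n}$ and identify $\R^n\times \R^n$ with $\R^{2n}$ via the map $(x,y)\mapsto (x_1,\ldots,x_n,y_1,\ldots,y_n)$. Under this identification the product dilations~(\ref{eq:smoothness.dilations-2n}) become exactly the anisotropic dilations on $\R^{2n}$ associated with the weight sequence $\tilde{w}$ in the sense of~(\ref{eq:Nilpotent.dilations2}), and the pseudo-norm $\|\cdot\|$ on $\R^n\times \R^n$ becomes a pseudo-norm on $\R^{2n}$ in the sense of Definition~\ref{def:anisotropic-pseudo-norm} with respect to $\tilde{w}$.

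With these identifications in place, the map $\Theta$ becomes a smooth map from an open neighborhood of $0\in \R^{2n}$ to $\R^n$, while the target $\R^n$ retains its weight sequence $w=(w_1,\ldots,w_n)$ together with the dilations~(\ref{eq:Nilpotent.dilations2}). Unfolding Definition~\ref{def:anisotropic.Thetaw}, condition (i) of the lemma is precisely the condition that $\Theta:W\subset \R^{2n}\to \R^n$ is $\Ow(\|\cdot\|^{w+m})$ in the sense of that definition. Similarly, the rescaled expression $t^{-1}\cdot \Theta(t\cdot x,t\cdot y)$ in condition (ii) matches the rescaling $t^{-1}\cdot \Theta(t\cdot \tilde x)$ appearing in clause~(iii) of Lemma~\ref{lem-eq-we}, with $\tilde x=(x,y)\in \R^{2n}$, since the source dilations are given by $\tilde w$ and the target dilations by $w$.

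With the setup checked, it then suffices to invoke Lemma~\ref{lem-eq-we} applied to $\Theta:W\subset \R^{2n}\to \R^n$: the equivalence of its clauses~(i) and~(iii) is exactly the equivalence of clauses~(i) and~(ii) of Lemma~\ref{lem-smooth-2}. No new analytic input is required; the only point to verify is that Lemma~\ref{lem-eq-we}, although stated for a source $\R^n$ matching the dimension of the Carnot manifold, is proved in \cite{CP:Privileged} without any restriction tying the source dimension to $n$, so it applies verbatim to a source of dimension $2n$ equipped with the weight sequence $\tilde w$. The only genuine bookkeeping obstacle is ensuring that the $\R^{2n}$-pseudo-norm inherited from $\R^n\times\R^n$ fits Definition~\ref{def:anisotropic-pseudo-norm}, but this is immediate from~(\ref{eq:anisotropic.homogeneity-pseudo-norm}) for the product dilations, which was built into the hypotheses above Lemma~\ref{lem-smooth-2}.
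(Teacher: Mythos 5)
Your proposal is correct and matches the paper's own treatment: the paper states this lemma as an immediate consequence of Lemma~\ref{lem-eq-we} applied on $\R^{n}\times\R^{n}$ equipped with the product dilations~(\ref{eq:smoothness.dilations-2n}) and the associated pseudo-norm, which is exactly your reduction via the doubled weight sequence. Your extra remark that the proof of Lemma~\ref{lem-eq-we} does not tie the source dimension to $n$ is the right (and only) point needing verification, and it holds.
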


We are now in a position to derive the following approximation result.  

\begin{proposition}\label{prop-com}
    Let $(x_{1},\ldots,x_{n})$ be Carnot coordinates at $a$ that are adapted to the $H$-frame $(X_{1},\ldots,X_{n})$. Then, near 
    $(x,y)=(0,0)$, we have
    \begin{gather}
        \varepsilon_{y}^{X}(x)=(-y)\cdot x +\Ow\left(\|(x,y)\|^{w+1}\right),
        \label{eq:smoothness.vareps-yx1} \\ 
        \left(\varepsilon_{y}^{X}\right)^{-1}(x)=y\cdot x +\Ow\left(\|(x,y)\|^{w+1}\right), 
               \label{eq:smoothness.vareps-yx2}
    \end{gather}where $\cdot$ is the group law of $G(a)$ (i.e., the group law of $GM(a)$ under the identification described above). 
\end{proposition}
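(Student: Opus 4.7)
The plan is to reduce the statement to an asymptotic rate calculation via the characterization of $\Ow$-maps in Lemma~\ref{lem-smooth-2}, then to use the anisotropic rescaling identity of Proposition~\ref{prop:data-dependence.rescaling-frame} in order to identify the rescaled expression with the $\varepsilon$-map of a one-parameter family of $H$-frames that converges smoothly to the canonical frame $X^a=(X_1^a,\ldots,X_n^a)$ on $G(a)$. The limit value is then computed using Proposition~\ref{prop-static}.

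Concretely, I would set $\hat X_j(t)=t^{w_j}\delta_t^*X_j$ for $j=1,\ldots,n$ and apply Proposition~\ref{prop:data-dependence.rescaling-frame} to get
\[
t^{-1}\cdot \varepsilon_{t\cdot y}^X(t\cdot x)=\varepsilon_y^{\hat X(t)}(x).
\]
Since $\delta_t$ is an automorphism of $G(a)$ (a direct consequence of the homogeneity of the structure constants in~(\ref{eq:GM.group-law2})) and inversion in $G(a)$ is $z\mapsto -z$, one also has $t^{-1}\cdot\bigl[(-(t\cdot y))\cdot(t\cdot x)\bigr]=(-y)\cdot x$. Thus, Lemma~\ref{lem-smooth-2} reduces the first assertion to the pointwise rate
\[
\varepsilon_y^{\hat X(t)}(x)-(-y)\cdot x=\op{O}(t)\qquad \text{as $t\to 0$.}
\]

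The crux is that in Carnot (hence privileged) coordinates the rescaled frame $\hat X_j(t)=\sum_k t^{w_j-w_k}b_{jk}(t\cdot x)\partial_k$ extends smoothly across $t=0$ with $\hat X_j(0)=X_j^a$: when $w_k>w_j$, the apparent singularity $t^{w_j-w_k}$ is cancelled by the vanishing of $b_{jk}$ at $0$ to weight $\geq w_k-w_j$, which follows from the characterization of privileged coordinates in Proposition~\ref{prop-pri-equiv}. Consequently, $B_{\hat X(t)}(y)$, its inverse transpose, and all its $y$-derivatives depend smoothly on $t$ up to $t=0$. In view of the explicit polynomial formula of Proposition~\ref{prop:Carnot-prop.polynomial-X}, this implies that $\varepsilon_y^{\hat X(t)}(x)$ is smooth in $t$ at $t=0$, with value at $t=0$ equal to $\varepsilon_y^{X^a}(x)$. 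Applying Proposition~\ref{prop-static} to $G=G(a)$ (where the canonical coordinates~(\ref{eq:vareps-Carnot.canonical-coord-group}) associated with $X^a$ reduce to the identity of $\R^n$ by Lemma~\ref{lem:Carnot-coord.admissibility-Xa}) gives $\varepsilon_y^{X^a}(x)=(-y)\cdot x$. A first-order Taylor expansion in $t$ then yields the desired $\op{O}(t)$ rate, establishing~(\ref{eq:smoothness.vareps-yx1}).

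The second identity~(\ref{eq:smoothness.vareps-yx2}) follows by the same argument applied to inverses: inverting the rescaling identity gives $t^{-1}\cdot(\varepsilon_{t\cdot y}^X)^{-1}(t\cdot x)=(\varepsilon_y^{\hat X(t)})^{-1}(x)$, and Proposition~\ref{polynomial-XI} analogously guarantees smooth $t$-dependence; the limit $t\to 0$ is $(\varepsilon_y^{X^a})^{-1}(x)=y\cdot x$, since $x\mapsto(-y)\cdot x$ is inverted by $x\mapsto y\cdot x$ in $G(a)$. I expect the main technical obstacle to be precisely the smoothness of $\hat X_j(t)$ at $t=0$, which upgrades the mere continuity of $X\mapsto \varepsilon_y^X$ from Corollary~\ref{prop:Carnot-prop.eps-continuity} to the $C^1$-regularity in $t$ needed for the rate $\op{O}(t)$; once this Hadamard-type cancellation is pinned down, the rest is an orchestrated application of the tools developed earlier in the paper.
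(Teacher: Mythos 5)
Your proof is correct, and for the first asymptotics~(\ref{eq:smoothness.vareps-yx1}) it follows essentially the same route as the paper: rescale via Proposition~\ref{prop:data-dependence.rescaling-frame}, identify the $t\to 0$ limit frame with $(X_1^a,\ldots,X_n^a)$ using Proposition~\ref{prop:Carnot-criterion}, compute the limit value $\varepsilon_y^{X^a}(x)=(-y)\cdot x$ via Lemma~\ref{lem:Carnot-coord.admissibility-Xa} and Proposition~\ref{prop-static}, and conclude with Lemma~\ref{lem-smooth-2}. Two points where you genuinely diverge, both to your credit or at least neutrally: first, to get the rate $\op{O}(t)$ (rather than mere $o(1)$) the paper invokes the frame expansion $\hat X^t_j=X_j^a+\op{O}(t)$ together with the continuity statement of Corollary~\ref{prop:Carnot-prop.eps-continuity}, whereas you upgrade this to smooth $t$-dependence of $\varepsilon_y^{\hat X(t)}(x)$ at $t=0$ by combining the Hadamard-type cancellation $t^{w_j-w_k}b_{jk}(t\cdot x)$ (justified by Proposition~\ref{prop-pri-equiv} and the weight~$\geq w_k-w_j$ of $b_{jk}$) with the universal polynomial formula of Proposition~\ref{prop:Carnot-prop.polynomial-X}; this is a more explicit justification of the same rate, since the polynomial dependence on the frame data is ultimately what makes the paper's appeal to continuity deliver $\op{O}(t)$. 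Second, for~(\ref{eq:smoothness.vareps-yx2}) the paper does not rerun the scaling argument: it packages the first asymptotics into the map $\Phi(x,y)=(\varepsilon_y^X(x),y)$, observes that $\hat\Phi(x,y)=((-y)\cdot x,y)$ is a $w$-homogeneous polynomial diffeomorphism with inverse $(y\cdot x,y)$, and applies the inverse-stability result Proposition~\ref{lem:Carnot-coord.inverse-Ow}; you instead invert the rescaling identity and use Proposition~\ref{polynomial-XI} to get smooth $t$-dependence of $(\varepsilon_y^{\hat X(t)})^{-1}(x)$ with limit $(\varepsilon_y^{X^a})^{-1}(x)=y\cdot x$. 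Both arguments are sound: the paper's buys the second identity formally from the first with no new limit computation, while yours keeps a single uniform mechanism (rescale, pass to the limit frame, Taylor in $t$) at the cost of invoking the inverse-map polynomial structure from Proposition~\ref{polynomial-XI}.
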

\begin{proof}
 Let us denote by $U$ the range of the local coordinates $(x_{1},\ldots,x_{n})$. In addition, for $t\neq 0$ and 
 $j=1,\ldots,n$, set $\hat{X}_{j}^{t}=t^{w_{j}}\delta_{t}^{*}X_{j}$. Let $x\in \R^{n}$ and $y\in \R^{n}$ be such 
 that $t\cdot y \in  U$. Then by 
 Proposition~\ref{prop:data-dependence.rescaling-frame} we have
 \begin{equation*}
   t^{-1}\cdot  \varepsilon_{t\cdot y}^{X}(t\cdot x)=  \varepsilon_{y}^{\hat{X}^{t}}(x), 
 \end{equation*}where $\varepsilon_{y}^{\hat{X}^{t}}(x)$ is the $\varepsilon$-Carnot coordinate map associated with 
 $(\hat{X}_{j}^{t},\ldots, \hat{X}_{n}^{t})$. 
 Moreover, as $(x_{1},\ldots,x_{n})$ provides us with Carnot coordinates at $a$ adapted to 
 $(X_{1},\ldots,X_{n})$, we know by Proposition~\ref{prop:Carnot-criterion} that,  for $j=1,\ldots,n$ and as $t\rightarrow 0^{+}$, we have
 \begin{equation*}
  \hat{X}_{j}^{t}= t^{w_{j}}\delta_{t}^{*}X_{j}=X_{j}^{a}+\op{O}(t) \qquad \text{in $\cX(U)$}.
 \end{equation*}
 Combining this with Corollary~\ref{prop:Carnot-prop.eps-continuity} we deduce that, as $t\rightarrow 0$, we have
 \begin{equation*}
    \varepsilon_{y}^{\hat{X}^{t}}(x)=\varepsilon_{y}^{X^{a}}(x)+ \op{O}(t) \qquad \text{in 
    $C^{\infty}(U,\R^{n})$},
 \end{equation*}where $\varepsilon_{y}^{X^{a}}$ is the $\varepsilon$-Carnot coordinate map associated with $y$ and the frame 
 $(X_{1}^{a},\ldots,X_{n}^{a})$. 
By Lemma~\ref{lem:Carnot-coord.admissibility-Xa} we know that $\exp_{X^a}$ is the identity map. Therefore, by using Proposition~\ref{prop-static} we see that $\varepsilon_{y}^{X^{a}}(x)=(-y)\cdot x$, and so for all $(x,y)\in \R^{n}\times \R^{n}$, we have
 \begin{equation*}
    t^{-1}\cdot  \varepsilon_{t\cdot y}^{X}(t\cdot x)=  \varepsilon_{y}^{\hat{X}^{t}}(x)=(-y)\cdot x + \op{O}(t) \quad \text{as $t\rightarrow 
       0^{+}$}. 
 \end{equation*}
Combining this with Lemma~\ref{lem-smooth-2} gives the asymptotics~(\ref{eq:smoothness.vareps-yx1}). 

It remains to prove that $ (\varepsilon_{y}^{X})^{-1}(x)= y\cdot x +\Ow(\|(x,y)\|^{w+1})$. Let $\Phi: \R^n\times U \rightarrow \R^n\times U $ be the smooth map defined by
\begin{equation*}
\Phi(x,y)=\left( \varepsilon_{y}^{X}(x),y\right) \qquad \text{for all $(x,y)\in \R^n\times U$}. 
\end{equation*}
 This is a diffeomorphism whose inverse map is given by
  \begin{equation}
\Phi^{-1}(x,y)= \left( \left(\varepsilon_{y}^{X}\right)^{-1}(x),y\right), \qquad (x,y)\in \R^n\times U. 
\label{eq:smoothness.Phi-inverse}
\end{equation}
Moreover, the asymptotic expansion~(\ref{eq:smoothness.vareps-yx1}) implies that, near $(x,y)=(0,0)$, we have 
 \begin{equation}
\Phi(x,y)=\left((-y)\cdot x, y\right) +\Ow\left(\|(x,y)\|^{w+1}\right). 
\label{eq:smoothness.Phi-Ow}
\end{equation}

Set $\hat{\Phi}(x,y)=((-y)\cdot x, y)$, $x,y\in \R^n$. Then $\hat{\Phi}$ is a polynomial map which is $w$-homogeneous with respect to the dilations~(\ref{eq:smoothness.dilations-2n}). Moreover, as $-y$ is the inverse of $y$ with respect to the product law of $GM(a)$, we also see that $\hat\Phi(x,y)$ is a diffeomorphism with inverse $\hat\Phi^{-1}(x,y)=(y\cdot x, y)$. Therefore, we may combine~(\ref{eq:smoothness.Phi-Ow}) and Proposition~\ref{lem:Carnot-coord.inverse-Ow} to see that, near $(x,y)=(0,0)$, we have 
\begin{equation*}
\Phi^{-1}(x,y)= \hat\Phi^{-1}(x,y) +\Ow\left(\|(x,y)\|^{w+1}\right) = (y\cdot x, y) +\Ow\left(\|(x,y)\|^{w+1}\right). 
 \end{equation*}
 Combining this with~(\ref{eq:smoothness.Phi-inverse}) then shows that $ (\varepsilon_{y}^{X})^{-1}(x)= y\cdot x +\Ow(\|(x,y)\|^{w+1})$. The proof is complete. 
\end{proof}

\begin{remark}
Proposition~\ref{prop-com} is a key ingredient in the construction of tangent groupoid of a Carnot manifold in~\cite{CP:Groupoid}. It is also an important ingredient in the construction of a full symbolic calculus for hypoelliptic pseudodifferential operators on Carnot manifolds in~\cite{CP:Carnot-calculus}. 
\end{remark}

\end{document}